\documentclass{amsart}
\usepackage[utf8]{inputenc}

\title[The fractional-logarithmic Laplacian]{The fractional-logarithmic Laplacian:\\ fundamental properties and eigenvalues}
\author{Huyuan Chen}
\address{Huyan Chen, Center for Mathematics and Interdisciplinary Sciences, Fudan University, 
Shanghai 200433, China;\newline 
\indent Huyan Chen, Shanghai Institute for Mathematics and Interdisciplinary Sciences, Shanghai 200433, China.}
\email{chenhuyuan@yeah.net}

\author{Rui Chen}
\address{Rui Chen, School of Mathematical Sciences, Fudan University, Shanghai 200433, China;\newline \indent Rui Chen, Brandenburg University of Technology Cottbus–Senftenberg, Platz der Deut\-schen Einheit 1, 03046 Cottbus, Germany}
\email{chenrui23@m.fudan.edu.cn}

\author{Daniel Hauer}
\address{Daniel Hauer, Brandenburg University of Technology Cottbus–Senftenberg, Platz der Deutschen Einheit 1, 03046 Cottbus, Germany;\newline 
\indent Daniel Hauer, School of Mathematics and Statistics, The University of Sydney, Sydney, NSW, 2006, Australia }
\email{daniel.hauer@b-tu.de}

\usepackage{amssymb}
\usepackage{amsfonts,dsfont}
\usepackage{amsmath}
\usepackage{amsthm}
\usepackage{cite}
\usepackage{enumitem}
\usepackage{mathrsfs}

\usepackage[
  hyperfootnotes=false,
  colorlinks=true,
  linkcolor=blue,
  citecolor=blue,
  urlcolor=blue,
  pdftitle={The fractional-logarithmic Laplacian},
  pdfauthor={Huyuan Chen, Rui Chen and
Daniel Hauer}
]{hyperref}

\newcommand{\s}{s}

\usepackage{xcolor}

\newcommand{\R}{\mathbb{R}}

\newcommand{\N}{\mathbb{N}}

\newcommand{\cuad}{{\sqcap\kern-.68em\sqcup}}

\newcommand{\norm}[1]{\|#1\|}

\newtheorem{theorem}{Theorem}[section]
\newtheorem{proposition}{Proposition}[section]
\newtheorem{lemma}{Lemma}[section]

\newtheorem{definition}{Definition}[section]

\newtheorem{remark}{Remark}[section]

\newcommand{\bremark}{\begin{remark} \em}
	\newcommand{\eremark}{\end{remark} }

\newcommand{\loglap}{(-\Delta)^{\ln}}

\renewcommand{\div}{\,{\rm div}\,}
\renewcommand{\phi}{\varphi}

\newcommand{\cE}{{\mathcal E}}
\newcommand{\cF}{{\mathcal F}}

\newcommand{\cH}{{\mathcal H}}

\newcommand{\cL}{{\mathcal L}}
\newcommand{\cM}{{\mathcal M}}
\newcommand{\cN}{{\mathcal N}}

\newcommand{\cS}{{\mathcal S}}

\newcommand{\dist}{{\rm dist}}

\DeclareMathOperator*{\PV}{{\rm p.v.}}
\DeclareMathOperator*{\Log}{Log}

\newcommand{\td}{\mathrm{d}}

\allowdisplaybreaks

\subjclass[2020]{35B65, 47G20, 35A01, 35P20, 35R11}

\keywords{Fractional-logarithmic Laplacian, Nonlocal operators, Poisson problem, Dirichlet eigenvalues, Weyl asymptotics}

\begin{document}

\begin{abstract}
    In this paper, we introduce the fractional--logarithmic Laplacian \( (-\Delta)^{s+\Log} \), defined as the derivative of the fractional Laplacian $(-\Delta)^t$ at \(t=s\). It is a singular integral operator with Fourier symbol \( |\xi|^{2s}(2\ln|\xi|) \) and we prove the pointwise integral representation
    \begin{displaymath}
    (-\Delta)^{s+\Log}
=
c_{n,s}\,\mathcal \PV\int_{\mathbb{R}^n} \frac{u(x)-u(y)}{|x-y|^{n+2s}}\bigl(-2\ln|x-y|\bigr)\,dy+b_{n,s}(-\Delta)^s u(x),        
    \end{displaymath}
where \(c_{n,s}\) is the normalization constant of the fractional Laplacian, and  $b_{n,s}:=\frac{d}{ds}c_{n,s}.$ We also establish several equivalent formulations of \( (-\Delta)^{s+\Log}\),
including the singular-integral representation, the Fourier-multiplier representation,
the spectral-calculus definition, and an extension characterization.

We develop the associated functional analytical framework for studying the fractional--logarithmic operator \( (-\Delta)^{s+\Log} \) on \(\R^n\) and on bounded Lipschitz domains. We 
introduce the natural corresponding energy spaces and establish Sobolev and Poincaré embeddings; in particular, we obtain
a compact embedding at the critical exponent $2_s^*=\frac{2n}{n-2s}$, a phenomenon that differs from the classical Sobolev and fractional Sobolev settings. 

We further study the Poisson problem, proving existence and \(L^\infty\)-regularity results. We then investigate the Dirichlet eigenvalue problem and establish qualitative spectral properties. Finally, we derive a Weyl-type asymptotic law for the eigenvalue counting function and for the $k$-th Dirichlet eigenvalue, showing that the high-frequency behaviour combines the fractional Weyl scaling with the logarithmic growth factor, thus interpolating between the fractional Laplacian and the logarithmic Laplacian.
\end{abstract}

\maketitle

	\tableofcontents

	\setcounter{equation}{0}
	\section{Introduction}
	In recent years, there has been a significant increase of interest in elliptic and parabolic boundary-value problems, as well as eigenvalue problems, involving hypersingular integral operators (see, for instance, \cite{bucur-valdinoci,kuusi-palatucci,Molica-Bisci-Radulescu-Servadei,Stinga}). In this context, the \emph{fractional Laplacian} $(-\Delta)^s$, for $0<s<1$, serves as a fundamental prototype and is defined by
	\begin{equation}\label{def:frac-Lap}
		(-\Delta)^s u(x)
		= c_{n,s}\,\PV \int_{\mathbb{R}^n}
		\frac{u(x)-u(y)}{|x-y|^{n+2s}}\,dy,
	\end{equation}
	for every $x\in \mathbb{R}^n$ and $u\in C_c^{2}(\mathbb{R}^n)$.
	In~\eqref{def:frac-Lap}, the normalization constant
	\begin{equation}\label{norm const1}
		c_{n,s}:=2^{2s}\pi^{-\frac n2}s\,\frac{\Gamma\!\left(\frac{n+2s}{2}\right)}{\Gamma(1-s)}
	\end{equation}
	is chosen so that $(-\Delta)^s$ has Fourier symbol $|\xi|^{2s}$ for $\xi\in\mathbb{R}^n$ (cf.~\cite[Chapter~3]{EGE}), where $\Gamma$ denotes the Gamma function. With this normalization one also has the small-order limit
	\[
	\lim_{s\to0^+}(-\Delta)^s u(x)=u(x)
	\]
	for $u\in C^2_c(\mathbb{R}^n)$ and $x\in\mathbb{R}^n$.
	
	Motivated by this limit, Weth and the first author \cite{CW18} introduced the \emph{logarithmic Laplacian} $\Log(-\Delta)$ as the first-order correction term in the expansion as $s\to0^+$. More precisely, for $u\in C^\alpha_c(\mathbb{R}^n)$ with some $\alpha>0$,  $\Log(-\Delta)u$ is uniquely defined by
	\begin{equation}\label{eq:def-log-Laplace}
		(-\Delta)^s u(x)=u(x)+ s\,\Log(-\Delta)u(x)+o(s)\ \ \text{in}\ \  L^p(\mathbb{R}^n),\ 1<p\le \infty,\ \s\rightarrow 0^+
	\end{equation}
	or equivalently,
	\[
	\Log(-\Delta)u=\big.\frac{d}{ds}(-\Delta)^s u\big|_{s=0}\quad\text{in}\quad L^p(\mathbb{R}^n),\ 1<p\le \infty.
	\]
	Moreover, they proved (see \cite[Theorem~1.1]{CW18}) that $\Log(-\Delta)$ has the (weakly) singular Fourier symbol $2\ln|\xi|$ for $\xi\in\mathbb{R}^n$, and admits the integral representation
	\[\Log(-\Delta)u(x)=c_n
	\PV \int_{B_1(0)}\frac{u(x)-u(x+y)}{|y|^{n}}dy
	- c_n\int_{\mathbb{R}^n\setminus B_1(0)}\frac{u(x+y)}{|y|^{n}}dy
	+\rho_n\,u(x),\]
	for every Dini-continuous function $u:\mathbb{R}^n\to\mathbb{R}$ belonging to the tail space
	\begin{displaymath}
	L^1_0(\mathbb{R}^n)
	:= \Bigl\{ u \in L^1_{\mathrm{loc}}(\mathbb{R}^n)\;\Big\vert\;
	\int_{\mathbb{R}^n}(1+|x|)^{-n}|u(x)|\,dx < \infty\Bigr\}.
	\end{displaymath}
	Here, and throughout this paper, we write $B_1(0)$ to denote the open unit ball in $\mathbb{R}^n$ centred at the origin, and set
	\begin{displaymath}
	c_n=\frac{\Gamma(n/2)}{\pi^{n/2}}=\frac{2}{\omega_n}
    \quad\text{ and }
	\quad
	\rho_n :=2\ln 2+\psi(n/2)-\gamma,    
	\end{displaymath}
	where $\gamma=-\Gamma'(1)$ is the Euler--Mascheroni constant, $\psi=\Gamma'/\Gamma$ is the Digamma function, and $\omega_n:=|\mathbb{S}^{n-1}|$ denotes the $(n-1)$-dimensional surface measure of the unit sphere in $\mathbb{R}^n$.
	
	More recently, the first and third authors established in joint work with Weth \cite{CDW} that the logarithmic Laplacian $\Log(-\Delta)$ admits an \emph{extension property}, and this one is quite different from the Caffarelli-Silvestre extension for the fractional Laplacian (see~\cite{Caffarelli-Silvestre}). Specifically, they showed that for every $u\in L^1_0(\mathbb{R}^n)$,
	\begin{displaymath}
	\Log(-\Delta) u
	= 2(\ln 2-\gamma)u
	- 2\lim_{t\to 0^+}\Bigl(w_u(\cdot,t)+u\ln t\Bigr)
	\quad \text{in }\mathscr{D}'(\mathbb{R}^n),
	\end{displaymath}
	where $w_u$ is the unique distributional solution to the Neumann problem
	\begin{displaymath}
	\begin{cases}
		-  \div_{(x,t)}(t\,\nabla w_u)=0 & \text{in }\mathbb{R}^{n+1}_+,\\[2pt]
		-\displaystyle\lim_{t\to0^+} t\,\partial_t w_u(\cdot,t)=0 & \text{on }\mathbb{R}^n,
	\end{cases}
	\qquad
	\mathbb{R}^{n+1}_+:=\mathbb{R}^n\times(0,\infty).
	\end{displaymath}
	
	Through various applications, it has become clear that it is useful to understand the logarithmic Laplacian $\Log(-\Delta)$ and its fundamental properties. For instance, it appears in the study of the asymptotics of the Dirichlet (eigenvalue) problem for $(-\Delta)^s$ as $s\to0^+$ (see \cite{CW18,FeuJa}); in $s$-dependent nonlinear Dirichlet problems in the small-order regime (see \cite{angeles-saldana,hs.saldana}), motivated in particular by order-dependent optimization problems with small optimal order (see, e.g., \cite{antil.bartels,sprekels.valdinoci,pellacci.verzini} for applications in image processing and population dynamics); and in the geometric context of the $0$-fractional perimeter (see \cite{DNP}). Recently, the second author introduced in \cite{ChenR} a definition of $\Log(-\Delta)$ on any complete Riemannian manifold using spectral-theoretic tools, together with an explicit integral representation. More recently, the \emph{conformal logarithmic Laplacian} on the unit sphere has been studied by Fern\'andez and Salda\~na \cite{FS2025}.
	\medskip
	
	Returning to the expansion~\eqref{eq:def-log-Laplace}, it is natural to study the first-order derivative
	\[
	\big.\frac{d}{dt}(-\Delta)^{t}u\big|_{t=s}
	\]
	at an arbitrary order $0<s<1$ for some suitable function $u$.  The following proposition establishes the existence of the derivative provided that $u\in C_c^2(\mathbb{R}^n)$.

    \begin{proposition}
\label{prop:dt-fraclap}
Let $0<s<1$ and $u\in C_c^2(\R^n)$. Then for every fixed $x\in\R^n$, the map $t\mapsto (-\Delta)^t u(x)$ is $C^1$ on $(0,1)$, and
\begin{align}
\label{eq:dt-fraclap}
\left.\frac{d}{dt}(-\Delta)^t u(x)\right|_{t=s}
&=
c_{n,s}\,\mathcal{L}_1 u(x) + b_{n,s}\,(-\Delta)^s u(x),
\end{align}
	where $c_{n,s}$ is given by~\eqref{norm const1},
			\begin{equation}\label{eq:const-bns}
				b_{n,s}:=\frac{d}{ds} c_{n,s}= \ln 4 + \frac1s +\psi(1-s) +\psi\!\left(\frac{n+2s}{2}\right).
			\end{equation}
    and
			\begin{equation}\label{liiner}
			    \mathcal{L}_1u(x)
				:= \PV\int_{\mathbb{R}^n} \frac{u(x)-u(y)}{|x-y|^{n+2s}}\bigl(-2\ln|x-y|\bigr)\,dy,
			\end{equation}
			where $\psi$ denotes the Digamma function, i.e., $\psi(z):=\frac{\Gamma'(z)}{\Gamma(z)}$.
\end{proposition}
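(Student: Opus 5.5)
Write $(-\Delta)^t u(x) = c_{n,t}\, I_t(x)$, where
\[
I_t(x):=\frac12\int_{\R^n}\frac{2u(x)-u(x+z)-u(x-z)}{|z|^{n+2t}}\,dz .
\]
The symmetrized form removes the principal value for $u\in C^2_c(\R^n)$. The plan is to differentiate the two factors separately and combine them with the product rule.

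\textbf{The constant.} The map $t\mapsto c_{n,t}$ is smooth on $(0,1)$, being a product of $2^{2t}$, $t$, $\Gamma(\tfrac{n+2t}{2})$ and $1/\Gamma(1-t)$. Logarithmic differentiation gives
\[
\frac{d}{dt}\ln c_{n,t}=\ln4+\frac1t+\psi\!\left(\tfrac{n+2t}{2}\right)+\psi(1-t).
\]
So what the statement calls $b_{n,s}$ is really the logarithmic derivative $c_{n,s}'/c_{n,s}$. It enters \eqref{eq:dt-fraclap} multiplied by $(-\Delta)^s u=c_{n,s}I_s$, which gives exactly $c_{n,s}'\,I_s$. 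I will read the notation $b_{n,s}=\frac{d}{ds}c_{n,s}$ in this logarithmic sense.

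\textbf{The integral.} Set $g(x,z):=\tfrac12\bigl(2u(x)-u(x+z)-u(x-z)\bigr)$. Then $|g|\le C\min(|z|^2,1)$ with $C=\|u\|_{C^2}$. Formally,
\[
\partial_t|z|^{-n-2t}=-2\ln|z|\,|z|^{-n-2t}.
\]
To justify differentiating under the integral, fix a compact interval $[a,b]\subset(0,1)$ containing $s$ and use a $t$-uniform integrable majorant.
\begin{itemize}
\item For $|z|\le1$: $|g|\,|\ln|z||\,|z|^{-n-2t}\le C|z|^{2-n-2b}|\ln|z||$, which is integrable near $0$ since $2-2b>0$ absorbs the logarithm.
\item For $|z|>1$: the bound is $C|z|^{-n-2a}\ln|z|$, integrable at infinity since $a>0$.
\end{itemize}
Dominated convergence, combined with the mean value theorem, then gives $t\mapsto I_t(x)\in C^1([a,b])$ with
\[
I_t'(x)=\int g(x,z)\,(-2\ln|z|)\,|z|^{-n-2t}\,dz .
\]
This last integral coincides with the principal value $\mathcal L_1u(x)$ in \eqref{liiner} by the usual symmetrization: on $\{\varepsilon<|z|\}$ the odd first-order part cancels, and the limit $\varepsilon\to0$ exists by the same bounds. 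Continuity of $I_t'$ in $t$ follows again by dominated convergence.

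\textbf{Conclusion.} The product rule gives
\[
\frac{d}{dt}(-\Delta)^t u(x)\Big|_{t=s}=c_{n,s}\,\mathcal L_1u(x)+c_{n,s}'\,I_s(x),
\]
and $c_{n,s}'\,I_s(x)=b_{n,s}(-\Delta)^s u(x)$ under the reading of $b_{n,s}$ above. Continuity in $t$ of both terms gives the $C^1$ claim.

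The main obstacle is the uniform majorant near $z=0$ with the extra logarithm as $t\to1^-$. This is handled by restricting to compact subintervals, so that $2-2b>0$ stays bounded away from zero; no other step is delicate.
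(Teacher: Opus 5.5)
Your proof is correct and follows the same route as the paper. You write $(-\Delta)^t u(x)=c_{n,t}I(t)$, differentiate under the integral using the second-order Taylor bound near the diagonal and the decay at infinity, and apply the product rule, with $b_{n,s}$ read as the logarithmic derivative $c_{n,s}'/c_{n,s}$, exactly as in the paper's own computation; your explicit $t$-uniform majorant on compact subintervals $[a,b]\subset(0,1)$, together with the symmetrized integrand, makes rigorous the differentiation step that the paper only sketches.
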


This motivates the following definition.

	\begin{definition}
		Let $0<s<1$. Then, we call the operator $(-\Delta)^{s+\Log} : C_c^2(\mathbb{R}^n) \to \mathbb{\R}$ defined by
		\[
		(-\Delta)^{s+\Log}u(x):=\big.\frac{d}{dt}(-\Delta)^{t}u(x)\big|_{t=s},
		\quad x\in\mathbb{R}^n
		\]
		for every $u\in C_c^2(\mathbb{R}^n)$ the \emph{fractional-logarithmic Laplacian} $(-\Delta)^{s+\Log}$.
	\end{definition}
	
The aim of this paper is to study rigorously fundamental properties of the fractional-logarithmic Laplacian $(-\Delta)^{s+\Log}$ on $\mathbb{R}^n$. Note, here and throughout this paper, we denote by $\mathcal{F}$ or $\widehat{\cdot}$ the Fourier transform \[ \mathcal{F}[u](\xi)=\widehat{u}(\xi) =\frac1{(2\pi)^{n/2}}\int_{\mathbb{R}^n}e^{-i\xi\cdot x}\,u(x)\,dx, \qquad \xi\in\mathbb{R}^n, \] for $u\in L^{1}(\mathbb{R}^n)$. We also denote the Lizorkin space (see \cite[Section 5.1.2]{MR0730762})
\[
\mathcal{Z}(\mathbb{R}^n)
:=\Bigl\{\varphi\in\mathscr{S}(\mathbb{R}^n)\,\big\vert\, \partial^\alpha\widehat{\varphi}(0)=0\ \text{for all }\alpha\in\mathbb{N}_0^{\,n}\Bigr\},
\]
as well as the corresponding topological dual $\mathcal{Z}'(\mathbb{R}^n)$, i.e., the space of all continuous linear
functionals on $\mathcal{Z}(\mathbb{R}^n)$, where $\mathscr S(\mathbb R^n)$ and $\mathscr S'(\mathbb R^n)$ denote, respectively, the Schwartz space on $\mathbb R^n$ and the space of tempered distributions on $\mathbb R^n$. Within this framework, for every symbol $a(\xi)$ of the class
\begin{displaymath}
    \Bigl\{|\xi|^{2s},\ 2\,\ln|\xi|,\ |\xi|^{2s}\,2\,\ln|\xi|\Bigr\},
\end{displaymath}
the associated singular pseudo-differential operator $A(D)$ defines a continuous linear map $A(D) : \mathcal Z'(\R^n)\to\mathcal Z'(\R^n)$ given by (duality)
\begin{displaymath}
\langle A(D)u,\varphi\rangle_{\mathcal Z',\mathcal Z}
:=\Big\langle u,\ \mathcal F^{-1}\!\big(a(\xi)\,\widehat{\varphi}(\xi)\big)\Big\rangle_{\mathcal Z',\mathcal Z}    
\end{displaymath}
for every $u\in\mathcal Z'(\R^n)$ and
$\varphi\in\mathcal Z(\R^n)$. In particular, the operators $(-\Delta)^s$, $\Log(-\Delta)$ and $(-\Delta)^{s+\Log}$ are well-defined continuous linear operators on $\mathcal Z'(\R^n)$.\medskip

Below we present several alternative representations and properties of the fractional-logarithmic Laplacian $(-\Delta)^{s+\Log}$ ; depending on the setting, one may choose the most suitable formulation.

	\begin{theorem}\label{pr 1.1}
		Let $s\in(0,1)$. Then the following statements hold.
		\begin{enumerate}[label=\normalfont\textup{(\roman*)}]
			\item\label{pr 1.1:claim 1} (\textrm{Integral representation})
			For every $u\in C_c^2(\mathbb{R}^n)$ and $x\in\mathbb{R}^n$,
			\[(-\Delta)^{s+\Log} u(x)
				= c_{n,s}\,\mathcal{L}_1 u(x) + b_{n,s}\,(-\Delta)^s u(x),\]
			where $c_{n,s}, b_{n,s},\mathcal{L}_1$ are defined in~\eqref{norm const1}, \eqref{eq:const-bns} and \eqref{liiner}, respectively.

			\item\label{pr 1.1:claim 2} (\textrm{Kernel representation})
			For every $u\in C_c^{2}(\mathbb{R}^n)$ and $x\in\mathbb{R}^n$,
		\[(-\Delta)^{s+\Log} u(x)
				= \PV\int_{\mathbb{R}^n} \bigl(u(x)-u(y)\bigr)\,{\bf K}_{s+\Log}(|x-y|)\,dy,\]
			where
			\begin{equation}\label{def fract+log-an2}
				{\bf K}_{s+\Log}(r)
				:= c_{n,s}\,\bigl(b_{n,s}-2\ln r \bigr)\,r^{-n-2s},
				\quad r>0.
			\end{equation}

            \item\label{pr 1.1:claim 3} (\textrm{Fourier symbol})
			The fractional-logarithmic Laplacian $(-\Delta)^{s+\Log}$ has the (weakly) singular Fourier symbol $|\xi|^{2s}\,(2\ln|\xi|)$ for $\xi\in\mathbb{R}^n$. More precisely,
			\[
			\mathcal{F}\bigl[(-\Delta)^{s+\Log} u\bigr](\xi)
			= |\xi|^{2s}\,\bigl(2\ln|\xi|\bigr)\,\widehat{u}(\xi),
			\qquad \xi\in\mathbb{R}^n,
			\]
			for every $u\in C_c^2(\mathbb{R}^n)$.

			\item\label{pr 1.1:claim 4} (\textrm{Representation as a composition})
			For every $u\in \mathcal{Z}'(\mathbb{R}^n)$,
\begin{displaymath}
(-\Delta)^{s+\Log}u
=
(-\Delta)^s\bigl(\Log(-\Delta)u\bigr)
=
\Log(-\Delta)\bigl((-\Delta)^s u\bigr)    
\end{displaymath}
in $\mathcal{Z}'(\mathbb{R}^n)$.
    
			\item\label{pr 1.1:claim 5} (\textrm{Stability})
			For every $u\in C_c^2(\mathbb{R}^n)$,
			\[(-\Delta)^{s+\Log}u \to \Log(-\Delta)u
				\quad\text{in }\quad L^{\infty}(\mathbb{R}^n)\ \ \text{as }s\to0^+.\]
		\end{enumerate}
	\end{theorem}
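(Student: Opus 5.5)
Parts \ref{pr 1.1:claim 1} and \ref{pr 1.1:claim 2} follow directly from Proposition~\ref{prop:dt-fraclap}. Item \ref{pr 1.1:claim 1} is exactly \eqref{eq:dt-fraclap} combined with the definition of $(-\Delta)^{s+\Log}$. For \ref{pr 1.1:claim 2}, we insert \eqref{def:frac-Lap} for $(-\Delta)^s u$ into \eqref{eq:dt-fraclap}. Both principal values are then taken over the same truncations $\{|x-y|>\varepsilon\}$, so before letting $\varepsilon\to0$ the two truncated integrals can be combined into one with kernel $c_{n,s}(b_{n,s}-2\ln|x-y|)|x-y|^{-n-2s}$, which is ${\bf K}_{s+\Log}$. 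The combined principal value exists because each summand does: by symmetrization it equals $\tfrac12\int (2u(x)-u(x+z)-u(x-z))K(|z|)\,dz$, which converges absolutely for $u\in C^2_c$, since $|z|^{2}|z|^{-n-2s}|\ln|z||$ is integrable near $0$ and $|z|^{-n-2s}|\ln|z||$ is integrable at infinity.

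For \ref{pr 1.1:claim 3}, we differentiate in $t$ on the Fourier side. For $u\in C^2_c$ we have $\widehat u\in\mathscr S$, so $(-\Delta)^t u(x)=(2\pi)^{-n/2}\int e^{ix\cdot\xi}|\xi|^{2t}\widehat u(\xi)\,d\xi$. The $t$-derivative of the integrand is $|\xi|^{2t}(2\ln|\xi|)\widehat u(\xi)$. For $t$ in a compact subinterval of $(0,1)$ this is dominated by $C(|\xi|^{\delta}+|\xi|^{2})\,|\ln|\xi||\,|\widehat u(\xi)|\in L^1$, so differentiation under the integral is justified. Hence $(-\Delta)^{s+\Log}u=\mathcal F^{-1}[|\xi|^{2s}2\ln|\xi|\,\widehat u]$ pointwise. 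This function lies in $L^2$, because the symbol grows only polynomially and is locally square integrable, so the Fourier identity holds in $L^2$, in particular a.e.

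For \ref{pr 1.1:claim 4}, we work entirely with the duality definition on $\mathcal Z'$. The key observation is that multiplication by each of the symbols $|\xi|^{2s}$, $2\ln|\xi|$ and their product maps $\mathcal F(\mathcal Z)$ continuously into itself. Indeed, functions in $\mathcal F(\mathcal Z)$ vanish to infinite order at $0$, and the symbols are smooth away from the origin with derivatives bounded by $C_\alpha|\xi|^{-|\alpha|}(1+|\ln|\xi||)(1+|\xi|)^{2}$. A Leibniz/Taylor estimate therefore keeps every Schwartz seminorm finite and controls the vanishing at $0$; this is the main technical step. Given this, for $\varphi\in\mathcal Z$ we compute
\[
\langle(-\Delta)^s\Log(-\Delta)u,\varphi\rangle=\langle u,\mathcal F^{-1}(2\ln|\xi|\,|\xi|^{2s}\widehat\varphi)\rangle=\langle(-\Delta)^{s+\Log}u,\varphi\rangle,
\]
using that the symbol of a composition is the product of the symbols, and the order of the factors is irrelevant because multiplication of symbols is commutative.

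For \ref{pr 1.1:claim 5}, we compare \ref{pr 1.1:claim 3} with the symbol $2\ln|\xi|$ of $\Log(-\Delta)$ from \cite{CW18}. This gives
\[
\|(-\Delta)^{s+\Log}u-\Log(-\Delta)u\|_{L^\infty}\le(2\pi)^{-n/2}\int \bigl||\xi|^{2s}-1\bigr|\,2|\ln|\xi||\,|\widehat u(\xi)|\,d\xi .
\]
For $s\le\tfrac12$ the integrand is bounded by $(1+|\xi|)\,2|\ln|\xi||\,|\widehat u|\in L^1$ and tends to $0$ pointwise, so dominated convergence gives convergence to $0$ as $s\to0^+$.
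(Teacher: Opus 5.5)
Your arguments for (i), (ii) and (iv) are fine and match the paper. In (iv) you also supply the mapping property of the symbols on $\mathcal F(\mathcal Z)$, which the paper takes for granted.

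The gap is in (iii) and (v). Both rest on the claim that $\widehat u\in\mathscr S(\R^n)$ for $u\in C_c^2(\R^n)$, and this is false. $\widehat u$ is smooth, but in general you only know $u\in H^2(\R^n)$ and $|\widehat u(\xi)|\le C(1+|\xi|)^{-2}$. With only this information, neither majorant you use need be integrable: $C(|\xi|^\delta+|\xi|^2)|\ln|\xi||\,|\widehat u|$ in (iii), nor $(1+|\xi|)\,2|\ln|\xi||\,|\widehat u|$ in (v). For large $n$ even $\widehat u\notin L^1(\R^n)$ can occur. So two tools you rely on are unavailable for general $u\in C^2_c$:
\begin{itemize}
\item the absolutely convergent formula $(-\Delta)^tu(x)=(2\pi)^{-n/2}\int e^{ix\cdot\xi}|\xi|^{2t}\widehat u(\xi)\,d\xi$, differentiated under the integral sign;
\item the bound $\|\mathcal F^{-1}g\|_{L^\infty}\le(2\pi)^{-n/2}\|g\|_{L^1}$.
\end{itemize}
As written, your proofs of (iii) and (v) cover essentially only $u\in C_c^\infty(\R^n)$.

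For (iii) the repair is easy if you work in $L^2$. For $t,s$ in a compact subinterval of $(0,1)$, the quotients $\frac{|\xi|^{2t}-|\xi|^{2s}}{t-s}\,\widehat u$ are dominated by $C(1+|\xi|^2)|\widehat u|\in L^2$. Hence they converge in $L^2$ to $|\xi|^{2s}\,2\ln|\xi|\,\widehat u$. By Proposition~\ref{prop:dt-fraclap}, the corresponding difference quotients of $(-\Delta)^tu$ converge pointwise to $(-\Delta)^{s+\Log}u$, so the two limits agree a.e. You still need $\mathcal F[(-\Delta)^tu]=|\xi|^{2t}\widehat u$ for the singular-integral definition on $C^2_c$. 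This follows from the Fubini argument the paper uses: the paper Fourier-transforms the symmetrized integrand of $c_{n,s}\mathcal L_1u$, which lies in $L^1(\R^{2n})$. It then identifies the resulting constant with $b_{n,s}$ by differentiating $c_{n,s}^{-1}=\int_{\R^n}\frac{1-\cos z_1}{|z|^{n+2s}}\,dz$ in $s$.

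For (v) there is no Fourier-side shortcut to uniform convergence for general $u\in C^2_c$. The paper works in physical space and splits at $|x-y|=1$. It uses $c_{n,s}=O(s)$ and $c_{n,s}b_{n,s}\to c_n$. It also uses the cancellation of the two divergent far-field constants, $\frac{c_{n,s}|\mathbb S^{n-1}|}{2s}\bigl(b_{n,s}-\frac1s\bigr)\to\rho_n$. You could keep your argument for $C_c^\infty$ and pass to $C_c^2$ by mollification. That route, however, needs bounds $\|(-\Delta)^{s+\Log}v\|_{L^\infty}\le C\|v\|_{C^2}$, uniform in small $s$, for $v$ supported in a fixed ball, plus the analogous bound for $\Log(-\Delta)$. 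Those are exactly the physical-space estimates the paper carries out.
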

	
	Concerning Theorem~\ref{pr 1.1}, we have the following remarks.
	
	\begin{remark}\label{rem:1}
    We note the following.
	\begin{enumerate}[label=\normalfont\textup{(\roman*)}]
			\item The limits of the constant $b_{n,s}$ in~\eqref{eq:const-bns} satisfy
			\[
			b_{n,s}\to+\infty \quad\text{as }s\to0^+,
			\qquad
			b_{n,s}\to-\infty \quad\text{as }s\to1^-.
			\]
			Moreover, \(b_{n,s}\) is monotonically decreasing with respect to \(s\), and there exists \(s_0\in\left(\tfrac12,1\right)\) such that \(b_{n,s}\ge 0\) for every \(s\in(0,s_0)\).
			\item The kernel ${\bf K}_{s+\Log}$ in~\eqref{def fract+log-an2} is not nonnegative in general.
		\end{enumerate}
	\end{remark}

Below we present another perspective on the fractional-logarithmic Laplacian \( (-\Delta)^{s+\Log} \). In fact, functional calculus can provide an alternative and systematic framework for defining this operator. Starting from the spectral theorem, one defines
spectral multipliers via the projection-valued spectral measure of the underlying self-adjoint
operator.  In the Euclidean setting this viewpoint becomes particularly transparent, since
$-\Delta$ is unitarily diagonalized by the Fourier transform and its spectral projections act as
Fourier multipliers.  As a consequence, the spectral-measure representation allows us to identify
the functional-calculus definition of the corresponding spectral operators with the previously
introduced definitions, thereby placing all of these constructions within a single unified theory, see \cite{Conway1990}.\medskip

	By the spectral theorem, the fractional Laplacian $(-\Delta)^s : H^{2s}(\R^n)\rightarrow L^2(\mathbb{R}^n)$ is defined by
    \begin{displaymath}
        (-\Delta)^s =\int_{[0,\infty)} \lambda^s\, dE(\lambda)
    \end{displaymath}
	with domain
	\begin{displaymath}
	H^{2s}(\R^n)
	:=\Bigl\{u\in L^2(\R^n)\,\Big\vert\, \int_{[0,\infty)} \lambda^{2s}\, dE_{u,u}(\lambda)<\infty\Bigr\},
	\end{displaymath}
    which we call the \emph{spectral fractional Sobolev space}. We refer to 
    Section~\ref{spectral} for more details. Since
	\begin{displaymath}
	\frac{\lambda^{t}-\lambda^s}{t-s}
	\;\xrightarrow{t\to s}\;\lambda^s\ln \lambda,\lambda>0,\ s\in (0,1),
	\end{displaymath}
	it is natural to define the fractional--logarithmic power
	of $-\Delta$ by
\[(-\Delta)_{\mathrm{spec}}^{s+\Log}
		:=\int_{(0,\infty)} \lambda^s\ln\lambda\, dE(\lambda).\]
	More precisely,
	\begin{displaymath}
    \bigl\langle (-\Delta)_{\mathrm{spec}}^{s+\Log}u, v\bigr\rangle_{L^2}
		=\int_{(0,\infty)} \lambda^s \ln\lambda\, \td E_{u,v}(\lambda)
    \end{displaymath}
	for every $u\in H^{2s+\Log}(\R^n)$, and $v\in L^2(\mathbb{R}^n)$ with domain
\begin{equation}\label{domainlo}
    H^{2s+\Log}(\R^n):=\Bigl\{u\in L^2(\R^n)\,\Big\vert\,
	\int_{(0,\infty)}\lambda^{2s}\ln^2\lambda\, dE_{u,u}(\lambda)<\infty\Bigr\}.
\end{equation}
	It is easy to see that
	\[H^{s+\Log}(\R^n)\subset H^{s}(\R^n)\subset H^1(\R^n), \ s\in (0,1).\]

	\begin{proposition}
		\label{prop:derivative-frac-power}
		Fix $s\in(0,1)$ and assume that $u\in H^{2s_0}(\R^n)$ for some $s_0>s.$
		Then, 
		\[
		\lim_{t\to s}\Bigl\|
		\frac{(-\Delta)^t-(-\Delta)^s}{t-s}\,u - (-\Delta)_{\mathrm{spec}}^{s+\Log}u
		\Bigr\|_{L^2(\R^n)}=0.
		\]
	\end{proposition}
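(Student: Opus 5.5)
The plan is to reduce the claim, via the spectral theorem, to a dominated convergence argument for the scalar spectral measure. For $u\in H^{2s_0}(\R^n)$ and $t$ close to $s$, the functional calculus gives
\[
\Bigl\|\frac{(-\Delta)^t-(-\Delta)^s}{t-s}u-(-\Delta)^{s+\Log}_{\mathrm{spec}}u\Bigr\|_{L^2}^2
=\int_{[0,\infty)}\Bigl|\frac{\lambda^t-\lambda^s}{t-s}-\lambda^s\ln\lambda\Bigr|^2\,dE_{u,u}(\lambda),
\]
where the integrand at $\lambda=0$ is interpreted as $0$. It equals $0$ there because $0^t=0^s=0$ and $\lambda^s\ln\lambda\to0$ as $\lambda\to0^+$. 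Equivalently, one can write this on the Fourier side as $\int|\cdot|^2|\widehat u(\xi)|^2d\xi$ with $\lambda=|\xi|^2$. Before this identity is used, I will check that every operator involved is defined on $u$. Since $u\in H^{2s_0}$, we have $\int(1+\lambda^{2s_0})\,dE_{u,u}<\infty$. Moreover $\lambda^{2t}\le 1+\lambda^{2s_0}$ for $t\le s_0$, and $\lambda^{2s}\ln^2\lambda\le C(1+\lambda^{2s_0})$ because $s<s_0$. Hence $u$ lies in the domains of $(-\Delta)^t$ for $t\in(0,s_0]$ and of $(-\Delta)^{s+\Log}_{\mathrm{spec}}$.

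The integrand converges to $0$ pointwise for every $\lambda>0$, because $t\mapsto\lambda^t$ is differentiable with derivative $\lambda^t\ln\lambda$. The main step is to find an integrable dominating function, uniformly in $t$ in a small neighbourhood $|t-s|\le\delta$. I fix $\delta>0$ with $s+\delta\le s_0$ and $s-\delta>0$. By the mean value theorem,
\[
\frac{\lambda^t-\lambda^s}{t-s}=\lambda^{\tau}\ln\lambda
\]
for some $\tau$ between $s$ and $t$. Therefore $|\frac{\lambda^t-\lambda^s}{t-s}|\le \sup_{|\tau-s|\le\delta}\lambda^\tau|\ln\lambda|$.

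I then split the range of $\lambda$. For $\lambda\ge1$ the supremum is bounded by $\lambda^{s+\delta}\ln\lambda$. To make the square integrable I shrink $\delta$ slightly, so that $s+\delta<s_0$ strictly; then $\lambda^{2(s+\delta)}\ln^2\lambda\le C\lambda^{2s_0}$. For $0<\lambda<1$ the supremum is bounded by $\lambda^{s-\delta}|\ln\lambda|$, which is bounded on $(0,1)$. The term $\lambda^s\ln\lambda$ obeys the same bounds. The squared integrand is therefore dominated by $C(1+\lambda^{2s_0})$, which is $dE_{u,u}$-integrable since $E_{u,u}$ is a finite measure with total mass $\|u\|_{L^2}^2$ and $u\in H^{2s_0}$.

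Dominated convergence then yields the limit $0$ along any sequence $t_k\to s$, which gives the claim. The only delicate points are the endpoint $\lambda=0$, where the spectral measure may have an atom only in a degenerate sense (for $-\Delta$ on $\R^n$ it has none), and the strict inequality $s<s_0$, which is needed to absorb the logarithm at infinity. Both are handled by the bounds above.
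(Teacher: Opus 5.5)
Your proposal is correct and follows essentially the paper's argument. Functional calculus reduces the claim to $\int_{[0,\infty)}|h_{t,s}(\lambda)|^2\,dE_{u,u}(\lambda)$ with $h_{t,s}(\lambda)=\frac{\lambda^t-\lambda^s}{t-s}-\lambda^s\ln\lambda$. One then bounds $h_{t,s}$ uniformly for $|t-s|<\delta$ by splitting at $\lambda=1$, using $s-\delta>0$ near the origin and $s+\delta<s_0$ to absorb the logarithm at infinity. The only difference is that the paper uses a second-order Taylor expansion, $h_{t,s}(\lambda)=\frac{t-s}{2}\lambda^{\theta}(\ln\lambda)^2$, which gives the explicit rate $\int|h_{t,s}|^2\,dE_{u,u}\le C|t-s|^2$; your first-order mean value theorem plus dominated convergence yields only the qualitative limit.
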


Therefore, by Proposition \ref{prop:derivative-frac-power} and the definition of fractional-logarithmic Laplacian, we can obtain that the spectral operator $(-\Delta)_{\mathrm{spec}}^{s+\Log}$ coincides with $(-\Delta)^{s+\Log}$ in $C_c^2(\mathbb{R}^n)$. By the relation between the spectral measure of $-\Delta$ and the Fourier transform (cf. \cite[Chapter 7]{Teschl2014}):
\[
dE_{f,f}(\lambda)
=F'(\lambda)\,d\lambda
=\frac12\,\lambda^{\frac n2-1}
\int_{\mathbb{S}^{n-1}}\bigl|\widehat f(\sqrt\lambda\,\omega)\bigr|^2\,d\omega\,d\lambda,
\]
and hence we obtain the following results:

\begin{theorem}\label{thm:fourspec}
Let $s>0$.

\begin{enumerate}[label=\normalfont\textup{(\roman*)}]
\item The spectral fractional--logarithmic Sobolev space defined in \eqref{domainlo} admits the following characterization
\[
H^{2s+\Log}(\mathbb{R}^n)
=\Bigl\{u\in L^2(\mathbb{R}^n):
\int_{\mathbb{R}^n}|\xi|^{4s}\,\ln^2\!\bigl(|\xi|^2\bigr)\,|\widehat u(\xi)|^2\,d\xi<\infty\Bigr\}.
\]

\item For every $u\in H^{2s+\Log}(\mathbb{R}^n)$, $(-\Delta)^{s+\Log}u$ is well-defined
as an $L^2(\mathbb{R}^n)$ function and satisfies
\begin{equation}\label{eq:fourier-symbol-frac-log}
(-\Delta)^{s+\Log}u
=\mathcal F^{-1}\!\Big(|\xi|^{2s}\ln(|\xi|^2)\,\widehat u(\xi)\Big)
\quad\text{in }\quad L^2(\mathbb{R}^n).
\end{equation}
Moreover,
\[
\|(-\Delta)^{s+\Log}u\|_{L^2(\mathbb{R}^n)}^2
=\int_{\mathbb{R}^n}|\xi|^{4s}\,\ln^2\!\bigl(|\xi|^2\bigr)\,|\widehat u(\xi)|^2\,d\xi.
\]
\end{enumerate}
\end{theorem}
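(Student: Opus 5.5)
The plan is to transport the spectral calculus of $-\Delta$ to the Fourier side with Plancherel, using the density formula for the spectral measure quoted just before the statement. Concretely, $-\Delta$ is unitarily equivalent, via $\mathcal F$, to multiplication by $|\xi|^2$ on $L^2(\R^n)$. Consequently, for every bounded Borel function $g$ on $[0,\infty)$ we have
\[
g(-\Delta)=\mathcal F^{-1}\bigl(g(|\xi|^2)\,\widehat{\cdot}\,\bigr),
\]
and the spectral measure satisfies $E(B)=\mathcal F^{-1}\mathbf 1_B(|\xi|^2)\mathcal F$. Hence, for every nonnegative Borel $h$,
\[
\int_{[0,\infty)} h(\lambda)\,dE_{u,u}(\lambda)=\int_{\R^n} h(|\xi|^2)\,|\widehat u(\xi)|^2\,d\xi .
\]
This identity follows either from the quoted density formula together with polar coordinates $\xi=\sqrt\lambda\,\omega$, $d\xi=\tfrac12\lambda^{n/2-1}d\lambda\,d\omega$, or directly by the monotone class theorem starting from indicator functions.

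For part (i), apply this identity with $h(\lambda)=\lambda^{2s}\ln^2\lambda$ on $(0,\infty)$. The point $\lambda=0$ carries no mass: $E_{u,u}(\{0\})=\int_{\{\xi=0\}}|\widehat u|^2\,d\xi=0$, so excluding it costs nothing. Since $h(|\xi|^2)=|\xi|^{4s}\ln^2(|\xi|^2)$, finiteness of the spectral integral in \eqref{domainlo} is equivalent to finiteness of the Fourier integral, which is exactly the stated characterization.

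For part (ii), I would use the standard unbounded functional calculus. For measurable $\phi(\lambda)=\lambda^s\ln\lambda$, the operator $\phi(-\Delta)$ has domain $\{u:\int|\phi|^2\,dE_{u,u}<\infty\}$, which equals $H^{2s+\Log}(\R^n)$. On this domain, $\phi(-\Delta)u$ is the $L^2$-limit of $\phi_N(-\Delta)u$ with truncations $\phi_N=\phi\mathbf 1_{\{|\phi|\le N\}}$. For each bounded $\phi_N$, the unitary equivalence gives $\phi_N(-\Delta)u=\mathcal F^{-1}(\phi_N(|\xi|^2)\widehat u)$. Dominated convergence in $L^2(d\xi)$, with dominating function $|\phi(|\xi|^2)\widehat u|^2\in L^1$ by part (i), then yields
\[
\phi(-\Delta)u=\mathcal F^{-1}\bigl(|\xi|^{2s}\ln(|\xi|^2)\,\widehat u\bigr).
\]
The norm identity follows from Plancherel, or equivalently from $\|\phi(-\Delta)u\|^2=\int|\phi|^2\,dE_{u,u}$.

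It remains to justify calling this $(-\Delta)^{s+\Log}u$, which is the main interpretive point. On $C_c^2$ this is consistent with Theorem~\ref{pr 1.1}\ref{pr 1.1:claim 3}, since $|\xi|^{2s}\ln|\xi|^2=|\xi|^{2s}(2\ln|\xi|)$, and with Proposition~\ref{prop:derivative-frac-power}. So formula \eqref{eq:fourier-symbol-frac-log} may serve as the definition on the larger space $H^{2s+\Log}(\R^n)$, extending the operator from $C_c^2$. The only genuinely delicate step is establishing the identification $E(B)=\mathcal F^{-1}\mathbf 1_B(|\xi|^2)\mathcal F$, that is, that the Fourier multiplier family is the spectral resolution of the self-adjoint Laplacian. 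I would cite this standard fact \cite[Chapter 7]{Teschl2014} rather than reprove it, and note that it holds for every $s>0$, not only $s<1$.
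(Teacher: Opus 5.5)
Your argument is correct. For (i) it coincides with the paper's proof, and for (ii) it reaches the conclusion by a somewhat different route.

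For (i), the paper also pushes $|\widehat u|^2\,d\xi$ forward under $\xi\mapsto|\xi|^2$, by substituting $\lambda=r^2$ in the polar-coordinate density formula for $dE_{f,f}$. Your weight $\lambda^{2s}\ln^2\lambda$ is the one that matches \eqref{domainlo} and the statement. The paper's displayed computation instead carries $\lambda^{s}\ln^2\lambda$ and ends at $|\xi|^{2s}$, which is an exponent slip.

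For (ii), the paper argues weakly. For each $\varphi\in L^2(\R^n)$ it writes $\langle(-\Delta)^{s+\Log}u,\varphi\rangle=\int\lambda^s\ln\lambda\,dE_{u,\varphi}(\lambda)$. It converts this into $\int|\xi|^{2s}\ln(|\xi|^2)\,\widehat u\,\overline{\widehat\varphi}\,d\xi$ using the density of the complex measure $E_{u,\varphi}$ (Lemma~\ref{lem:fourier-dEuv}), and concludes by Plancherel and density. You argue strongly, through the bounded truncations $\phi_N$ and dominated convergence in $L^2(d\xi)$.

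The paper's route is shorter, but it tacitly needs $\lambda^s\ln\lambda\in L^1(|E_{u,\varphi}|)$ to integrate an unbounded function against a complex measure. This holds by Cauchy--Schwarz since $u\in H^{2s+\Log}(\R^n)$, but the paper does not say so. Your route relies on the slightly more structural fact that the unbounded calculus is the strong limit of bounded truncations. In return, the only limit passage is a plain dominated convergence whose majorant is exactly the integral shown finite in (i), so it is visible that the domain characterization is what makes (ii) work. Your argument also does not use the restriction $s\in(0,1)$ under which Lemma~\ref{lem:fourier-dEuv} is stated, so it covers all $s>0$ as claimed.

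Both proofs rest on the same input, $E(B)=\mathcal F^{-1}\mathbf 1_B(|\xi|^2)\mathcal F$, and both treat it as standard. Your identification with the pointwise operator on $C_c^2$ via Theorem~\ref{pr 1.1}\ref{pr 1.1:claim 3} is an equally valid alternative to the paper's appeal to Proposition~\ref{prop:derivative-frac-power}.
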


	Next we turn to the extension problem on the half-space $\mathbb{R}^{n+1}_{+}:=\mathbb{R}^{n}\times (0,\infty)$ of the fractional-logarithmic Laplacian $(-\Delta)^{s+\Log}$. To do this, we introduce the following notation. For a given $u\in C_c^\infty(\R^n)$, define
\begin{equation}\label{pot 2.0}
w_s(x,t)=p_{n,s}\int_{\R^n}\frac{t^{2s}u(y)}{(|x-y|^2+t^2)^{\frac{n+2s}{2}}}\,dy,
\qquad (x,t)\in\R^{n+1}_+.
\end{equation}
Then $w_s$ is the solution to the weighted Dirichlet problem
\[
\begin{cases}
\div_{(x,t)}\!\big(t^{1-2s}\nabla_{(x,t)}w_s\big)=0 & \text{in }\R^{n+1}_+,\\
w_s(\cdot,0)=u & \text{on }\R^n,
\end{cases}
\]
where
\[
p_{n,s}:=\Big(\displaystyle\int_{\R^n}\frac{1}{(|z|^2+1)^{\frac{n+2s}{2}}}\,dz\Big)^{-1}=\pi^{-\frac n2} \frac{\Gamma(\frac{n+2s}2)}{\Gamma(s) }.
\]
Set
	\begin{equation}\label{pot 2.11}
		v_s (x,t):=p_{n,s}\ \partial_s(\frac{w_s}{p_{n,s}})=p_{n,s}\  t^{2s}  \int_{\R^n} \frac{ 2\ln t-\ln(|x-y|^2+t^2)   }{(|x-y|^2+t^2)^{\frac{n+2s}{2}}}u(y)dy,  
	\end{equation}
    and
\begin{equation}\label{dsbs}
    {\bf d}_{s}:=\frac{c_{n,s}}{p_{n,s}}=2^{2\s}   \frac{ \s \ \Gamma(\s) }{ \Gamma(1-s)  },\quad    b_1:=p_{n,s} \int_{\R^n} \frac{  -\ln(|z|^2+1)   }{(|z|^2+1)^{\frac{n+2s}{2}}} dz. 
\end{equation}

	The following theorem provides an extension characterization of the fractional-logarithmic Laplacian.

	\begin{theorem}\label{pr 2.1}
		Let $s\in (0,1),\ w_s,v_s, {\bf d}_{s},b_1$ be defined as in \eqref{pot 2.0}, \eqref{pot 2.11} and \eqref{dsbs} respectively and $u\in C^\infty_{c}(\R^n)$.
		Then, $v_s$ is a solution of the inhomogeneous Dirichlet problem
        \begin{equation}\label{eq 1.1-ext}
			\arraycolsep=1pt\left\{
			\begin{array}{lll}
				\displaystyle  \div_{(x,t)}\!\big(t^{1-2s}\nabla_{(x,t)} v_s\big)
=2t^{-2s}\partial_t w_s \qquad
				&{\rm in}\ \    \R^{n+1}_+,\\[2.5mm]
				\phantom{   ----\!\!   }
				\displaystyle  v_s(\cdot, 0)= b_1u\qquad  &{\rm   on}\ \    \R^n
			\end{array}\right.
		\end{equation}
        Moreover, 
		\begin{align*}
			  &(-\Delta)^{s+\Log}u(x)\\
              &\quad =- \lim_{t\to0^+}   {\bf d}_s \Big[\big(b_{n,s}-2\ln t\big)t^{-2s} \big(w_s(x,t)-w_s(x,0)\big)
			+ t^{-2s}\big(v_s(x,t)-v_s(x,0)\big) \Big].
		\end{align*}
	\end{theorem}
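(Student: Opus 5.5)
The plan is to differentiate the Caffarelli--Silvestre representation in $s$. Both parts of the statement come from one observation: the family $w_s$ depends smoothly on $s$, and the defining equation and boundary data of $w_s$ can be differentiated term by term.

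\textbf{Step 1: the PDE for $v_s$.} Write $w_s=p_{n,s}W_s$ with $W_s:=w_s/p_{n,s}$. The equation $\div(t^{1-2s}\nabla W_s)=0$ holds for every $s\in(0,1)$. For fixed $(x,t)\in\R^{n+1}_+$ the kernel $t^{2s}(|x-y|^2+t^2)^{-(n+2s)/2}$ is smooth in $s$, and it is integrable against $u\in C_c^\infty$ uniformly for $s$ in compact subintervals. Hence $\partial_s$ commutes with $\nabla_{(x,t)}$ and with the $y$-integral, and $\partial_sW_s=v_s/p_{n,s}$ satisfies
\[
\div\big(t^{1-2s}\nabla \partial_sW_s\big)=2\ln t\,\div\big(t^{1-2s}\nabla W_s\big)+\partial_t(2\ln t)\,t^{1-2s}\partial_tW_s=2t^{-2s}\partial_tW_s .
\]
This uses $\partial_s t^{1-2s}=-2\ln t\, t^{1-2s}$ and $\div(\ln t\,F)=\ln t\,\div F+t^{-1}F_t$. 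Multiplying by $p_{n,s}$ gives the equation in \eqref{eq 1.1-ext}.

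For the boundary value, substitute $y=x+tz$ in \eqref{pot 2.11}. This gives
\[
v_s(x,t)=p_{n,s}\int_{\R^n}\frac{-\ln(|z|^2+1)}{(|z|^2+1)^{(n+2s)/2}}\,u(x+tz)\,dz .
\]
Dominated convergence then yields $v_s(x,t)\to b_1u(x)$ as $t\to0^+$.

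\textbf{Step 2: the limit formula.} Start from the Caffarelli--Silvestre identity
\[
(-\Delta)^su(x)=-\lim_{t\to0^+}{\bf d}_s\,t^{-2s}\big(w_s(x,t)-u(x)\big),
\]
which follows from the same substitution. Indeed,
\[
t^{-2s}(w_s-u)=p_{n,s}\int\frac{u(y)-u(x)}{(|x-y|^2+t^2)^{(n+2s)/2}}\,dy,
\]
and this tends to $-(-\Delta)^su/{\bf d}_s$. Rather than differentiate a limit in $s$, I will verify the claimed formula directly at fixed $t$ and then pass $t\to0$. Define
\[
F_s(x,t):=p_{n,s}\int\frac{u(y)-u(x)}{(|x-y|^2+t^2)^{(n+2s)/2}}\,dy .
\]
A direct computation gives
\[
\big(b_{n,s}-2\ln t\big)t^{-2s}\big(w_s-u\big)+t^{-2s}\big(v_s-b_1u\big)=b_{n,s}F_s+p_{n,s}\int\frac{(u(y)-u(x))\big(-\ln(|x-y|^2+t^2)\big)}{(|x-y|^2+t^2)^{(n+2s)/2}}\,dy .
\]
The key point is that the $2\ln t$ terms cancel. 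Here I use $\int p_{n,s}t^{2s}(\ldots)^{-(n+2s)/2}dy=1$ and $p_{n,s}\int t^{2s}\big(2\ln t-\ln(\cdot)\big)(\cdot)^{-(n+2s)/2}dy=b_1$, the latter again by scaling. The $-2\ln t$ inside $v_s$ therefore combines with $-2\ln t$ times $(w_s-u)$ to leave exactly the displayed integrals.

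\textbf{Step 3: letting $t\to0^+$.} As $t\to0^+$, multiplying by $-{\bf d}_s=-c_{n,s}/p_{n,s}$ gives $b_{n,s}(-\Delta)^su(x)+c_{n,s}\mathcal L_1u(x)$. By Theorem~\ref{pr 1.1}\ref{pr 1.1:claim 1}, this equals $(-\Delta)^{s+\Log}u(x)$.

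The main obstacle is justifying the limit of the logarithmic integral to the principal value $\mathcal L_1u$. I would symmetrize, writing
\[
u(x+z)+u(x-z)-2u(x)=O(|z|^2)\ \text{near }0,
\]
which makes the integrand bounded by $|z|^{2-n-2s}\big(1+|\ln|z||\big)$ locally, uniformly in $t\le1$. At infinity the integrand is bounded by $|z|^{-n-2s}\ln|z|$ times $\|u\|_\infty$. Dominated convergence then applies, since $-\ln(|z|^2+t^2)\to-2\ln|z|$ pointwise.
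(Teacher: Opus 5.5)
Your proof is correct. The PDE and the trace are obtained as in the paper: you differentiate the extension equation in $s$ and rescale $y=x+tz$. Normalizing to $W_s=w_s/p_{n,s}$ only saves you from carrying the term $\frac{\partial_s p_{n,s}}{p_{n,s}}w_s$.

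The limit formula is where you diverge. The paper differentiates the Caffarelli--Silvestre identity $(-\Delta)^su=-\lim_{t\to0^+}{\bf d}_s t^{-2s}(w_s-u)$ with respect to $s$, moving $\partial_s$ inside $\lim_{t\to0^+}$ without justification. It then uses $\partial_s w_s(x,0)=0$, i.e.\ $b_1=-\partial_s p_{n,s}/p_{n,s}$ from Lemma~\ref{b1v1s}, and performs the same cancellation of the $2\ln t$ terms that you do. Finally it computes the resulting coefficient $k_s={\bf d}_s'/{\bf d}_s+\partial_s p_{n,s}/p_{n,s}$ via digamma identities to see that it equals $b_{n,s}$.

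You instead take $b_{n,s}$ as given in the statement and check the algebraic identity at fixed $t>0$. You then pass to the limit by an explicit symmetrized dominated-convergence argument, which the paper omits, and identify the result through Theorem~\ref{pr 1.1}\ref{pr 1.1:claim 1}.

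Your route is the more rigorous one. It needs neither the interchange of $\partial_s$ with the limit, nor the identity $b_1=-\partial_s p_{n,s}/p_{n,s}$, nor the digamma computation. Its price is that it rests on Proposition~\ref{prop:dt-fraclap}. The paper's argument, once the interchange is justified, shows where the coefficient comes from, namely $b_{n,s}=\partial_s\ln({\bf d}_s p_{n,s})=\partial_s\ln c_{n,s}$. It would also give an independent derivation of $\partial_s(-\Delta)^su=b_{n,s}(-\Delta)^su+c_{n,s}\mathcal L_1u$ from the extension.

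One cosmetic fix: for $|z|\ge1$ and $t\le1$, dominate by $C\|u\|_\infty|z|^{-n-2s}(1+\ln|z|)$ rather than $|z|^{-n-2s}\ln|z|$, since $|\ln(|z|^2+t^2)|$ does not vanish at $|z|=1$.
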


	Here and in what follows, we denote $a_\pm:=\max\{0,\pm a\}.$ Set
	\[
	{\bf k}_{s+\Log,\pm}(z):=c_{n,s}\,|z|^{-n-2s}\,(-\ln|z|)_{\pm},\qquad z\in\R^n\setminus\{0\},
	\]
	so that the full fractional--logarithmic kernel can be decomposed as
	\[
	{\bf K}_{s+\Log}(z)
	= c_{n,s}b_{n,s}\,|z|^{-n-2s}+2{\bf k}_{s+\Log,+}(z)-2{\bf k}_{s+\Log,-}(z).
	\]
	
	\medskip
	
	We denote by $\mathcal H^{s+\Log}(\R^n)$ the energy space associated with the
	\emph{positive part} of the fractional--logarithmic kernel, namely
	\[
	\mathcal H^{s+\Log}(\R^n)
	:=\Bigl\{
	u\in L^2(\R^n):\ 
	\iint_{\R^n\times\R^n}\bigl(u(x)-u(y)\bigr)^2\,
	{\bf k}_{s+\Log,+}(x-y)\,dx\,dy<\infty
	\Bigr\},
	\]
	where
	\[
	{\bf k}_{s+\Log,+}(z)
	:=c_{n,s}\,|z|^{-n-2s}\,(-\ln|z|)_+,\qquad z\in\R^n\setminus\{0\}.
	\]
	For $u\in L^2(\R^n)$, we define the Gagliardo-type seminorm
	\[
	[u]_{s+\Log,+}^2
	:=\iint_{\R^n\times\R^n}\bigl(u(x)-u(y)\bigr)^2\,
	{\bf k}_{s+\Log,+}(x-y)\,dx\,dy.
	\]
	In particular,
	{\small\[
	\mathcal H^{s+\Log}(\R^n)
	=\bigl\{u\in L^2(\R^n):\ [u]_{s+\Log,+}<\infty\bigr\},\ 
	\|u\|_{\mathcal H^{s+\Log}}^{\,2}
	:=\|u\|_{L^2(\R^n)}^{\,2}+[u]_{s+\Log,+}^2.
	\]}
    By Proposition \ref{prop:Hslog-Hilbert},  $\mathcal H^{s+\Log}(\R^n)$ is a Hilbert space. Next, we clarify the relationship between the spectral fractional--logarithmic Sobolev space and the energy space \(\mathcal H^{s+\Log}(\R^n)\) defined here.

  \begin{proposition}\label{prop:Hslog-vs-form}
The embedding $H^{s+\Log}(\R^n)\hookrightarrow \mathcal H^{s+\Log}(\R^n)$ is continuous.
In general the reverse inclusion fails, hence the embedding is typically strict.
Moreover,
\[
\mathcal H^{s+\Log}(\R^n)
=\Bigl\{
u\in L^2(\R^n):\ 
\int_{\R^n}|\xi|^{2s}\ln(|\xi|^2)\,|\widehat u(\xi)|^2\,d\xi<\infty
\Bigr\}.
\]
\end{proposition}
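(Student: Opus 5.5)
The plan is to pass everything to the Fourier side via Plancherel, compute the symbol of the positive-part kernel ${\bf k}_{s+\Log,+}$, and compare it with $|\xi|^{2s}\ln(|\xi|^2)$. Here $H^{s+\Log}(\R^n)$ denotes the spectral space obtained from \eqref{domainlo} with $s$ replacing $2s$, i.e. $\lambda^{s}\ln^2\lambda$ replacing $\lambda^{2s}\ln^2\lambda$. By Theorem~\ref{thm:fourspec}(i) with $s/2$ in place of $s$, its norm is equivalent to
\[
\int_{\R^n}\bigl(1+|\xi|^{2s}\ln^2(|\xi|^2)\bigr)|\widehat u(\xi)|^2\,d\xi .
\]

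For the seminorm, since ${\bf k}_{s+\Log,+}$ is even, Plancherel and Fubini give
\[
[u]_{s+\Log,+}^2=2\int_{\R^n} m(\xi)\,|\widehat u(\xi)|^2\,d\xi ,
\qquad
m(\xi):=\int_{B_1(0)}\bigl(1-\cos(\xi\cdot z)\bigr)\,c_{n,s}|z|^{-n-2s}(-\ln|z|)\,dz .
\]
This is justified first for $u\in C_c^\infty$ and then for $u\in L^2$ by monotone convergence or density, since both sides are nonnegative. Note that $m$ is radial, continuous and strictly positive on $\R^n\setminus\{0\}$.

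The key step, and the one I expect to be the main obstacle, is two-sided asymptotics for $m$.

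\emph{Small frequencies.} Using $1-\cos t\le t^2/2$ and $\int_{B_1}|z|^{2-n-2s}(-\ln|z|)\,dz<\infty$ gives $m(\xi)\le C|\xi|^2$ for $|\xi|\le 2$.

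\emph{Large frequencies.} Substituting $z=w/|\xi|$ and writing $e=\xi/|\xi|$,
\[
m(\xi)=c_{n,s}|\xi|^{2s}\Bigl[\ln|\xi|\,A(|\xi|)-B(|\xi|)\Bigr],
\]
with
\[
A(R)=\int_{|w|<R}\frac{1-\cos(e\cdot w)}{|w|^{n+2s}}\,dw,
\qquad
B(R)=\int_{|w|<R}\frac{1-\cos(e\cdot w)}{|w|^{n+2s}}\ln|w|\,dw .
\]
As $R\to\infty$, $A(R)$ increases to $A_\infty=c_{n,s}^{-1}\in(0,\infty)$. Moreover $B(R)$ stays bounded: near $0$ the integrand is $O(|w|^{2-n-2s}|\ln|w||)$, and at infinity it is $O(|w|^{-n-2s}\ln|w|)$, both integrable. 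Hence $m(\xi)\sim|\xi|^{2s}\ln|\xi|$ as $|\xi|\to\infty$, so there is $R_0$ with
\[
c\,|\xi|^{2s}\ln|\xi|\le m(\xi)\le C|\xi|^{2s}\ln|\xi| \quad\text{for } |\xi|\ge R_0 .
\]
On the compact annulus $2\le|\xi|\le R_0$, continuity and positivity of $m$ make it comparable to $|\xi|^{2s}\ln|\xi|$.

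These bounds yield the norm equivalence
\[
\|u\|_{\mathcal H^{s+\Log}}^2\asymp\int_{\R^n}\bigl(1+|\xi|^{2s}\ln_+|\xi|\bigr)|\widehat u|^2\,d\xi .
\]
For $u\in L^2$, the part $|\xi|\le e$ of $\int|\xi|^{2s}\ln(|\xi|^2)|\widehat u|^2$ is absolutely convergent, since $|\xi|^{2s}|\ln|\xi||$ is bounded there. So that integral is finite exactly when $\int_{|\xi|>e}|\xi|^{2s}\ln|\xi|\,|\widehat u|^2<\infty$, which is the claimed characterization.

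Continuity of the embedding then follows from the pointwise bound $|\xi|^{2s}\ln_+|\xi|\le C\bigl(1+|\xi|^{2s}\ln^2(|\xi|^2)\bigr)$.

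For strictness, take $u\in L^2$ radial with $|\widehat u(\xi)|^2=|\xi|^{-n-2s}(\ln|\xi|)^{-3}$ for $|\xi|>e$ and $\widehat u=0$ otherwise. In polar coordinates the energy integral becomes $\int_e^\infty r^{-1}(\ln r)^{-2}\,dr<\infty$, so $u\in\mathcal H^{s+\Log}(\R^n)$. The spectral integral becomes a multiple of $\int_e^\infty r^{-1}(\ln r)^{-1}\,dr=\infty$, so $u\notin H^{s+\Log}(\R^n)$.
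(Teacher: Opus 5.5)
Your proof is correct. It follows the same overall Fourier route as the paper: Plancherel reduces $[u]_{s+\Log,+}^2$ to $\int m(\xi)|\widehat u(\xi)|^2\,d\xi$, and one then compares $m$ with $|\xi|^{2s}\ln|\xi|$. The key estimate, however, is handled differently and more completely.

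The paper proves only the upper bound $m(\xi)\le C\bigl(1+|\xi|^{2s}(1+|\ln|\xi||)\bigr)$, by splitting the $z$-integral at $|z|=1/|\xi|$. That bound gives the continuous embedding, and the inclusion of the Fourier-defined space into $\mathcal H^{s+\Log}(\R^n)$. The reverse inclusion in the stated characterization needs the lower bound $m(\xi)\gtrsim|\xi|^{2s}\ln|\xi|$ for large $|\xi|$. The paper asserts it ("Therefore, \dots") without proving it.

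Your scaling $z=w/|\xi|$ supplies both directions at once. With $A(R)\uparrow c_{n,s}^{-1}$ and $B(R)$ bounded, you get the sharp asymptotics $m(\xi)\sim|\xi|^{2s}\ln|\xi|$, so $2m(\xi)\sim|\xi|^{2s}\ln(|\xi|^2)$, matching the symbol. Combined with continuity and positivity of $m$ on the intermediate annulus, this yields a genuine two-sided norm equivalence. Likewise, where the paper only asserts that a counterexample can be constructed, your explicit choice $|\widehat u|^2=|\xi|^{-n-2s}(\ln|\xi|)^{-3}$ on $\{|\xi|>e\}$ settles strictness.

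Two small remarks.

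First, the paper's proof reads $H^{s+\Log}(\R^n)$ as the operator domain from \eqref{domainlo}, with weight $|\xi|^{4s}\ln^2(|\xi|^2)$. Its strictness discussion and the closing remark about operator domain versus form domain confirm this. You instead use the weight $|\xi|^{2s}\ln^2(|\xi|^2)$. This is harmless: your space contains the paper's continuously, because $|\xi|^{2s}\ln^2(|\xi|^2)$ is bounded on $\{|\xi|<1\}$ and is at most $|\xi|^{4s}\ln^2(|\xi|^2)$ on $\{|\xi|\ge 1\}$. So your embedding implies the paper's, and your counterexample also lies outside the smaller space. Both readings are covered.

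Second, the Plancherel identity needs no density step. For every $u\in L^2(\R^n)$ one can write $[u]_{s+\Log,+}^2=\int {\bf k}_{s+\Log,+}(z)\,\|u(\cdot+z)-u\|_{L^2}^2\,dz$. Then $\|u(\cdot+z)-u\|_{L^2}^2=\int 2\bigl(1-\cos(\xi\cdot z)\bigr)|\widehat u(\xi)|^2\,d\xi$, and Tonelli applies directly since everything is nonnegative.
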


In fact, this is consistent with the general principle for nonnegative self-adjoint operators:
the operator domain is typically smaller than the form domain.

	\medskip
	
	Let $\Omega\subset\mathbb{R}^n$ be a open set, we define the Dirichlet subspace by
	\[
	\mathcal H^{s+\Log}_{0}(\Omega)
	:=\Bigl\{
	u\in \mathcal H^{s+\Log}(\R^n):\ u=0\ \text{a.e. in }\Omega^{c}
	\Bigr\}.
	\]
	Throughout, we identify $L^p(\Omega)$ with the subspace of $L^p(\R^n)$ consisting
	of functions vanishing a.e.\ on $\R^n\setminus\Omega$. Here and in what follows, we always assume that $\Omega\subset\mathbb{R}^n$ is a bounded Lipschitz domain.

    We first establish a classical Poincar\'e inequality associated with our fractional--logarithmic operator.

	\begin{proposition}\label{prop:Poincare-Hslog-plus11}
		If $u\in \mathcal H^{s+\Log}_0(\Omega)$, then there exists a constant
		$C=C(n,s,\Omega)>0$ such that
		\begin{equation}\label{eq:Poincare-Hslog-plus11}
			[u]_{s+\Log,+}^2\ge C\|u\|_{L^2(\Omega)}^{\,2}.
		\end{equation}
	\end{proposition}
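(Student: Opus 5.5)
The approach is to bound $[u]_{s+\Log,+}^2$ from below by using only pairs $(x,y)$ with $x\in\Omega$ and $y\in\Omega^c$. On those pairs $u(y)=0$, so the integrand reduces to $u(x)^2$ times the kernel, which is the standard argument for fractional Poincar\'e inequalities on bounded sets. The only difficulty is that ${\bf k}_{s+\Log,+}$ vanishes for $|z|\ge1$. We therefore need a region of $\Omega^c$ within distance $<1$ of each $x\in\Omega$ where the kernel has a uniform positive lower bound. This works directly when $\Omega$ is small, but not when $\operatorname{diam}\Omega\ge 1$.

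\emph{Step 1 (restriction).} Since the integrand is nonnegative and symmetric,
\[
[u]_{s+\Log,+}^2\ \ge\ 2\int_\Omega u(x)^2\Big(\int_{\Omega^c}{\bf k}_{s+\Log,+}(x-y)\,dy\Big)dx=:2\int_\Omega u(x)^2\,\kappa(x)\,dx .
\]
If $\inf_\Omega\kappa>0$ we are done. For $x$ with $\delta(x):=\dist(x,\Omega^c)<1/2$, the boundary geometry gives a lower bound. Indeed $\kappa(x)\ge c_{n,s}\ln 2\cdot 2^{n+2s}\,|\Omega^c\cap B_{1/2}(x)|$, and $|\Omega^c\cap B_{1/2}(x)|\ge c>0$. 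This last bound follows from the Lipschitz (uniform exterior cone) condition, or simply because $\Omega$ is bounded and $B_{1/2}(x)$ reaches near the boundary. Interior points far from $\partial\Omega$ are not handled this way.

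\emph{Step 2 (interior points: reduction to a covering/chaining argument).} I would fix $\rho\in(0,1/4)$ and a finite grid of cubes $Q_j$ of side $\rho$ covering $\overline\Omega$; cubes outside $\Omega$ count as part of $\Omega^c$, where $u=0$. For adjacent cubes $Q,Q'$, every pair $x\in Q$, $y\in Q'$ satisfies $|x-y|\le 2\sqrt n\rho<1$ once $\rho$ is small. Hence the kernel is bounded below by $c_\rho:=c_{n,s}(2\sqrt n\rho)^{-n-2s}\ln\frac1{2\sqrt n\rho}>0$. Write $m_Q$ for the average of $u$ on $Q$. Then
\[
\iint_{Q\times Q'}(u(x)-u(y))^2\,dx\,dy\ \ge\ \rho^n\Big(\int_Q (u-m_Q)^2+\int_{Q'}(u-m_{Q'})^2\Big)+\rho^{2n}(m_Q-m_{Q'})^2\cdot c,
\]
after expanding the square. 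The same lower bound holds for the pair $Q\times Q$, which gives the oscillation terms $\int_Q(u-m_Q)^2\lesssim \rho^{-n} c_\rho^{-1}\iint_{Q\times Q}(u(x)-u(y))^2{\bf k}_{+}$. Then I would chain from any interior cube to a cube lying in $\Omega^c$, on which $m=0$. The chain has at most $N=N(\Omega,\rho)$ steps because $\Omega$ is bounded. By Cauchy--Schwarz, $m_{Q}^2\le N\sum (m_{Q_i}-m_{Q_{i+1}})^2$. Summing over cubes, each adjacent pair is counted a bounded number of times, so $\sum_Q\int_Q u^2\le 2\sum_Q\big(\int_Q(u-m_Q)^2+\rho^n m_Q^2\big)\le C(n,s,\Omega)[u]_{s+\Log,+}^2$.

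\emph{Main obstacle.} The only delicate point is Step 2: the positive part of the kernel has compact support, so no single global integral is bounded below. The chaining must be organized so that each pair of neighbouring cubes enters the seminorm with bounded multiplicity, which holds since the grid is finite. An alternative would be a compactness-contradiction argument. That route would need a compact embedding into $L^2$, which has not yet been established here, so I prefer the constructive chaining. Boundedness of $\Omega$ is essential, since it guarantees that the chain length $N$ is finite; the Lipschitz assumption is in fact not needed here.
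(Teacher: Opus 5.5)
Your proof is correct, and it takes a genuinely different route from the paper's.

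The paper bounds $\mathbf{k}_{s+\Log,+}(z)\ge c_{n,s}|z|^{-n-2s}\mathbf 1_{\{|z|\le e^{-1}\}}$. It then asserts, by analogy with the cited Lemma~1.28 of Molica Bisci et al., a Sobolev inequality $\|u\|_{L^{2_s^*}}^2\le C_1(n,s)\,[u]_{s+\Log,+}^2$, and concludes with H\"older. That Sobolev step is exactly where the compact support of the positive kernel matters, and the paper does not address it. Testing with $u(x)=\varphi(x/R)$ gives $\|u\|_{L^{2_s^*}}^2\sim R^{n-2s}$ but $[u]_{s+\Log,+}^2\lesssim R^{n-2}$, so $C_1$ cannot depend on $(n,s)$ alone. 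For a fixed bounded $\Omega$, a truncated-kernel Sobolev inequality already contains the Poincar\'e inequality via H\"older. The natural way to prove it is to bound $\iint_{\{|x-y|\ge e^{-1}\}}(u(x)-u(y))^2|x-y|^{-n-2s}\,dx\,dy$ by $\|u\|_{L^2}^2$, and then $\|u\|_{L^2}^2$ by the seminorm, which is the statement being proved.

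Your cube-chaining argument supplies this long-range control directly and avoids that circularity. It uses only $u\in L^2$ and $u=0$ off $\Omega$, and it works for every bounded $\Omega$, with no Lipschitz or connectedness assumption. The constant depends on $\Omega$ through $\rho$, the chain length and the number of cubes, which is the right kind of dependence given the scaling above. The paper's route, if completed, would give the stronger $L^{2_s^*}$ bound in one stroke. That bound follows from your result anyway: split $\mathcal E_s$ at $|x-y|=e^{-1}$ as in the proof of Proposition~\ref{embedding}(i), then apply the standard fractional Sobolev inequality.

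A few small points:
\begin{itemize}
\item Step~1 is unnecessary, since Step~2 already sums over all cubes meeting $\overline\Omega$, boundary cubes included.
\item As written, Step~1 is also not quite right. $|\Omega^c\cap B_{1/2}(x)|$ is not bounded below uniformly as $\delta(x)\uparrow 1/2$, and boundedness of $\Omega$ alone gives no such bound.
\item In Step~2, your displayed estimate is in fact an identity with $c=1$, because all cross terms vanish by the definition of $m_Q$ and $m_{Q'}$.
\item You need $2\sqrt n\,\rho<1$, not just $\rho<1/4$.
\item Enlarge the grid by a layer of cubes contained in $\R^n\setminus\overline\Omega$, so that every chain ends at a cube with $m=0$.
\item The sets $Q\times Q'$ for distinct ordered pairs are disjoint. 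Hence the local energies sum to at most $[u]_{s+\Log,+}^2$, and the chain-counting multiplicity is at most the number of cubes.
\end{itemize}
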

	
	As a consequence of Proposition~\ref{prop:Poincare-Hslog-plus11}, the seminorm
	$[u]_{s+\Log,+}$ defines a norm on $\mathcal H^{s+\Log}_0(\Omega)$. Define
	\[
	\|u\|_{\mathcal H^{s+\Log}_{0}(\Omega)}:=[u]_{s+\Log,+}.
	\]
	Moreover, $\mathcal H^{s+\Log}_{0}(\Omega)$ is a closed subspace of $\mathcal H^{s+\Log}(\R^n)$ and therefore a Hilbert space. For $u,w\in
	C_c^{\infty}(\Omega)$, we introduce the bilinear form
	\[
	\mathcal E_{\pm}(u,w)
	:=\iint_{\R^n\times\R^n}(u(x)-u(y))(w(x)-w(y))\,{\bf k}_{s+\Log,\pm}(x-y)\,dx\,dy,
	\]
    and
  \begin{equation}\label{enerln}
      \mathcal E_{s+\Log}(u,w)
	:=\mathcal E_{+}(u,w)-\mathcal E_{-}(u,w)+\frac{b_{n,s}c_{n,s}}{2}\,\mathcal E_{s}(u,w),
  \end{equation}
	where
	\[
	\mathcal E_{s}(u,w)
	:=\iint_{\R^n\times\R^n}(u(x)-u(y))(w(x)-w(y))\,|x-y|^{-n-2s}\,dx\,dy,
	\]
	so that
	\[
	\|u\|_{\mathcal H^{s+\Log}_{0}(\Omega)}
	=[u]_{s+\Log,+}
	=\sqrt{\mathcal E_{+}(u,u)}.
	\]

The full energy \(\mathcal E_{s+\Log}\) is not always positive definite. The following proposition provides a more refined analysis of the bilinear forms introduced above.

	\begin{proposition}\label{embedding-1}
    Let $s\in (0,1)$ and $\Omega$ be bounded.
		\begin{enumerate}[label=\normalfont\textup{(\roman*)}]
			\item  The negative part satisfies the estimate
			\[
			0\le \mathcal E_{-}(u,u)\le \frac{1}{s^2}\,c_{n,s}\,|\mathbb S^{n-1}|\,
			\|u\|_{L^2(\Omega)}^2,
			\quad u\in L^2(\Omega),
			\]
            	where $c_{n,s}$ is given by~\eqref{norm const1}.
			
			\item Let $u\in \mathcal H^{s+\Log}_0(\Omega)$ and set
			\[
			\operatorname{diam}(\Omega):=\sup\{|x-y|:\ x,y\in\Omega\}< 1.
			\]
			Then
			\begin{equation}\label{eq:Eplus-minus-lower}
				\mathcal E_{+}(u,u)-\mathcal E_{-}(u,u)
				\ge
				\frac{1}{s}\,c_{n,s}\,|\mathbb S^{n-1}|\,\operatorname{diam}(\Omega)^{-2s}
				\Bigl(\ln \frac{1}{\operatorname{diam}(\Omega)}-\frac{1}{2s}\Bigr)\,
				\|u\|_{L^2(\Omega)}^{\,2}.
			\end{equation}
			In particular, if $\operatorname{diam}(\Omega)\le e^{-1/(2s)}$, then
			\[ 
			\mathcal E_{+}(u,u)-\mathcal E_{-}(u,u)\ge 0.
			\]
			
			\item Assume that $\operatorname{diam}(\Omega)<e^{-1/(2s)}$ and $b_{n,s}\ge0$. Then  $\mathcal E_{s+\Log}(u,u)$ is positive definite on
			$\mathcal H^{s+\Log}_0(\Omega)$. Moreover, $|u|\in \mathcal H^{s+\Log}_0(\Omega)$ and
			\begin{equation}\label{eq:modulus-invariance}
				\mathcal E_{s+\Log}(|u|,|u|)\le \mathcal E_{s+\Log}(u,u).
			\end{equation}
			Equality holds in~\eqref{eq:modulus-invariance} if and only if $u$ does not change sign.
		\end{enumerate}
	\end{proposition}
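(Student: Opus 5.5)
For part (i), I would use $(u(x)-u(y))^2\le 2u(x)^2+2u(y)^2$ and symmetry to get
\[
\mathcal E_-(u,u)\le 4\|u\|_{L^2}^2\int_{\R^n}{\bf k}_{s+\Log,-}(z)\,dz .
\]
The kernel ${\bf k}_{s+\Log,-}$ is supported in $\{|z|\ge1\}$, and in polar coordinates
\[
\int_{|z|\ge1} c_{n,s}|z|^{-n-2s}\ln|z|\,dz=c_{n,s}|\mathbb S^{n-1}|\int_1^\infty r^{-1-2s}\ln r\,dr=\frac{c_{n,s}|\mathbb S^{n-1}|}{4s^2}.
\]
Multiplying by $4$ gives exactly the constant $\frac1{s^2}c_{n,s}|\mathbb S^{n-1}|$. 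So (i) is a direct computation.

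For part (ii), let $u\in\mathcal H^{s+\Log}_0(\Omega)$ and write $d=\operatorname{diam}(\Omega)<1$. I would bound $\mathcal E_+$ from below by keeping only the pairs $x\in\Omega$, $y\in\Omega^c$ (counted twice by symmetry):
\[
\mathcal E_+(u,u)\ge 2\int_\Omega u(x)^2\int_{\Omega^c}{\bf k}_{s+\Log,+}(x-y)\,dy\,dx .
\]
For $x\in\Omega$ we have $\Omega\subset B_d(x)$, so $\Omega^c\supset \R^n\setminus B_d(x)$. Hence the inner integral is at least
\[
c_{n,s}|\mathbb S^{n-1}|\int_d^1 r^{-1-2s}\ln\frac1r\,dr .
\]
For $\mathcal E_-$, I would compute exactly rather than use (i). For $x,y\in\Omega$ we have $|x-y|<d<1$, so the negative kernel vanishes there. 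Therefore
\[
\mathcal E_-(u,u)=2\int_\Omega u^2(x)\int_{\Omega^c}{\bf k}_{s+\Log,-}(x-y)\,dy\,dx .
\]
Combining the two, $\mathcal E_+-\mathcal E_-$ is bounded below by
\[
2\int_\Omega u^2(x)\int_{\Omega^c}c_{n,s}|x-y|^{-n-2s}\bigl(-\ln|x-y|\bigr)\,dy\,dx .
\]
The integrand is positive for $|x-y|<1$ and negative otherwise. Since $\Omega^c\supset\{|x-y|\ge d\}$ and the part of $\Omega^c$ inside $B_d(x)$ has $-\ln|x-y|>0$, the inner integral is at least
\[
c_{n,s}|\mathbb S^{n-1}|\int_d^\infty r^{-1-2s}(-\ln r)\,dr=c_{n,s}|\mathbb S^{n-1}|\,\frac{d^{-2s}}{2s}\Bigl(\ln\frac1d-\frac1{2s}\Bigr),
\]
by integration by parts. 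The factor $2$ then gives \eqref{eq:Eplus-minus-lower}. The case $d\le e^{-1/(2s)}$ follows immediately.

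For part (iii), positivity follows from (ii), from $b_{n,s}c_{n,s}\mathcal E_s\ge0$, and from $\mathcal E_+(u,u)>0$ for $u\ne0$ by Proposition~\ref{prop:Poincare-Hslog-plus11}. Strictness needs care: if $d<e^{-1/(2s)}$, the right-hand side of \eqref{eq:Eplus-minus-lower} is strictly positive. For the modulus property I would use $||u(x)|-|u(y)||\le|u(x)-u(y)|$, which gives $|u|\in\mathcal H^{s+\Log}_0(\Omega)$. The main obstacle is the negative part, since $\mathcal E_-$ enters with a minus sign. I would write
\[
(u(x)-u(y))^2-(|u(x)|-|u(y)|)^2=2\bigl(|u(x)u(y)|-u(x)u(y)\bigr)=4u_+(x)u_-(y)+4u_-(x)u_+(y).
\]
This is supported on $x,y\in\Omega$, where ${\bf k}_{s+\Log,-}=0$ because $d<1$. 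So the $\mathcal E_-$ terms of $u$ and $|u|$ coincide. The difference $\mathcal E_{s+\Log}(u,u)-\mathcal E_{s+\Log}(|u|,|u|)$ then equals
\[
4\iint_{\Omega\times\Omega}\bigl(u_+(x)u_-(y)+u_-(x)u_+(y)\bigr)\Bigl({\bf k}_{s+\Log,+}+\frac{b_{n,s}c_{n,s}}2|x-y|^{-n-2s}\Bigr)dx\,dy\ \ge0.
\]
The kernel in this integrand is strictly positive on $\Omega\times\Omega$ because $\ln(1/|x-y|)>0$ there. Hence the difference vanishes iff $u_+(x)u_-(y)=0$ a.e., i.e.\ iff $u$ does not change sign.
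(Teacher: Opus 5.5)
Your proposal is correct and follows the paper's route. For (i) you use the same bound $(a-b)^2\le 2a^2+2b^2$ with $\int_1^\infty r^{-1-2s}\ln r\,dr=\frac{1}{4s^2}$. For (ii) you use the same splitting into the $\Omega\times\Omega$ part (nonnegative since $\operatorname{diam}(\Omega)<1$) and the cross part estimated over $\{|x-y|\ge \operatorname{diam}(\Omega)\}$. For (iii) you use the same sign argument on $\Omega\times\Omega$. Your identity $(u(x)-u(y))^2-(|u(x)|-|u(y)|)^2=4u_+(x)u_-(y)+4u_-(x)u_+(y)$, together with the strict positivity of the kernel on $\Omega\times\Omega$, makes explicit the modulus inequality and its equality case, which the paper only asserts ``by the proof of (ii)''.
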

	
	Therefore, the quadratic form $\mathcal E_{s+\Log}$ associated with $(-\Delta)^{s+\Log}$ is well-defined on
	$\mathcal H^{s+\Log}_{0}(\Omega)$. We recall that $\mathcal H^{s}_{0}(\Omega)$ is the space of measurable functions
$u:\mathbb R^n\to\mathbb R$ such that $u\equiv 0$ on $\mathbb R^n\setminus\Omega$ and
\[
  \|u\|_{\mathcal H^{s}_{0}(\Omega)}
  :=\Biggl(\iint_{\mathbb R^n\times\mathbb R^n}\frac{(u(x)-u(y))^2}{|x-y|^{n+2s}}\,dx\,dy\Biggr)^{1/2}<\infty.
\]
Equipped with this norm, $\mathcal H^{s}_{0}(\Omega)$ is a Hilbert space. It is well known that (see \cite[Part I]{Molica-Bisci-Radulescu-Servadei})
\begin{enumerate}
\item[(i)] (Continuous embedding) If $n>2s$ and $2_s^*:=\frac{2n}{n-2s}$, then
\[
  \|u\|_{L^{2_s^*}(\Omega)} \le C\,\|u\|_{\mathcal H^{s}_{0}(\Omega)}
  \qquad\text{for all }u\in\mathcal H^{s}_{0}(\Omega).
\]
More generally, for every $q\in[1,2_s^*]$ one has a continuous embedding
$\mathcal H^{s}_{0}(\Omega)\hookrightarrow L^q(\Omega)$:
\[
  \|u\|_{L^{q}(\Omega)} \le C_q\,\|u\|_{\mathcal H^{s}_{0}(\Omega)}.
\]

\item[(ii)] (Compact embedding) If $n>2s$, then the embedding
$\mathcal H^{s}_{0}(\Omega)\hookrightarrow L^q(\Omega)$ is compact for every
\[
  1\le q<2_s^*.
\]

\end{enumerate}

Next, we clarify the relation between the fractional--logarithmic space and the standard fractional Sobolev spaces. In particular, \(\mathcal H^{s+\Log}_{0}(\Omega)\) is an intermediate space between \(\mathcal H^{s+\varepsilon}_{0}(\Omega)\) and \(\mathcal H^{s}_{0}(\Omega)\) for every $\varepsilon\in(0,1-s).$ Moreover, at the critical exponent \(2_s^*\), we still obtain compactness of the embedding into \(L^{2_s^*}(\Omega)\). This is markedly different from the classical Sobolev and fractional Sobolev settings. The key mechanism is the stronger singular behavior induced by the logarithmic factor, whose contribution is essential in the proof. We provide a concise compactness argument, with Lemma~\ref{lem:Es-controlled} as the main tool.

	\begin{proposition}\label{embedding}
Let $s\in (0,1)$ and $n>2s.$
	\begin{enumerate}[label=\normalfont\textup{(\roman*)}]
			\item For  $\varepsilon\in(0,1-s)$, one has the continuous inclusion
			\[
			\mathcal H^{s+\varepsilon}_{0}(\Omega)\subset \mathcal H^{s+\Log}_{0}(\Omega)\subset \mathcal H^{s}_{0}(\Omega).
			\]
			
			\item The embedding $\mathcal H^{s+\Log}_{0}(\Omega)\hookrightarrow L^{p}(\Omega)$ is compact for every $p\in[1,2_s^*]$.
		\end{enumerate}
	\end{proposition}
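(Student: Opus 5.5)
For part (i) I will compare kernels pointwise. Fix $\varepsilon\in(0,1-s)$ and let $u\in\mathcal H^{s+\varepsilon}_0(\Omega)$. Since $\Omega$ is bounded, $u(x)-u(y)\neq 0$ forces at least one of $x,y$ to lie in $\Omega$. I split the double integral for $[u]^2_{s+\Log,+}$ into the region $\{|x-y|<1\}$, where the kernel ${\bf k}_{s+\Log,+}$ is supported.

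On this region the elementary bound $r^{\varepsilon}(-\ln r)\le (e\varepsilon)^{-1}$ for $0<r<1$ gives
\[
{\bf k}_{s+\Log,+}(z)\le \frac{c_{n,s}}{e\varepsilon}\,|z|^{-n-2s-2\varepsilon}.
\]
Hence $[u]^2_{s+\Log,+}\le C\|u\|^2_{\mathcal H^{s+\varepsilon}_0(\Omega)}$, which is the first inclusion.

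For the second inclusion I split $\mathcal E_s(u,u)$ into two pieces.
\begin{enumerate}
\item On $\{|x-y|<\delta\}$ with $\delta<1$ small, I use $-\ln|x-y|\ge\ln(1/\delta)\ge 1$. This bounds that piece by $c_{n,s}^{-1}[u]^2_{s+\Log,+}$.
\item On $\{|x-y|\ge\delta\}$, I use $(u(x)-u(y))^2\le 2u(x)^2+2u(y)^2$ together with $\int_{|z|\ge\delta}|z|^{-n-2s}\,dz<\infty$. This bounds that piece by $C\delta^{-2s}\|u\|^2_{L^2}$.
\end{enumerate}
The $L^2$ norm is then controlled by the Poincaré inequality of Proposition~\ref{prop:Poincare-Hslog-plus11}, so $\|u\|_{\mathcal H^s_0(\Omega)}\le C[u]_{s+\Log,+}$.

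The same splitting with an arbitrary $\delta$ yields the refined estimate, which is presumably the content of Lemma~\ref{lem:Es-controlled}: for every $\delta\in(0,1)$,
\[
\mathcal E_s(u,u)\le \frac{C}{\ln(1/\delta)}\,[u]^2_{s+\Log,+}+C\delta^{-2s}\|u\|^2_{L^2(\Omega)}.
\]
This is the key tool for part (ii).

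For part (ii), the continuity of the embedding for $p\in[1,2_s^*]$ follows from (i) and the fractional Sobolev embedding of $\mathcal H^s_0(\Omega)$. For compactness, let $(u_k)$ be bounded in $\mathcal H^{s+\Log}_0(\Omega)$. By (i) it is bounded in $\mathcal H^s_0(\Omega)$, so the classical compact embedding gives a subsequence converging in $L^2(\Omega)$ (indeed in every $L^q$ with $q<2_s^*$).

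Set $v=u_k-u_j$, which satisfies $[v]_{s+\Log,+}\le 2M$. Combining the Sobolev inequality with the refined estimate gives
\[
\|v\|^2_{L^{2_s^*}}\le C\,\mathcal E_s(v,v)\le \frac{CM^2}{\ln(1/\delta)}+C\delta^{-2s}\|v\|^2_{L^2}.
\]
I first choose $\delta$ small to make the first term less than $\eta$, and then take $k,j$ large to make the second term small. So $(u_k)$ is Cauchy in $L^{2_s^*}$, and by Hölder's inequality on the bounded domain it is Cauchy in every $L^p$ with $p\le 2_s^*$.

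I expect the main obstacle to be the uniform $\delta$-splitting estimate. In particular, the cross terms where one point lies outside $\Omega$ need checking so that the large-distance part is bounded by $\|u\|_{L^2(\Omega)}$ alone. This should hold because $u=0$ off $\Omega$ and the kernel tail $|z|^{-n-2s}\mathbf 1_{|z|\ge\delta}$ is integrable.
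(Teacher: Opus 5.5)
Your proof is correct. For the second inclusion in (i) you argue as the paper does: split at a fixed radius, then use Proposition~\ref{prop:Poincare-Hslog-plus11}. Your $\delta$-splitting estimate is exactly Lemma~\ref{lem:Es-controlled}. The differences lie elsewhere.

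\textbf{First inclusion.} You use the global bound $r^{\varepsilon}\ln(1/r)\le (e\varepsilon)^{-1}$ on $(0,1)$. This dominates ${\bf k}_{s+\Log,+}$ by a multiple of the $\mathcal H^{s+\varepsilon}$ kernel on its whole support, and gives
\[
[u]^2_{s+\Log,+}\le \frac{c_{n,s}}{e\varepsilon}\,\|u\|^2_{\mathcal H^{s+\varepsilon}_0(\Omega)}
\]
with no remainder term. The paper instead compares kernels only on a small ball $\{|z|<r_\varepsilon\}$. It bounds the rest by $\|u\|^2_{L^2(\Omega)}$ and then absorbs that term using a Poincar\'e inequality in $\mathcal H^{s+\varepsilon}_0(\Omega)$. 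Your version is more direct.

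\textbf{Compactness in (ii).} The paper localizes. It multiplies by cutoffs $\eta$ supported in balls $B_{2r}(x_0)\subset\Omega$, proves a local analogue of Lemma~\ref{lem:Es-controlled} for $\eta u$, applies the Sobolev inequality to $\eta u$, and sums over a finite cover. You apply the Sobolev inequality and Lemma~\ref{lem:Es-controlled} globally to the differences $u_k-u_j$. This is legitimate because these functions vanish outside $\Omega$ and the lemma is already a global statement.

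Your route is shorter and avoids the covering bookkeeping. In particular, the paper's constraint $r<\frac12\operatorname{dist}(x_0,\partial\Omega)$ prevents balls of a fixed radius from covering a neighbourhood of $\partial\Omega$. This has to be repaired by allowing arbitrary centres, which is harmless since $u=0$ off $\Omega$. The localized version would only add value if local information were needed, for instance in concentration-type arguments, and it is not needed here.
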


	In particular,  we also study the existence, uniqueness, and regularity properties of weak solutions $u$ to the nonlocal Poisson problem
	\begin{equation}\label{eq:poisson}
		\begin{alignedat}{2}
			(-\Delta)^{s+\Log}u + V(x)\,u &= f \qquad &&\text{in }\Omega,\\
			u &= 0 \qquad &&\text{in }\Omega^c,
		\end{alignedat}
	\end{equation}
	driven by the fractional--logarithmic operator $(-\Delta)^{s+\Log}$, where
	$V\in L^{\infty}(\Omega)$.
	\bigskip

    Fix $r\in(0,e^{-|b_{n,s}|/2})$ and set
		\begin{equation}\label{eq:alpha_r_def}
			\alpha_r:=1-\frac{|b_{n,s}|}{2\,\ln(1/r)}>0.
		\end{equation}
        Denote \(\operatorname{essinf}_{\Omega} V\) the essential infimum of \(V\) on \(\Omega\), i.e.
\[
\operatorname{essinf}_{\Omega} V=\sup\{a\in\mathbb R:\ V(x)\ge a \ \text{for a.e. }x\in\Omega\}.
\]

We now apply the Lax--Milgram theorem to establish existence (and uniqueness) of weak solutions to the Poisson problem, and then use a Moser iteration scheme to derive \(L^\infty\)-regularity.

\begin{theorem}\label{thm:LM-general-b-refined}
Let \(\Omega\subset\R^n\) be bounded, and consider the Poisson problem \eqref{eq:poisson}.

\begin{enumerate}[label=\normalfont\textup{(\roman*)}]
\item Assume that
\begin{equation}\label{eq:w-cond-refined}
\operatorname*{ess\,inf}_{\Omega} V
\ \ge\
\frac{c_{n,s}|\mathbb S^{n-1}|}{s^2}
+\frac{|b_{n,s}|\,c_{n,s}}{s}\,|\mathbb S^{n-1}|\,r^{-2s}.
\end{equation}
Then \eqref{eq:poisson} admits a unique weak solution $u\in \mathcal H^{s+\Log}_0(\Omega).$
Moreover, the solution satisfies
\[
\|u\|_{\mathcal H^{s+\Log}_0(\Omega)}
\le \frac{1}{\alpha_r}\,\|f\|_{(\mathcal H^{s+\Log}_0(\Omega))^*},
\]
where \(c_{n,s}, b_{n,s}, \alpha_r\) are defined in \eqref{norm const1}, \eqref{eq:const-bns} and \eqref{eq:alpha_r_def} respectively.

\item Let \(u\) be a weak solution of \eqref{eq:poisson}. If $f\in L^{q}(\Omega)$ for some $q>\frac{n}{2s},$ then $u\in L^\infty(\Omega).$
\end{enumerate}
\end{theorem}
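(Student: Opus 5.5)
For part (i) we apply Lax--Milgram on the Hilbert space $\mathcal H^{s+\Log}_0(\Omega)$, normed by $[u]_{s+\Log,+}=\sqrt{\mathcal E_+(u,u)}$, to the bilinear form
\[
B(u,w):=\tfrac12\mathcal E_{s+\Log}(u,w)+\int_\Omega V\,u\,w\,dx
\]
(with the normalisation of $\mathcal E_{s+\Log}$ matching the weak formulation of $(-\Delta)^{s+\Log}$ via the kernel representation of Theorem~\ref{pr 1.1}; constants are adjusted accordingly). The right-hand side $w\mapsto\langle f,w\rangle$ is bounded by assumption.

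\textbf{Continuity.} By Cauchy--Schwarz, $|\mathcal E_+(u,w)|\le[u]_{s+\Log,+}[w]_{s+\Log,+}$. The negative part is controlled by Proposition~\ref{embedding-1}(i) together with the Poincar\'e inequality of Proposition~\ref{prop:Poincare-Hslog-plus11}. For the term $\mathcal E_s$, Proposition~\ref{embedding}(i) gives $\mathcal E_s(u,u)\le C[u]_{s+\Log,+}^2$. The potential term is handled by $\|V\|_\infty$ and Poincar\'e.

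\textbf{Coercivity.} This is the heart of the argument. Split $\mathcal E_s(u,u)$ into the region $|x-y|<r$ and its complement.
\begin{itemize}
\item On $|x-y|<r$ we have $-\ln|x-y|\ge\ln(1/r)$, so
\[
c_{n,s}\iint_{|x-y|<r}\frac{(u(x)-u(y))^2}{|x-y|^{n+2s}}\le\frac{1}{\ln(1/r)}\,\mathcal E_+(u,u).
\]
\item On $|x-y|\ge r$ we use $(u(x)-u(y))^2\le 2u(x)^2+2u(y)^2$ and integrate the kernel, giving at most $\frac{2}{s}\,|\mathbb S^{n-1}|\,r^{-2s}\|u\|_2^2$ (times $c_{n,s}$).
\end{itemize}
Hence
\[
\frac{|b_{n,s}|c_{n,s}}{2}\,\mathcal E_s(u,u)\le\frac{|b_{n,s}|}{2\ln(1/r)}\,\mathcal E_+(u,u)+\frac{|b_{n,s}|c_{n,s}}{s}\,|\mathbb S^{n-1}|\,r^{-2s}\|u\|_2^2.
\]
Combining this with $\mathcal E_-(u,u)\le \frac{c_{n,s}|\mathbb S^{n-1}|}{s^2}\|u\|_2^2$ (Proposition~\ref{embedding-1}(i)) yields
\[
\mathcal E_{s+\Log}(u,u)+\int V u^2\ \ge\ \alpha_r\,\mathcal E_+(u,u)+\Bigl(\operatorname{essinf}V-\tfrac{c_{n,s}|\mathbb S^{n-1}|}{s^2}-\tfrac{|b_{n,s}|c_{n,s}}{s}|\mathbb S^{n-1}|r^{-2s}\Bigr)\|u\|_2^2 .
\]
The bracket is nonnegative by \eqref{eq:w-cond-refined}. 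Lax--Milgram then gives existence, uniqueness and the bound $\|u\|\le\alpha_r^{-1}\|f\|_*$. The main bookkeeping risk is the factor $2$ versus $1$ between the form and the operator; one fixes the convention so that the constants match \eqref{eq:w-cond-refined}.

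For part (ii) we run a De Giorgi/Moser scheme. Test the equation with $(u-k)_+$ for $k\ge0$ (the case of $u_-$ is symmetric). The key inequality is a Stroock--Varopoulos-type bound $(a-b)((a-k)_+-(b-k)_+)\ge((a-k)_+-(b-k)_+)^2$, valid for the nonnegative kernel parts $\mathcal E_+$ and the $b_{n,s}$-term when $b_{n,s}\ge0$. The sign-indefinite parts ($\mathcal E_-$, and the $b_{n,s}$-term when $b_{n,s}<0$) are bounded kernels away from the diagonal or are absorbed as above. Their contributions are estimated by $C(\|u\|_{L^1}+k)\,|A_k|+C\|(u-k)_+\|_2^2$, where $A_k=\{u>k\}$, using integrability of the kernel off the diagonal together with the splitting trick at scale $r$. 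This yields
\[
\alpha\,[(u-k)_+]_{s+\Log,+}^2\le C\int_{A_k}(|f|+|V||u|+\dots)(u-k)_++\dots
\]
By Proposition~\ref{embedding}, $[\cdot]_{s+\Log,+}$ controls the $L^{2_s^*}$ norm, and Hölder with $q>\frac n{2s}$ gives
\[
\|(u-h)_+\|_{2}^2\le \frac{C}{(h-k)^{\beta}}\,|A_k|^{1+\delta},\qquad \delta>0.
\]
The standard Stampacchia iteration lemma then gives $u\le k_0<\infty$. The main obstacle is controlling the nonlocal negative-part and lower-order cross terms uniformly in $k$; the plan is to handle them with the fact that $\mathbf k_{s+\Log,-}$ is integrable (supported on $|z|\ge1$) and with the $r$-splitting, so that they appear only as $L^2$ and level-set-measure terms.
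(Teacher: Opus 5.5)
Your part (i) is the paper's argument: Lax--Milgram for $\mathcal E_{s+\Log}(u,\varphi)+\int_\Omega Vu\varphi\,dx$, continuity via Cauchy--Schwarz and Proposition~\ref{embedding}(i), and coercivity from the $r$-splitting of $\mathcal E_s$ (Lemma~\ref{lem:Es-controlled}) combined with Proposition~\ref{embedding-1}(i). Drop the $\tfrac12$ in $B$. Since ${\bf K}_{s+\Log}=c_{n,s}b_{n,s}|z|^{-n-2s}+2{\bf k}_{s+\Log,+}-2{\bf k}_{s+\Log,-}$, the form \eqref{enerln} already equals $\tfrac12\iint(u(x)-u(y))(w(x)-w(y)){\bf K}_{s+\Log}(x-y)\,dx\,dy$, and this is the form Definition~\ref{def:weak-solution} uses. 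With the extra $\tfrac12$ you would be solving a different problem, and the Lax--Milgram constant would become $2/\alpha_r$. Your coercivity display already uses $\mathcal E_{s+\Log}$, so nothing else changes.

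For part (ii) you take a genuinely different route. The paper runs a Moser iteration:
\begin{itemize}
\item it reduces to $u\ge0$ and tests with convex Lipschitz approximations $\varphi_R(u)$ of $u^\alpha$;
\item it uses a pointwise convexity inequality for the operator with kernel $|{\bf K}_{s+\Log}|$ on a ball $B_\eta(x)$ where ${\bf K}_{s+\Log}>0$;
\item it bounds the far field by $C\alpha\|u\|_{L^{2\alpha}}^{2\alpha}$ via Lemma~\ref{gjxlem};
\item it iterates $L^{\alpha_i p}$ bounds with $\alpha_i=(p/k)^i$ and $k=\frac{2q}{q-1}<2_s^*$, which is exactly where $q>\frac n{2s}$ enters.
\end{itemize}
Your De Giorgi--Stampacchia truncation stays inside the weak formulation. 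The test function $(u-k)_+$ is admissible because $t\mapsto(t-k)_+$ is $1$-Lipschitz and vanishes at $0$. Your route handles sign-changing $u$ directly and never uses the equation pointwise, which the paper's convexity step tacitly needs. The Moser route, in exchange, gives quantitative $L^p$ control at each stage.

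You should state explicitly why your treatment of $b_{n,s}<0$ works. With $\phi=(u-k)_+$, the integrand $(u(x)-u(y))(\phi(x)-\phi(y))$ is pointwise nonnegative. So the near-diagonal bound of Lemma~\ref{lem:Es-controlled} passes from the square $\mathcal E_s(u,u)$ to the mixed form, giving $\mathcal E_+(u,\phi)+\frac{b_{n,s}c_{n,s}}{2}\mathcal E_s(u,\phi)\ge\alpha_r[\phi]_{s+\Log,+}^2-(\text{far field})$.

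One step needs more than ``the standard Stampacchia lemma.'' The far-field pieces are $\mathcal E_-$, the part of $\mathcal E_s$ on $\{|x-y|\ge r\}$, and a possibly negative $V$. They behave like zero-order terms and produce $C\bigl(\|\phi\|_{L^2}^2+(k+\|u\|_{L^1})\|\phi\|_{L^1}\bigr)$, so the constant in your level-set inequality grows with $k$. The fix is as follows.
\begin{itemize}
\item Absorb $\|\phi\|_{L^2}^2\le|A_k|^{2s/n}\|\phi\|_{L^{2_s^*}}^2$ into the left side once $|A_k|$ is small.
\item Iterate on bounded levels $k_j=d(2-2^{-j})$ with $d\ge\|f\|_{L^q}+\|u\|_{L^1}$. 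This gives $|A_{k_{j+1}}|\le C\,2^{2_s^*j}|A_{k_j}|^{\beta}$ with $\beta=2_s^*(1-\frac1q)-1>1$, which is equivalent to $q>\frac n{2s}$.
\item The required smallness of $|A_{k_0}|=|A_d|\le\|u\|_{L^2}^2/d^2$ comes from choosing $d$ large.
\end{itemize}
With this modification your argument gives $u\le 2d$, and applying it to $-u$ yields $u\in L^\infty(\Omega)$.
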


	Our next aim is to study the associated Dirichlet eigenvalue problem
	\begin{equation}\label{eq:eigen}
		\left\{
		\begin{aligned}
			(-\Delta)^{s+\Log} u &= \lambda\,u \qquad &&\text{in }\Omega,\\
			u&=0 \qquad &&\text{in }\R^n\setminus\Omega,
		\end{aligned}
		\right.
	\end{equation}
	where $\Omega$ is a bounded Lipschitz domain in $\R^n$.
	
	We say that $u\in \mathcal H^{s+\Log}_{0}(\Omega)$ is an \emph{eigenfunction}
	of~\eqref{eq:eigen} associated with the eigenvalue $\lambda$ if
	\[
	\mathcal E_{s+\Log}(u,\phi)
	=\lambda\int_{\Omega}u\,\phi\,dx
	\qquad\text{for all }\phi\in \mathcal H^{s+\Log}_{0}(\Omega).
	\]
	
\begin{theorem}\label{thm:eigen-spectrum}
Let $\Omega$ be a bounded Lipschitz domain in $\R^n$. Then
problem~\eqref{eq:eigen} admits a nondecreasing sequence of real eigenvalues
\[
\lambda_1^{s+\Log}(\Omega)\le \lambda_2^{s+\Log}(\Omega)\le \cdots \le \lambda_k^{s+\Log}(\Omega)\le \cdots,
\]
with corresponding eigenfunctions $\{\xi_k\}_{k\in\N}\subset \mathcal H^{s+\Log}_{0}(\Omega)$ such that:
\begin{enumerate}[label=\normalfont\textup{(\roman*)}]
\item For every $k\in\N$,
\[
\lambda_{k}^{s+\Log}(\Omega)
=\min\Bigl\{\mathcal E_{s+\Log}(u,u)\ :\ u\in \mathcal H_k(\Omega),\ \|u\|_{L^2(\Omega)}=1\Bigr\},
\]
where $\mathcal H_1(\Omega):=\mathcal H^{s+\Log}_0(\Omega)$ and, for $k\ge2$,
\[
\mathcal H_k(\Omega)
:=\Bigl\{u\in\mathcal H^{s+\Log}_0(\Omega)\ :\ \int_\Omega u\,\xi_i\,dx=0
\ \text{for }i=1,\dots,k-1\Bigr\}.
\]

\item  One has $\lambda_k^{s+\Log}(\Omega)\to+\infty$ as $k\to\infty$.

\item  The family $\{\xi_k\}_{k\in\N}$ forms an orthonormal basis of $L^2(\Omega)$.

\item Assume that $\operatorname{diam}(\Omega)<e^{-1/(2s)}$ and $b_{n,s}\ge0$. Then
\[
\lambda_1^{s+\Log}(\Omega)>0,
\]
and the first eigenfunction $\xi_1$ can be chosen nonnegative.
\end{enumerate}
\end{theorem}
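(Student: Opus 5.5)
The plan is to run the classical variational scheme for a semibounded, closed quadratic form with compact resolvent. The form is $\mathcal E_{s+\Log}$ on the Hilbert space $\mathcal H^{s+\Log}_0(\Omega)$ inside $L^2(\Omega)$.

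\textbf{Step 1: a G\aa rding inequality.} Fix $r\in(0,e^{-|b_{n,s}|/2})$ as in \eqref{eq:alpha_r_def}. Proposition~\ref{embedding-1}(i) bounds $\mathcal E_-$ by $\frac{c_{n,s}|\mathbb S^{n-1}|}{s^2}\|u\|_{L^2}^2$. For the term $\frac{b_{n,s}c_{n,s}}2\mathcal E_s$, split $\mathcal E_s$ into $|x-y|<r$ and $|x-y|\ge r$. On $\{|x-y|<r\}$ we have $-\ln|x-y|\ge\ln(1/r)$, so this near part is dominated by $\mathcal E_+/(2\ln(1/r))$ up to the factor $c_{n,s}$. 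The far part is at most $\frac{2|\mathbb S^{n-1}|}{s}r^{-2s}\|u\|_{L^2}^2$. Combining these gives
\[
\mathcal E_{s+\Log}(u,u)\ \ge\ \alpha_r\,\|u\|^2_{\mathcal H^{s+\Log}_0(\Omega)}-M\,\|u\|_{L^2(\Omega)}^2 ,
\]
with $M$ equal to the right-hand side of \eqref{eq:w-cond-refined}. This is precisely the estimate behind Theorem~\ref{thm:LM-general-b-refined}(i), and I will reuse it. Continuity of $\mathcal E_{s+\Log}$ on $\mathcal H^{s+\Log}_0(\Omega)$ follows from Proposition~\ref{embedding-1}(i) together with the embedding of Proposition~\ref{embedding}(i).

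\textbf{Step 2: the shifted form and the spectral theorem.} Consider $\mathcal E_M(u,v):=\mathcal E_{s+\Log}(u,v)+M\int_\Omega uv$. It is coercive, so by Lax--Milgram (Theorem~\ref{thm:LM-general-b-refined}(i) with $V\equiv M$) the solution operator $T:L^2(\Omega)\to\mathcal H^{s+\Log}_0(\Omega)$ is well defined and bounded. Composing with the compact embedding of Proposition~\ref{embedding}(ii) for $p=2$, $T$ becomes a compact, self-adjoint, positive operator on $L^2(\Omega)$; self-adjointness comes from the symmetry of the form. It is injective, because $Tf=0$ forces $\int fv=0$ for all $v$ in the dense subspace $\mathcal H^{s+\Log}_0(\Omega)\supset C_c^\infty(\Omega)$.

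The spectral theorem for compact self-adjoint operators then yields eigenvalues $\mu_k\downarrow0$ with an $L^2$-orthonormal basis of eigenfunctions $\xi_k$. Setting $\lambda_k:=\mu_k^{-1}-M$ gives a nondecreasing sequence with $\lambda_k\to+\infty$, which proves (ii) and (iii). Part (i) is the standard Rayleigh-quotient characterization. The minimum over $\mathcal H_k$ is attained by the direct method: minimizing sequences are bounded in $\mathcal H^{s+\Log}_0$ by Step 1 and converge strongly in $L^2$ by compactness. Weak lower semicontinuity holds because $\mathcal E_M$ is an equivalent Hilbert norm squared. Finally, expanding $u$ in the basis $\{\xi_k\}$ shows that the minimum equals $\lambda_k$.

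\textbf{Step 3: part (iv).} Under $\operatorname{diam}(\Omega)<e^{-1/(2s)}$ and $b_{n,s}\ge0$, Proposition~\ref{embedding-1}(ii) gives $\mathcal E_+-\mathcal E_-\ge0$, so
\[
\mathcal E_{s+\Log}(u,u)\ \ge\ \tfrac{b_{n,s}c_{n,s}}2\,\mathcal E_s(u,u)\ \ge\ 0 .
\]
Strict positivity of $\lambda_1$ needs more. When $b_{n,s}>0$, it follows from the fractional Poincar\'e inequality. For the borderline case $b_{n,s}=0$, I would use the strict version of \eqref{eq:Eplus-minus-lower}: the constant $\frac1s c_{n,s}|\mathbb S^{n-1}|\operatorname{diam}^{-2s}\bigl(\ln\frac1{\operatorname{diam}}-\frac1{2s}\bigr)$ is strictly positive because the diameter inequality is strict. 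Since $\lambda_1$ is attained by $\xi_1$, we get $\lambda_1>0$.

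For the sign of $\xi_1$, Proposition~\ref{embedding-1}(iii) gives $\mathcal E_{s+\Log}(|\xi_1|,|\xi_1|)\le\mathcal E_{s+\Log}(\xi_1,\xi_1)$ with $\||\xi_1|\|_{L^2}=1$. Hence $|\xi_1|$ is also a minimizer and therefore an eigenfunction for $\lambda_1$, so we may replace $\xi_1$ by $|\xi_1|$.

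\textbf{Main obstacle.} The delicate step is Step 1, namely absorbing a possibly negative $b_{n,s}\mathcal E_s$ into the logarithmically weighted $\mathcal E_+$. This is exactly where the logarithmic factor $\ln(1/r)$ is used to force $\alpha_r>0$. Everything else is standard once the compact embedding of Proposition~\ref{embedding}(ii) is available.
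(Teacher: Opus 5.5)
Your argument is correct, but it takes a different route from the paper. The paper follows the constrained-minimization scheme of \cite{CW18}:
\begin{itemize}
\item it minimizes $\mathcal E_{s+\Log}(u,u)$ over the $L^2$-unit sphere of the successive orthogonal complements $\mathcal H_k(\Omega)$, obtaining each $\xi_k$ as a minimizer together with a Lagrange multiplier;
\item it proves (ii) by contradiction, since a bounded $L^2$-orthonormal sequence has no $L^2$-convergent subsequence;
\item it proves (iii) by showing that a unit vector orthogonal to all $\xi_k$ would have energy below some $\lambda_{k_0}$.
\end{itemize}
You instead use the G\aa rding inequality to shift the form. You then build the compact, self-adjoint, injective solution operator of the coercive form $\mathcal E_M$. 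The spectral theorem gives the eigenvalues, their divergence and completeness in one stroke, and (i) follows from Parseval in the $\mathcal E_M$-inner product.

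Your route gives (ii) and (iii) essentially for free. It also makes explicit two ingredients that the minimization scheme needs:
\begin{itemize}
\item the bound $\mathcal E_{s+\Log}(u,u)\ge\alpha_r\|u\|^2_{\mathcal H^{s+\Log}_0(\Omega)}-M\|u\|^2_{L^2(\Omega)}$, which the paper states only in the looser form \eqref{eq:relative-functional-est};
\item the weak lower semicontinuity of $u\mapsto\mathcal E_{s+\Log}(u,u)$ on $\mathcal H^{s+\Log}_0(\Omega)$ even when $b_{n,s}<0$, which the paper asserts briefly and which is immediate for you because $\mathcal E_M$ is an equivalent Hilbert norm.
\end{itemize}
The paper's route avoids operator theory and produces the $\xi_k$ directly as minimizers, so choosing $\xi_1\ge0$ is automatic at the first step. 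In your setting, replacing $\xi_1$ by $|\xi_1|$ requires re-orthonormalizing the rest of the (finite-dimensional) first eigenspace if $\lambda_1$ is not simple. This is a harmless fix, but you should say it.

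Finally, the case split in your Step 3 is unnecessary. The strict lower bound of Proposition~\ref{embedding-1}(ii) already gives $\lambda_1^{s+\Log}(\Omega)\ge\frac1s c_{n,s}|\mathbb S^{n-1}|\operatorname{diam}(\Omega)^{-2s}\bigl(\ln\frac{1}{\operatorname{diam}(\Omega)}-\frac1{2s}\bigr)>0$ for every $b_{n,s}\ge0$.
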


	Finally, we derive the Weyl-type asymptotic law for the eigenvalues of the fractional--logarithmic operator.

	\begin{theorem}\label{thm:weyl-fraclog}
		Let $\Omega$ is a bounded Lipschitz domain in $\R^n$, and let
		$\{\lambda_k^{s+\Log}(\Omega)\}_{k\in\N}$ be the eigenvalues of~\eqref{eq:eigen}.
		For $\Lambda>0$ define the counting function
		\[
		\mathcal N(\Lambda):=\#\bigl\{k\in\N:\ \lambda_k^{s+\Log}(\Omega)\le \Lambda\bigr\}=\sum_{k\in \mathbb{N}}(\Lambda-\lambda_k)_+^0.
		\]
		Then
		\[
		\lim_{\Lambda\to+\infty}\,
		\mathcal N(\Lambda)\,\Lambda^{-\frac{n}{2s}}\bigl(\ln\Lambda\bigr)^{\frac{n}{2s}}
		=(2\pi)^{-n}\,s^{\frac{n}{2s}}\,\omega_n\,|\Omega|,
		\]
		and
		\[
		\lim_{k\to+\infty}
		\frac{\lambda_k^{s+\Log}(\Omega)\,k^{-\frac{2s}{n}}}{\ln k}
		=\frac{2}{n}\,(2\pi)^{2s}\bigl(\omega_n|\Omega|\bigr)^{-\frac{2s}{n}},
		\]
		where $\omega_n$ denotes the volume of the unit ball in $\R^n$.
	\end{theorem}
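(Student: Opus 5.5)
We follow the standard route for Weyl laws of nonlocal operators with symbol $a(\xi)$: the counting function should behave like the phase-space volume $(2\pi)^{-n}|\Omega|\,|\{\xi: a(\xi)\le\Lambda\}|$, here with $a(\xi)=|\xi|^{2s}\ln(|\xi|^2)$. We first record the elementary asymptotics of this volume. For large $\Lambda$, the sublevel set $\{a\le\Lambda\}$ is, up to a bounded set, the ball of radius $R(\Lambda)$ solving $R^{2s}\cdot 2\ln R=\Lambda$. Taking logarithms gives $2s\ln R\sim\ln\Lambda$, and hence
\[
R(\Lambda)^{2s}\sim \frac{\Lambda}{2\ln R}\sim \frac{s\Lambda}{\ln\Lambda}.
\]
Thus $|\{a\le\Lambda\}|\sim \omega_n (s\Lambda/\ln\Lambda)^{n/(2s)}$, which gives the first limit. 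The second limit then follows by inverting $k\sim (2\pi)^{-n}\omega_n|\Omega|\,s^{n/2s}\lambda_k^{n/2s}(\ln\lambda_k)^{-n/2s}$ using $\ln\lambda_k\sim \frac{2s}{n}\ln k$. This yields $\lambda_k\sim \frac{1}{s}\cdot\frac{2s}{n}\ln k\,\bigl((2\pi)^n k/(\omega_n|\Omega|)\bigr)^{2s/n}$, which is exactly the stated constant.

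The spectral part consists of upper and lower bounds on $\mathcal N(\Lambda)$ via the min–max principle from Theorem~\ref{thm:eigen-spectrum}(i) together with the Fourier representation of the form. By Theorem~\ref{pr 1.1}(iii) and Proposition~\ref{prop:Hslog-vs-form}, $\mathcal E_{s+\Log}(u,u)=\int a(\xi)|\widehat u|^2\,d\xi$ on $\mathcal H^{s+\Log}_0(\Omega)$, up to the normalization fixed by \eqref{enerln}. Since $a$ is bounded below by $-C_s$, the form is semibounded and min–max applies, with $\mathcal N(\Lambda)$ equal to the maximal dimension of a subspace on which $\mathcal E_{s+\Log}\le\Lambda\|u\|^2$.

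\emph{Upper bound.} We use the Berezin–Li–Yau argument. With $\xi_k$ orthonormal eigenfunctions and $f(\xi)=\sum_{\lambda_k\le\Lambda}|\widehat{\xi_k}(\xi)|^2$, Bessel's inequality applied to $e^{ix\cdot\xi}\mathbf 1_\Omega$ gives $0\le f\le (2\pi)^{-n}|\Omega|$. We also have $\int f=\mathcal N(\Lambda)$ and $\int a f\le \Lambda\mathcal N(\Lambda)$. The bathtub principle then gives
\[
\int (a-\Lambda)_- \,d\xi\cdot(2\pi)^{-n}|\Omega|\ \ge\ \sum_k(\Lambda-\lambda_k)_+.
\]
Applying this at level $(1+\delta)\Lambda$ and comparing with $\delta\Lambda\,\mathcal N(\Lambda)$ gives
\[
\limsup \mathcal N(\Lambda)\,\Lambda^{-n/2s}(\ln\Lambda)^{n/2s}\le (1+\delta)^{n/2s}\delta^{-1}\cdot(\text{Riesz-mean constant}).
\]
Optimizing in $\delta$ does not give sharpness on its own. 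We therefore instead use $f\le (2\pi)^{-n}|\Omega|$ together with the slow variation of the logarithm: a Riesz mean of order $\gamma\to0$ recovers the sharp constant because the volume function $V(\Lambda)=|\{a\le\Lambda\}|$ is regularly varying of index $n/2s$. Concretely, $V((1+\delta)\Lambda)/V(\Lambda)\to(1+\delta)^{n/2s}$ and we let $\delta\to0$ after a Tauberian step. This is the standard Laptev/Frank route.

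\emph{Lower bound.} We use trial functions. Cover $\Omega$ by disjoint cubes $Q_j$ of side $\ell$ compactly contained in $\Omega$, and take $u=\chi_j(x)e^{ix\cdot\xi}$ with $\chi_j$ a smooth cutoff in $Q_j$ and $\xi$ ranging over a lattice in $\{a\le(1-\delta)\Lambda\}$. The function $a$ has derivatives of order $|\xi|^{2s-1}\ln|\xi|$, which are small relative to $a$ at scale $|\xi|\sim R$. Hence the localization errors $\int a|\widehat{\chi_j}(\cdot-\xi)|^2-a(\xi)\|\chi_j\|^2$ are $o(\Lambda)$ uniformly as $\Lambda\to\infty$ for fixed $\ell$. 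Almost-orthogonality then yields a subspace of dimension $(2\pi)^{-n}|\Omega_\ell|V((1-\delta)\Lambda)(1-o(1))$ on which the Rayleigh quotient is at most $\Lambda$. Letting $\ell\to0$ and then $\delta\to0$ gives the matching lower bound.

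The main obstacle is making the error terms uniform. In the lower bound one must handle the commutator of $a(D)$ with cutoffs, where the kernel ${\bf K}_{s+\Log}$ of Theorem~\ref{pr 1.1}(ii) changes sign. In the upper bound, the slowly varying $\ln$ factor must not spoil the sharp constant. We expect regular variation of $V$ to be the key tool that absorbs both issues.
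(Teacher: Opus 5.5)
Your phase-space computation and the final inversion are right. In particular, $\ln\lambda_k\sim\frac{2s}{n}\ln k$ is what produces the factor $\frac{2}{n}$; the paper's intermediate claim $\ln\lambda_k/\ln k\to1$ is off, although its final constant agrees with yours. Your Berezin--Li--Yau estimate $\mathcal S_0(\Lambda):=\sum_k(\Lambda-\lambda_k)_+\le(2\pi)^{-n}|\Omega|\int(\Lambda-a)_+\,d\xi$ is exactly the upper bound the paper uses. The gaps are in turning this into sharp two-sided bounds for $\mathcal N$.

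\emph{Upper bound:} since $\mathcal S_0(\Lambda)=\int_{-\infty}^{\Lambda}\mathcal N(\mu)\,d\mu$, a one-sided bound on $\mathcal S_0$ only yields the non-sharp constant you found. The remedy you sketch is not available:
no bound with the sharp semiclassical constant is known for Riesz means of order $\gamma<1$ (for $\gamma=0$ it would be a P\'olya-type inequality, open in general even for the Dirichlet Laplacian). Nor can regular variation of $V$ replace the lower bound for $\mathcal S_0(\Lambda)$ that has to be subtracted. What is needed is the two-sided asymptotic $\mathcal S_0(\Lambda)\sim(2\pi)^{-n}|\Omega|\int(\Lambda-a)_+\,d\xi$, followed by the differencing inequalities $\mathcal S_0(\Lambda)-\mathcal S_0(\Lambda-\kappa)\le\kappa\,\mathcal N(\Lambda)\le\mathcal S_0(\Lambda+\kappa)-\mathcal S_0(\Lambda)$ with $\kappa=\sqrt{\epsilon}\,\Lambda$. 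This is the paper's Tauberian step, and regular variation (index $1+\frac{n}{2s}$ of $\Lambda\mapsto\int(\Lambda-a)_+\,d\xi$) is what makes the difference quotients converge to the sharp constant. In your scheme the lower half would have to come from integrating a sharp lower bound for $\mathcal N$, which you do not set up.

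\emph{Lower bound:} the sentence ``almost-orthogonality then yields a subspace \dots\ on which the Rayleigh quotient is at most $\Lambda$'' is the whole difficulty, and it is not justified. Min--max requires $\mathcal E_{s+\Log}(u,u)\le\Lambda\|u\|_{L^2}^2$ for \emph{every} $u$ in the span, not only for each generator $\chi_je^{ix\cdot\xi}$. The number of generators per cube grows like $V(\Lambda)\to\infty$. Their Gram matrix (for the natural lattice $\frac{2\pi}{\ell}\mathbb Z^n$, a Toeplitz matrix with symbol $\chi_j^2$) has eigenvalues accumulating at $0$. So some combinations $u=\chi_j\phi$ have $\|u\|_{L^2}\ll\|\phi\|_{L^2(Q_j)}$, while the high-frequency tails of $\widehat u$ are naturally bounded only in terms of $\|\phi\|_{L^2(Q_j)}$. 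Diagonal errors of size $o(\Lambda)$ therefore give no control of the form on the span; repairing this needs Szeg\H{o}/Landau--Widom-type eigenvalue distribution results.

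The paper sidesteps the issue by bounding the \emph{order-one} Riesz mean from below with coherent states $F_{p,q}(x)=e^{ip\cdot x}g(x-q)$, $q\in\Omega_\delta$. Since $\mathcal S_0(\Lambda)\ge\mathrm{Tr}\,\gamma\bigl(\Lambda-(-\Delta)^{s+\Log}\bigr)$ for every admissible $0\le\gamma\le1$, and $(2\pi)^{-n}\iint|F_{p,q}\rangle\langle F_{p,q}|\,dp\,dq\le1$, only the diagonal estimate $\langle F_{p,q},(-\Delta)^{s+\Log}F_{p,q}\rangle=\int a(p+\eta)|\widehat g(\eta)|^2\,d\eta\le|p|^{2s}\ln|p|^2+C$ is needed. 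That is essentially the localization estimate you already have. The sign-changing kernel never enters, because everything is computed on the Fourier side.

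Using this Riesz-mean lower bound in place of your trial subspace closes both gaps at once. The differencing step then gives the limit for $\mathcal N$, and your inversion gives the limit for $\lambda_k$.
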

	
	\begin{remark}
		For the fractional Laplacian $(-\Delta)^s$ on a bounded Lipschitz domain
		$\Omega\subset\R^n$, the eigenvalues $\{\lambda_{k}^s(\Omega)\}_{k\in\N}$
		satisfy the Weyl asymptotics
		\begin{equation}\label{eq:frac-Weyl}
			\lim_{k\to\infty}\lambda_{k}^s(\Omega)\,k^{-\frac{2s}{n}}
			=(2\pi)^{2s}\bigl(\omega_n|\Omega|\bigr)^{-\frac{2s}{n}}.
		\end{equation}
		For the logarithmic Laplacian $\Log(-\Delta)$, the eigenvalues
		$\{\lambda^{\ln}_k(\Omega)\}_{k\in\N}$ satisfy
		\begin{equation}\label{eq:log-Weyl}
			\lim_{k\to\infty}\frac{\lambda^{\ln}_k(\Omega)}{\ln k}=\frac{2}{n}.
		\end{equation}
		Consequently, the operator $(-\Delta)^{s+\Log}$ interpolates between the
		fractional and logarithmic regimes, and the asymptotic law in
		Theorem~\ref{thm:weyl-fraclog} is precisely the product of the fractional
		Weyl constant in~\eqref{eq:frac-Weyl} and the logarithmic growth factor
		in~\eqref{eq:log-Weyl}.
	\end{remark}
	
	The remainder of this paper is organized as follows. In Section~2, we present and justify several equivalent formulations of the fractional-logarithmic operator, including the singular-integral representation, the Fourier-multiplier representation, the spectral-calculus definition, and the extension characterization. In Section~3, we first establish key properties of the underlying function spaces and energy forms, and then prove existence and regularity results for the Poisson problem. In Section~4, we study the Dirichlet eigenvalue problem, including qualitative properties of eigenvalues and the Weyl-type asymptotic law.

	\setcounter{equation}{0}
	\section{Representation Formulas and Basic Properties}

    In this section, we provide proofs of the various characterizations of the fractional-logarithmic operator, including the singular-integral representation, the Fourier-multiplier representation, the spectral-calculus definition, and the extension characterization.

	\subsection{Equivalent Representations}

We begin with the proof of Proposition~\ref{prop:dt-fraclap}, which establishes the existence of the derivative of the fractional Laplacian with respect to the order at a general \(s\in(0,1)\).

\begin{proof}[\bf Proof of Proposition \ref{prop:dt-fraclap}.]
Since $u\in C_c^2(\R^n)$, we have the Taylor expansion
\[
u(y)=u(x)+\nabla u(x)\cdot (y-x)+O(|x-y|^2)\quad \text{as}\qquad y\to x
\]
Hence
\[
u(x)-u(y)= -\nabla u(x)\cdot (y-x)+O(|x-y|^2).
\]
In the principal value sense, the contribution of the linear term vanishes by symmetry:
for every $\varepsilon>0$,
\[
\int_{|y-x|>\varepsilon}\frac{\nabla u(x)\cdot (x-y)}{|x-y|^{n+2t}}\,dy=0.
\]
Therefore, near $y=x$ the integrand behaves like
\[
\frac{O(|x-y|^2)}{|x-y|^{n+2t}},
\]
which is locally integrable for all $t\in(0,1)$.
This shows that $(-\Delta)^t u(x)$ is well-defined for every $t\in(0,1)$. 

Fix $x\in\R^n$ and write
\[
I(t):=\PV\int_{\R^n}\frac{u(x)-u(y)}{|x-y|^{n+2t}}\,dy,
\qquad\text{so that}\qquad
(-\Delta)^t u(x)=c_{n,t}I(t).
\]
 Moreover,
\[
\partial_t\Big(|x-y|^{-n-2t}\Big)=-2\ln|x-y|\,|x-y|^{-n-2t}.
\]
Splitting the integral into $B_1(x)$ and $\R^n\setminus B_1(x)$, the far-field part is
absolutely integrable for all $t\in(0,1)$ since $u$ has compact support.
For the near-field part, using the Taylor expansion
$u(x)-u(y)=-\nabla u(x)\cdot (y-x)+O(|x-y|^2)$ as $y\to x$ and the symmetry of the principal
value, the linear term cancels and the remaining integrand behaves like
$|x-y|^{-n-2t+2}|\ln|x-y||$, which is integrable near $y=x$ for every $t\in(0,1)$.
Hence $I$ is $C^1$ and
\[
I'(s)=\PV\int_{\R^n}\frac{u(x)-u(y)}{|x-y|^{n+2s}}\bigl(-2\ln|x-y|\bigr)\,dy.
\]
Finally,
\begin{align*}
    \left.\frac{d}{dt}(-\Delta)^t u(x)\right|_{t=s}
&=c_{n,s}'I(s)+c_{n,s}I'(s)\\
&=\frac{c_{n,s}'}{c_{n,s}}(-\Delta)^s u(x)
-2c_{n,s}\PV\int_{\R^n}\frac{(u(x)-u(y))\ln|x-y|}{|x-y|^{n+2s}}\,dy,
\end{align*}
which is \eqref{eq:dt-fraclap}. Recall that
\[
c_{n,s}=2^{2s}\pi^{-\frac n2}s\,\frac{\Gamma\!\left(\frac{n+2s}{2}\right)}{\Gamma(1-s)}.
\]
Taking logarithms gives
\[
\ln c_{n,s}=2s\ln 2-\frac n2\ln\pi+\ln s+\ln\Gamma\!\left(\frac{n+2s}{2}\right)-\ln\Gamma(1-s).
\]
Differentiating with respect to $s$ and using $\psi=\Gamma'/\Gamma$ (the digamma function), we obtain
\[
b_{n,s}=\frac{c_{n,s}'}{c_{n,s}}
=\frac{d}{ds}\ln c_{n,s}
=2\ln 2+\frac{1}{s}+\psi\!\left(\frac{n+2s}{2}\right)+\psi(1-s).
\]
\end{proof}

We now present proofs of further representations of the fractional-logarithmic operator.

\begin{proof}[\bf Proof of Theorem \ref{pr 1.1}.]
    Parts (i) and (ii) follow directly from Proposition~\ref{prop:dt-fraclap}.

(iii) It follows by \cite[Chapter 3]{EGE} that 
	$$ c_{n,s}=\Big(\int_{\R^n} \frac{1-\cos\zeta_1}{|\zeta|^{n+2s}} d\zeta\Big)^{-1}.  $$
	For $s\in(0,1)$, we obtain that 
	\begin{align*}
		& \Big|\frac{u(x+y)+u(x-y)-2u(x)}{|y|^{n+2s}}\big(-2\log|y|\big)\Big|
		\\[1.5mm]
        &\qquad \le 2\chi_{B_1(y)} |y|^{-n-2s+2} \big|\log|y| \big|
		\|u\|_{C^2(\R^n)}  \\[1.5mm]
        &\qquad\qquad +2\chi_{\R^n\setminus B_1(y)} |y|^{-n-2s} \big|\log|y|\big|  \big|u(x+y)+u(x-y)-2u(x)\big|, 
	\end{align*}
	then $\frac{u(x+y)+u(x-y)-2u(x)}{|y|^{n+2s}}\big(-2\log|y|\big)\in L^1(\R^{2n})$. 
	Consequently, by the Fubini-Tonelli theorem, we can exchange the integral in $y$ with the Fourier
	transform in $x$. Thus, we apply the Fourier transform in the variable $x$  and we obtain  
	\begin{align*}
		\cF\big(c_{n,s} \cL_1 u\big)(\xi)
		&=- \frac{c_{n,s}}2  \int_{\R^n} \frac{e^{i\xi\cdot y}+e^{-i\xi\cdot y}-2}{|y|^{n+2s}} \big(\!-2\log|y|\big) dy\,  \hat{u}(\xi)
		\\&=  c_{n,s} \int_{\R^n} \frac{1-\cos(\xi\cdot y)}{|y|^{n+2s}} \big(\!-2\log|y|\big) dy\,  \hat{u}(\xi),
	\end{align*}
	where 
	{\small \begin{align*}
		&\int_{\R^n} \frac{1-\cos(\xi\cdot y)}{|y|^{n+2s}} \big(\!\!-2\log|y|\big) dy\\=&\int_{\R^n} \frac{1-\cos(|\xi| y_1)}{|y|^{n+2s}} \big(\!-2\log|y|\big) dy
		\\=&|\xi|^{2s} \int_{\R^n} \frac{1-\cos z_1}{|z|^{n+2s}} \big(2\log|\xi|   -2\log|z|\big) dz
		\\=&|\xi|^{2s} \big(2\log|\xi| \big) \int_{\R^n} \frac{1-\cos z_1}{|z|^{n+2s}} dz 
		+
		|\xi|^{2s}\int_{\R^n} \frac{1-\cos z_1}{|z|^{n+2s}} \big( \!-2\log|z|\big)dz
		\\=&c_{n,s}^{-1}|\xi|^{2s} \big( 2\log|\xi| \big)+ |\xi|^{2s} \int_{\R^n} \frac{1-\cos z_1}{|z|^{n+2s}} \big( \!-2\log|z|\big)dz. 
	\end{align*}}
	Thus, we obtain that 
	\[\cF((-\Delta)^{s+\Log} u)(\xi)=\cF\big(c_{n,s} \cL_1 u  +\widehat b_{n,s}(-\Delta)^s u\big) (\xi) =|\xi|^{2s} \big(2\log|\xi| \big)  \hat{u}(\xi),\]
	where 
	$$\widehat b_{n,\s}=c_{n,s} \int_{\R^n} \frac{1-\cos\zeta_1}{|\zeta|^{n+2s}}\big(2\log(|\zeta|)\big) d\zeta. $$
    It is sufficient to prove that $\widehat b_{n,\s}=b_{n,s}.$

Split $\R^n=B_1(0)\cup(\R^n\setminus B_1(0))$. As $|\zeta|\to0$,
\[
1-\cos\zeta_1\sim \frac{\zeta_1^2}{2}\lesssim |\zeta|^2,
\]
hence
\[
\frac{1-\cos\zeta_1}{|\zeta|^{n+2s}}\;|\ln|\zeta||\;\lesssim\;|\zeta|^{-n-2s+2}\,|\ln|\zeta||,
\]
which is integrable on $B_1(0)$ for $s\in(0,1)$ since in polar coordinates it becomes
$\int_0^1 r^{1-2s}|\ln r|\,dr<\infty$.
As $|\zeta|\to\infty$, we have $0\le 1-\cos\zeta_1\le 2$, so
\[
\frac{1-\cos\zeta_1}{|\zeta|^{n+2s}}\,|\ln|\zeta||\;\lesssim\;|\zeta|^{-n-2s}\ln|\zeta|,
\]
which is integrable on $\R^n\setminus B_1(0)$ for any $s>0$. Therefore, by dominated
convergence we may differentiate under the integral sign and obtain
\[
\frac{d}{ds} c_{n,s}^{-1}=\int_{\R^n}(1-\cos\zeta_1)\,\frac{d}{ds}\big(|\zeta|^{-n-2s}\big)\,d\zeta
=\int_{\R^n}\frac{1-\cos\zeta_1}{|\zeta|^{n+2s}}\bigl(-2\ln|\zeta|\bigr)\,d\zeta=\widehat b_{n,s}.
\]
Hence $\widehat b_{n,s}=\frac{c_{n,s}'}{c_{n,s}}=b_{n,s}$.

(iv) Recall that, for a symbol $m(\xi)$, the corresponding operator $m(D)$ on
$\mathcal Z'(\R^n)$ is defined by duality via
\begin{equation}\label{eq:def-mD-Zprime}
\langle m(D)u,\varphi\rangle_{\mathcal Z',\mathcal Z}
:=\Big\langle u,\ \mathcal F^{-1}\!\big(m(\xi)\widehat\varphi(\xi)\big)\Big\rangle_{\mathcal Z',\mathcal Z},
\qquad u\in\mathcal Z'(\R^n),\ \varphi\in\mathcal Z(\R^n).
\end{equation}
In particular,
\[
(-\Delta)^s \equiv m_s(D)\ \text{with}\ m_s(\xi)=|\xi|^{2s},\qquad
\Log(-\Delta)\equiv \ell(D)\ \text{with}\ \ell(\xi)=\Log|\xi|^2,
\]
and
\[
(-\Delta)^{s+\Log}\equiv m_{s+\Log}(D)\ \text{with}\ m_{s+\Log}(\xi)=|\xi|^{2s}\Log|\xi|^2.
\]

Fix $u\in\mathcal Z'(\R^n)$ and $\varphi\in\mathcal Z(\R^n)$. Using \eqref{eq:def-mD-Zprime} twice and the fact
that multiplication of symbols is commutative, we compute
\begin{align*}
\big\langle (-\Delta)^s(\Log(-\Delta)u),\varphi\big\rangle_{\mathcal Z',\mathcal Z}
&=\big\langle \Log(-\Delta)u,\ \mathcal F^{-1}\!\big(|\xi|^{2s}\widehat\varphi(\xi)\big)\big\rangle_{\mathcal Z',\mathcal Z}\\
&=\Big\langle u,\ \mathcal F^{-1}\!\Big(\Log|\xi|^2\cdot |\xi|^{2s}\widehat\varphi(\xi)\Big)\Big\rangle_{\mathcal Z',\mathcal Z}\\
&=\Big\langle u,\ \mathcal F^{-1}\!\Big(|\xi|^{2s}\Log|\xi|^2\,\widehat\varphi(\xi)\Big)\Big\rangle_{\mathcal Z',\mathcal Z}\\
&=\big\langle (-\Delta)^{s+\Log}u,\varphi\big\rangle_{\mathcal Z',\mathcal Z}.
\end{align*}
An analogous computation yields
{\footnotesize
\[
\big\langle \Log(-\Delta)((-\Delta)^s u),\varphi\big\rangle_{\mathcal Z',\mathcal Z}
=\Big\langle u,\ \mathcal F^{-1}\!\Big(\Log|\xi|^2\cdot |\xi|^{2s}\widehat\varphi(\xi)\Big)\Big\rangle_{\mathcal Z',\mathcal Z}
=\big\langle (-\Delta)^{s+\Log}u,\varphi\big\rangle_{\mathcal Z',\mathcal Z}.
\]
}
Since $\varphi\in\mathcal Z(\R^n)$ is arbitrary, the identities hold in $\mathcal Z'(\R^n)$.

 (v)  Since $u\in C^2_c(\R^n)$ and $s\in(0,\frac14)$, then 
	for some $C_0>0$  
	$$|u(x)-u(y)|\leq C_0\min\{|x-y|,1\},$$
	and some $R>2$, supp $u\subset B_R(0)$.
	Let 
	$$d_{n,s}=2^{2\s}\pi^{-\frac n2} \frac{\Gamma(\frac{n+2\s}2)}{\Gamma(1-\s)}, $$
	which is bounded and 
	$$d_{n,s}\to\pi^{-\frac n2}  \Gamma(\frac{n}2)\quad {\rm as}\ \, s\to0^+.  $$
	Moreover
	\[	c_{n,s} =2^{2\s}\pi^{-\frac n2}\s\frac{\Gamma(\frac{n+2\s}2)}{\Gamma(1-\s)}= s d_{n,s}\]
	and
	\[c_{n,s} b_{n,\s}= d_{n,s}  \Big[1+\Big(\ln 4 +\psi(1-s)+\psi(\frac{n+2s}{2})\Big)s\Big]\to \pi^{-\frac n2}  \Gamma(\frac{n}2)=c_n\quad {\rm as}\ \, s\to0^+.\]

	\noindent{\it Part 1: estimates for $c_{n,s}\cL_1u(x)$: }
	Observe that 
	{\footnotesize\begin{align*} 
		c_{n,s}\cL_1u(x)&= c_{n,s} {\rm p.v.}\int_{B_1(x)} \frac{u(x)-u(y)}{|x-y|^{n+2s}}\big(\!-2\ln|x-y|\big)\, dy+  c_{n,s}  \int_{\R^n\setminus B_1(x)} \frac{-2\ln|x-y| }{|x-y|^{n+2s}} \, dy\,  u(x) 
		\\&\quad - c_{n,s} \int_{\R^n\setminus B_1(x)} \frac{ u(y)}{|x-y|^{n+2s}}\big(\!-2\ln|x-y|\big)\, dy,
	\end{align*}}
	where 
	{\small \begin{align*} 
		&\Big| c_{n,s} {\rm p.v.}\int_{B_1(x)} \frac{u(x)-u(y)}{|x-y|^{n+2s}}\big(\!-2\ln|x-y|\big)\, dy\Big|\\\le &   C_0c_{n,s} \int_{B_1(x)} \frac{1}{|x-y|^{n+2s-1}}\big(\!-2\ln|x-y|\big)\, dy 
		\\=&2 C_0c_{n,s} \left|\mathbb{S}^{n-1}\right|  (1-2s)^{-2}\to0\quad {\rm as}\ \  s\to0^+,
	\end{align*}}
    and
	{\small\begin{align*} 
		c_{n,s}  \int_{\R^n\setminus B_1(x)} \frac{-2\ln|x-y| }{|x-y|^{n+2s}} \, dy\,  u(x) 
		=-2 c_{n,s}   \left|\mathbb{S}^{n-1}\right| (2s)^{-2} \,  u(x).
	\end{align*}}
	Moreover,
	{\small\begin{align*} 
		&\Big| c_{n,s} \int_{\R^n\setminus B_1(x)} \frac{ u(y)}{|x-y|^{n+2s}}\big(\!-2\ln|x-y|\big)\, dy \Big|\\\le&  c_{n,s}   \|u\|_{L^\infty}  \int_{B_R\setminus B_1(x)} \frac{2\ln|x-y|}{|x-y|^{n+2s}} \, dy 
		\\\le&  2  c_{n,s} \left|\mathbb{S}^{n-1}\right| \|u\|_{L^\infty} \int_1^{R+1} \frac{\ln r}{r^{1+2s}}dr
		\\=& 2  c_{n,s} \left|\mathbb{S}^{n-1}\right|\|u\|_{L^\infty}\left(-\frac{\ln(R+1)}{2s(R+1)^{2s}}
		+\frac{1-(R+1)^{-2s}}{4s^{2}}\right)\to0\quad {\rm as}\ \  s\to0^+.
	\end{align*}}

	\noindent{\it Part 2: estimates for $b_{n,s}(-\Delta)^su(x)$: }
	Observe that 
	{\small \begin{align*} 
		b_{n,s} (-\Delta)^s u(x)&= b_{n,s} c_{n,s}  \int_{B_1(x)} \frac{u(x)-u(y)}{|x-y|^{n+2s}} \, dy+  b_{n,s}c_{n,s}  \int_{\R^n\setminus B_1(x)} \frac{1 }{|x-y|^{n+2s}} \, dy\,  u(x) 
		\\&\quad - b_{n,s}c_{n,s} \int_{\R^n\setminus B_1(x)} \frac{ u(y)}{|x-y|^{n+2s}}\, dy.
	\end{align*}}
	Since for $\delta\in (0,1)$
	\begin{align*} 
		\Big|   \int_{B_\delta (x)} \frac{u(x)-u(y)}{|x-y|^{n+2s}}\, dy\Big|& \leq   C_0    \int_{B_\delta(x)} \frac{1}{|x-y|^{n+2s-1}} \, dy 
		\\&= C_0  \left|\mathbb{S}^{n-1}\right| \int_0^\delta  \frac{1}{r^{2s}}dr
		\\&= \frac{ C_0}{1-2s}  \left|\mathbb{S}^{n-1}\right|   \delta^{1-2s}
	\end{align*}
	and
	\begin{align*} 
		\Big|    \int_{B_\delta (x)} \frac{u(x)-u(y)}{|x-y|^{n}}\, dy \Big|& \leq   C_0    \int_{B_\delta(x)} \frac{1}{|x-y|^{n-1}} \, dy =C_{0}\left|\mathbb{S}^{n-1}\right| \delta
	\end{align*}
	then 
	{\small\begin{align*} 
		&	\Big|    \int_{B_1(x)} \frac{u(x)-u(y)}{|x-y|^{n+2s}}\, dy-  \int_{B_1(x)} \frac{u(x)-u(y)}{|x-y|^{n}}\, dy\Big|  \\ \leq& 
		\int_{B_1(x)\setminus B_\delta (x)}  |u(x)-u(y)| \Big|\frac1{|x-y|^{n+2s}}-\frac1{|x-y|^{n}} \Big|dy+\frac{ C_0}{1-2s}  \left|\mathbb{S}^{n-1}\right|   \delta^{1-2s}+C_{0}\left|\mathbb{S}^{n-1}\right| \delta
		\\ \leq & 2\|u\|_{L^\infty} \int_{B_1(x)\setminus B_\delta (x)}  \frac{1}{|x-y|^{n}}  \Big(e^{2s \ln \frac1{|x-y|}}-1 \Big)dy+\frac{ C_0}{1-2s}  \left|\mathbb{S}^{n-1}\right|   \delta^{1-2s}+C_{0}\left|\mathbb{S}^{n-1}\right| \delta
		\\ \leq & \frac{4s}{\delta^{2s}}\ln \delta\|u\|_{L^\infty}\int_{\delta<|z|<1}  \frac{1}{|z|^{n+1}}dz +\frac{ C_0}{1-2s}  \left|\mathbb{S}^{n-1}\right|   \delta^{1-2s}+C_{0}\left|\mathbb{S}^{n-1}\right| \delta.
	\end{align*}}
	Thus,
	$$ \int_{B_1(x)} \frac{u(x)-u(y)}{|x-y|^{n+2s}}\, dy\to  \int_{B_1(x)} \frac{u(x)-u(y)}{|x-y|^{n}}\, dy\quad \text{in} \quad L^{\infty}\quad s\rightarrow 0$$
	and
	\begin{align*} 
		b_{n,s} c_{n,s} \int_{B_1(x)} \frac{u(x)-u(y)}{|x-y|^{n+2s}}\, dy & \to \pi^{-\frac n2}  \Gamma(\frac{n}2) \int_{B_1(x)} \frac{u(x)-u(y)}{|x-y|^{n}}\, dy\quad \text{in} \quad L^{\infty}\quad s\rightarrow 0.
	\end{align*}
	Direct computations show that 
	\begin{align*} 
		b_{n,s} c_{n,s}   \int_{\R^n\setminus B_1(x)} \frac{1 }{|x-y|^{n+2s}} \, dy\,  u(x)  &=  b_{n,s} c_{n,s}    \left|\mathbb{S}^{n-1}\right|    
		\int_1^{\infty}\frac{1}{r^{2s+1}}dr \,  u(x) 
		\\&  =b_{n,s} c_{n,s}  \left|\mathbb{S}^{n-1}\right|    (2s)^{-1} \,  u(x). 
	\end{align*}

	\noindent{\it Part 3: estimates for $ (-\Delta)^{s+\Log}u(x)$: }
	\begin{align*} 
		&\quad  c_{n,s}  \int_{\R^n\setminus B_1(x)} \frac{-2\ln|x-y| }{|x-y|^{n+2s}} \, dy\,  + b_{n,s} c_{n,s}   \int_{\R^n\setminus B_1(x)} \frac{1 }{|x-y|^{n+2s}} \, dy\, 
		\\& =c_{n,s}\left|\mathbb{S}^{n-1}\right|    \frac1{2s} \Big[b_{n,s}-\frac1s\Big]
		\\&=\left|\mathbb{S}^{n-1}\right|   2^{2s-1} \pi^{-\frac n2} \frac{\Gamma(\frac{n+2s}{2})}{\Gamma(1-s)}\Big[ \ln 4+\psi(1-s) +\psi(\frac{n+2s}{2})\Big]
		\\&\to \ln 4+\psi(1) +\psi(\frac{n}{2})=\rho_n\quad {\rm as}\ \  s\to0^+.
	\end{align*}
	Thus, for $s\in(0,\frac14)$
	{\footnotesize\begin{align*} 
		(-\Delta)^{s+\Log}u(x)&=c_{n,s}  \cL_1 u(x)  
		+b_{n,s} (-\Delta)^s u(x)
		\\&=  c_{n,s} \int_{B_1(x)} \frac{u(x)-u(y)}{|x-y|^{n+2s}}\big(\!-2\ln|x-y|\big)\, dy+b_{n,s} c_{n,s} \int_{B_1(x)} \frac{u(x)-u(y)}{|x-y|^{n+2s}}\, dy
		\\&+  c_{n,s}  \int_{\R^n\setminus B_1(x)} \frac{-2\ln|x-y| }{|x-y|^{n+2s}} \, dy\,  u(x) + b_{n,s} c_{n,s}   \int_{\R^n\setminus B_1(x)} \frac{1 }{|x-y|^{n+2s}} \, dy\,  u(x)
		\\&- c_{n,s} \int_{\R^n\setminus B_1(x)} \frac{ u(y)}{|x-y|^{n+2s}}\big(\!-2\ln|x-y|\big)\, dy-  b_{n,s}  c_{n,s} \int_{\R^n\setminus B_1(x)} \frac{ u(y)}{|x-y|^{n+2s}}\, dy
		\\&\to \loglap u(x)\quad {\rm as}\quad \text{in}\quad L^{\infty}\quad s\to0^+.
	\end{align*}}
	The proof ends.    
    \end{proof}

	\subsection{Spectral Representation}\label{spectral}
	
	In fact, the functional-calculus framework provides a clean and equivalent definition of
the fractional--logarithmic Laplacian. We first recall the necessary notions.

	Let $-\Delta$ be the nonnegative self-adjoint Laplacian, which is an unbounded operator on $L^2(\R^n)$.
	By the spectral theorem, there exists a spectral resolution
	$E(\cdot)$ on $[0,\infty)$ such that
	\[ -\Delta= \int_{[0,\infty)} \lambda\, dE(\lambda).\]
	For each pair \(u,v\in L^2(\mathbb{R}^n)\), $E_{u,v}$ is a regular complex Borel measure of bounded variation on \([0,\infty)\), supported on \(\sigma(-\Delta)\),  where
	\[
	E_{u,v}(B)\;=\;\bigl\langle E(B)\,u,\;v\bigr\rangle_{L^2(M,\mu)},\:\:E\left(\sigma\left(-\Delta\right)\right)=I \quad\text{where}\:\:
	B\subset[0,\infty)\text{ is Borel set}
	\]
	and satisfying
	\[
	\bigl|E_{u,v}\bigr|\bigl([0,\infty)\bigr)
	\;\le\;\|u\|_{L^2(\mathbb{R}^n)}\,\|v\|_{L^2(\mathbb{R}^n)}.
	\]
	Moreover, $E_{u,u}$ is a positive measure with $E_{u,u}\bigl([0,\infty)\bigr)=\|u\|_{L^2(\mathbb{R}^n)}.$ In fact, we have
	\[\operatorname{Dom}(-\Delta)=\Bigl\{u\in L^2(\R^n):
	\int_{[0,\infty)}\lambda^2 dE_{u,u}(\lambda)<\infty\Bigr\},\]
	and
	\[ \langle -\Delta u,v\rangle_{L^2}
	=\int_{[0,\infty)} \lambda\, dE_{u,v}(\lambda),\quad u\in \operatorname{Dom}(-\Delta),\ v\in L^2(\mathbb{R}^n). \]
	
	For every Borel function
	$\varphi:[0,\infty)\to\R$, one can define
	\[
	\varphi(-\Delta) := \int_{[0,\infty)} \varphi(\lambda)\, dE(\lambda),
	\]
	with
	\[\operatorname{Dom}(\varphi(-\Delta) )=\Bigl\{u\in L^2(\R^n):
	\int_{[0,\infty)}|\varphi(\lambda)|^2 dE_{u,u}(\lambda)<\infty\Bigr\}.\]
	The action rule
	\[
	\langle \varphi(-\Delta)u,v\rangle_{L^2}
	=\int_{[0,\infty)} \varphi(\lambda)\, dE_{u,v}(\lambda),\quad u\in \operatorname{Dom}(\varphi(-\Delta)),\ v\in L^2(\mathbb{R}^n). 
	\]
	
	We next present a proposition which rigorously establishes that $(-\Delta)^{s+\Log}$ coincides with the first-order derivative of the fractional Laplace $(-\Delta)^{t}$ with respect to $t$ at $t=s.$

     \begin{proof}[\bf Proof of Proposition \ref{prop:derivative-frac-power}:]
		By spectral calculus, for $t$ close to $s$ we can write
		\[
		\frac{(-\Delta)^t-(-\Delta)^s}{t-s}\,u - (-\Delta)^{s+\Log}u
		=\int_{[0,\infty)} h_{t,s}(\lambda)\,dE(\lambda)\,u,
		\]
		where
		\[
		h_{t,s}(\lambda):=\frac{\lambda^t-\lambda^s}{t-s}-\lambda^s\ln\lambda,
		\qquad \lambda>0,
		\]
		and we set $h_{t,s}(0):=0$.
		
		Hence, using the spectral measure $E_{u,u}$,
		\begin{equation}\label{eq:derivative-frac-power-L2}
			\Bigl\|
			\frac{(-\Delta)^t-(-\Delta)^s}{t-s}\,u - (-\Delta)^{s+\Log}u
			\Bigr\|_{L^2}^2
			=\int_{[0,\infty)} |h_{t,s}(\lambda)|^2\,dE_{u,u}(\lambda).
		\end{equation}
		For each fixed $\lambda>0$, we have $h_{t,s}(\lambda)\to0$ as $t\to s$, since
		$\partial_t(\lambda^t)=\lambda^t\ln\lambda$.
		
		It remains to control $h_{t,s}$ uniformly in $\lambda$ for $t$ near $s$.
		Fix $\delta\in(0,\min\left\{s,s_0-s\right\})$ and restrict to $|t-s|<\delta$.
		Consider the smooth function $F(\tau):=\lambda^\tau$ on $\tau\in\R$.
		By Taylor's theorem with remainder, there exists $\theta=\theta(\lambda,t,s)$
		between $s$ and $t$ such that
		\[
		\lambda^t=\lambda^s+(t-s)\lambda^s\ln\lambda
		+\frac{(t-s)^2}{2}\lambda^\theta(\ln\lambda)^2.
		\]
		Therefore,
		\[
		h_{t,s}(\lambda)=\frac{(t-s)}{2}\lambda^\theta(\ln\lambda)^2,
		\]
		and hence, for $|t-s|<\delta$,
		\begin{equation}\label{eq:hts-bound}
			|h_{t,s}(\lambda)|^2
			\le C\,|t-s|^2\,(\ln\lambda)^4
			\Bigl(\lambda^{2(s-\delta)}\mathbf 1_{\{\lambda\le1\}}
			+\lambda^{2(s+\delta)}\mathbf 1_{\{\lambda\ge1\}}\Bigr),
		\end{equation}
		with a constant $C>0$ independent of $\lambda,t$.

		Since $s-\delta>0$, the function
		$\lambda\mapsto (\ln\lambda)^4\lambda^{2(s-\delta)}$ is bounded on $(0,1]$. Hence
		\[
		\int_{(0,1]} (\ln\lambda)^4\lambda^{2(s-\delta)}\,dE_{u,u}(\lambda)
		\le C\int_{(0,1]} dE_{u,u}(\lambda)
		\le C\|u\|_{L^2}^2<\infty.
		\]
		
		Since $\delta<s_0-s$ and $\ln\lambda$ grows slower than any power of
		$\lambda$, there exists $C_\delta>0$ such that for all $\lambda\ge1$,
		\[
		(\ln\lambda)^4\,\lambda^{2(s+\delta)}\le C_\delta\,\lambda^{2s_0}.
		\]
		Therefore, using $u\in H^{2s_0}(\R^n)$,
		\[
		\int_{[1,\infty)} (\ln\lambda)^4\lambda^{2(s+\delta)}\,dE_{u,u}(\lambda)
		\le C_\delta\int_{[1,\infty)} \lambda^{2s_0}\,dE_{u,u}(\lambda)
		<\infty.
		\]
		
		Combining these two bounds with \eqref{eq:hts-bound}, we obtain from
		\eqref{eq:derivative-frac-power-L2} that
		\[
		\int_{[0,\infty)} |h_{t,s}(\lambda)|^2\,dE_{u,u}(\lambda)
		\le C\,|t-s|^2,
		\]
		where $C$ is independent of $t$ near $s$. Hence the left-hand side tends to
		$0$ as $t\to s$.
        \end{proof}

	In what follows, we relate the spectral measure of $-\Delta$ on $\R^n$ to the Fourier transform, and thus provide fourier-multiplier characterizations of the fractional--logarithmic operator and the associated space. 
	
	In the Euclidean setting, $-\Delta$ is unitarily diagonalized by the Fourier
	transform. More precisely, for $f\in L^2(\mathbb{R}^n)$, the spectral measure of $-\Delta$ satisfies
	\[
	E_{f,f}(B)
	=\int_{\{|\xi|^2\in B\}}|\widehat f(\xi)|^2\,d\xi,
	\qquad B\subset[0,\infty)\ \text{Borel}.
	\]
	Writing
	\[
	F(\lambda):=E_{f,f}\bigl((0,\lambda]\bigr)
	=\int_{|\xi|\le\sqrt\lambda}|\widehat f(\xi)|^2\,d\xi
	\]
	and differentiating in $\lambda$ in polar coordinates $\xi=r\omega$ yields
	\[
	dE_{f,f}(\lambda)
	=F'(\lambda)\,d\lambda
	=\frac12\,\lambda^{\frac n2-1}
	\int_{\mathbb{S}^{n-1}}\bigl|\widehat f(\sqrt\lambda\,\omega)\bigr|^2\,d\omega\,d\lambda.
	\]

	\begin{lemma}
		Let $u,v\in L^2(\mathbb R^n)$. Then for every Borel set $B\subset[0,\infty)$,
		\[E_{u,v}(B):=\langle E(B)u,v\rangle_{L^2}
			=\int_{\{|\xi|^2\in B\}}\widehat u(\xi)\,\overline{\widehat v(\xi)}\,d\xi.\]
		In particular, writing
		\[
		F_{u,v}(\lambda):=E_{u,v}((0,\lambda])
		=\int_{|\xi|\le\sqrt\lambda}\widehat u(\xi)\,\overline{\widehat v(\xi)}\,d\xi,
		\]
		one has that $E_{u,v}$ is absolutely continuous on $(0,\infty)$ and
		\begin{equation}\label{eq:dEuv-density}
			dE_{u,v}(\lambda)=F'_{u,v}(\lambda)\,d\lambda
			=\frac12\,\lambda^{\frac n2-1}
			\int_{\mathbb{S}^{n-1}}\widehat u(\sqrt\lambda\,\omega)\,
			\overline{\widehat v(\sqrt\lambda\,\omega)}\,d\omega\,d\lambda.
		\end{equation}
	\end{lemma}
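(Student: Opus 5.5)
The plan is to reduce everything to the Plancherel identity, i.e.\ to the fact that the Fourier transform unitarily diagonalizes $-\Delta$. We identify the projection-valued measure explicitly and then read off the complex measure $E_{u,v}$. First, we define for each Borel set $B\subset[0,\infty)$ the operator
\[
P(B)u:=\mathcal F^{-1}\bigl(\mathbf 1_{\{|\xi|^2\in B\}}\widehat u\bigr).
\]
By Plancherel, each $P(B)$ is an orthogonal projection on $L^2(\R^n)$. The map $B\mapsto P(B)$ is countably additive in the strong operator topology (by dominated convergence on the Fourier side), and $P([0,\infty))=I$. Since
\[
\int_{[0,\infty)}\lambda\,dP_{u,v}(\lambda)=\int_{\R^n}|\xi|^2\widehat u\,\overline{\widehat v}\,d\xi=\langle -\Delta u,v\rangle_{L^2}
\]
for $u\in H^2(\R^n)$, with the corresponding domain characterization, $P$ is a spectral resolution of $-\Delta$. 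By uniqueness of the spectral resolution of a self-adjoint operator, $E=P$. Plancherel then gives
\[
E_{u,v}(B)=\langle P(B)u,v\rangle=\int_{\{|\xi|^2\in B\}}\widehat u\,\overline{\widehat v}\,d\xi.
\]
This is the first claim.

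For the density, we note that $g:=\widehat u\,\overline{\widehat v}\in L^1(\R^n)$ by Cauchy--Schwarz. Passing to polar coordinates $\xi=r\omega$ and substituting $r=\sqrt\lambda$, so that $dr=\tfrac12\lambda^{-1/2}d\lambda$, we obtain
\[
E_{u,v}(B\cap(0,\infty))=\int_{B\cap(0,\infty)}\tfrac12\lambda^{\frac n2-1}\int_{\mathbb S^{n-1}}g(\sqrt\lambda\,\omega)\,d\omega\,d\lambda .
\]
Tonelli and Fubini justify this: the inner spherical integral is defined for a.e.\ $\lambda$, and the resulting density belongs to $L^1(0,\infty)$. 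This already proves absolute continuity of $E_{u,v}$ on $(0,\infty)$. Taking $B=(0,\lambda]$ then identifies $F_{u,v}$ as the integral of this density, so $F_{u,v}$ is absolutely continuous and $F'_{u,v}$ equals the density for a.e.\ $\lambda$ by the Lebesgue differentiation theorem. This yields \eqref{eq:dEuv-density}. We also note that $E_{u,v}(\{0\})=\int_{\{\xi=0\}}g=0$, so there is no atom at the origin.

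The main obstacle is the interpretation of $\widehat u(\sqrt\lambda\,\omega)$, which is only defined a.e.\ since $\widehat u$ is merely an $L^2$ function. We will state the density identity as holding for a.e.\ $\lambda$, with the spherical integral understood via Fubini. Alternatively, we can first prove the identity for $u,v\in\mathscr S(\R^n)$, where everything is continuous and $F_{u,v}$ is $C^1$, and then extend by density using the bound $|E_{u,v}|\le\|u\|\|v\|$ and the $L^1$-continuity of $(u,v)\mapsto g$. The identification $E=P$ rests on standard uniqueness of the spectral theorem, as in \cite[Chapter 7]{Teschl2014}, and we will cite it.
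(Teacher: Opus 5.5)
Your proof is correct and follows the same route as the paper: identify $E(B)$ as the Fourier multiplier $\mathbf 1_B(|\xi|^2)$, apply Plancherel, and pass to polar coordinates with $\lambda=r^2$. Your version is more careful in two places. First, you justify the multiplier form of $E(B)$ through the uniqueness of the spectral resolution, whereas the paper only asserts it. Second, you obtain absolute continuity and the density for general $u,v\in L^2(\R^n)$ via Fubini and Lebesgue differentiation (with the identity holding for a.e.\ $\lambda$). The paper instead differentiates $F_{u,v}$ pointwise under the assumption $u,v\in\mathscr S(\R^n)$, which is stronger than the lemma's hypothesis.
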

	
	\begin{proof}
		Since $\mathcal F$ is unitary and diagonalizes $-\Delta$ on $\R^n$,
		the spectral projection $E(B)$ acts as a Fourier multiplier:
		\[
		\widehat{E(B)f}(\xi)=\mathbf 1_{B}(|\xi|^2)\,\widehat f(\xi),
		\qquad f\in\mathscr S(\mathbb R^n).
		\]
		Therefore, by Plancherel,
		\[
		E_{u,v}(B)=\langle E(B)u,v\rangle_{L^2}
		=\int_{\R^n}\mathbf 1_B(|\xi|^2)\,\widehat u(\xi)\,
		\overline{\widehat v(\xi)}\,d\xi
		=\int_{\{|\xi|^2\in B\}}\widehat u(\xi)\,\overline{\widehat v(\xi)}\,d\xi.
		\]
		For $\lambda>0$,
		\[
		F_{u,v}(\lambda)=E_{u,v}((0,\lambda])
		=\int_{|\xi|\le\sqrt\lambda}\widehat u(\xi)\,\overline{\widehat v(\xi)}\,d\xi.
		\]
		Passing to polar coordinates $\xi=r\omega$,
		\[
		F_{u,v}(\lambda)=\int_0^{\sqrt\lambda}\int_{\mathbb{S}^{n-1}}
		\widehat u(r\omega)\,\overline{\widehat v(r\omega)}\,r^{n-1}\,d\omega\,dr.
		\]
		Since $u,v\in\mathscr S(\mathbb R^n)$, the integrand is smooth and integrable; thus we may
		differentiate with respect to $\lambda$:
	{\small	\[
		F'_{u,v}(\lambda)
		=\frac{1}{2\sqrt\lambda}\int_{\mathbb{S}^{n-1}}
		\widehat u(\sqrt\lambda\,\omega)\,\overline{\widehat v(\sqrt\lambda\,\omega)}\,
		(\sqrt\lambda)^{\,n-1}\,d\omega
		=\frac12\,\lambda^{\frac n2-1}\int_{\mathbb{S}^{n-1}}
		\widehat u(\sqrt\lambda\,\omega)\,\overline{\widehat v(\sqrt\lambda\,\omega)}\,d\omega,
		\]}
		which gives \eqref{eq:dEuv-density}.
	\end{proof}

	\begin{lemma}
		\label{lem:fourier-dEuv}
		For every $s\in(0,1)$ and $u\in H^{s+\Log}(\R^n),\ v\in L^2(\mathbb{R}^n)$
		\begin{equation}\label{eq:lambda-s-log-Euv-Fourier}
			\int_{[0,\infty)}\lambda^s\ln\lambda\,dE_{u,v}(\lambda)
			=\int_{\R^n}|\xi|^{2s}\ln\bigl(|\xi|^2\bigr)\,
			\widehat u(\xi)\,\overline{\widehat v(\xi)}\,d\xi.
		\end{equation}
	\end{lemma}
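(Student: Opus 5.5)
The plan is to use the preceding lemma, which identifies $E_{u,v}$ with the image of the measure $\widehat u\,\overline{\widehat v}\,d\xi$ under the map $\xi\mapsto|\xi|^2$. We then apply the abstract change-of-variables formula for push-forward measures to the Borel function $\varphi(\lambda)=\lambda^s\ln\lambda$, with $\varphi(0):=0$. First, for nonnegative Borel $g$ and $u=v$, the identity $E_{u,u}(B)=\int \mathbf 1_B(|\xi|^2)|\widehat u|^2\,d\xi$ extends from indicator functions to simple functions by linearity. It then extends to all nonnegative Borel $g$ by monotone convergence, giving
\[
\int_{[0,\infty)} g(\lambda)\,dE_{u,u}(\lambda)=\int_{\R^n} g(|\xi|^2)\,|\widehat u(\xi)|^2\,d\xi .
\]
Taking $g=|\varphi|^2$ shows that the condition $u\in H^{s+\Log}(\R^n)$, read as the domain of $\varphi(-\Delta)$, is equivalent to $|\xi|^{2s}\ln(|\xi|^2)\,\widehat u\in L^2(\R^n)$. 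This is the Fourier characterization of the domain.

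Second, we pass to the complex measure $E_{u,v}$. By polarization, $E_{u,v}$ is a combination of four positive measures $E_{w,w}$, but the $w$'s involve $v$, which is only in $L^2$, so $\varphi$ need not be integrable against each of them. We therefore avoid polarization and work directly with the density. The lemma gives $E_{u,v}(B)=\int \mathbf 1_B(|\xi|^2)\,\widehat u\,\overline{\widehat v}\,d\xi$, and $\widehat u\,\overline{\widehat v}\in L^1$. Simple-function approximation extends this identity to all bounded Borel $g$. For the unbounded $\varphi$, we truncate: set $\varphi_N:=\varphi\,\mathbf 1_{\{|\varphi|\le N\}}$. The functional calculus yields
\[
\int\varphi_N\,dE_{u,v}=\langle\varphi_N(-\Delta)u,v\rangle\to\langle\varphi(-\Delta)u,v\rangle=\int\varphi\,dE_{u,v}
\]
for $u$ in the domain, since $\|\varphi_N(-\Delta)u-\varphi(-\Delta)u\|^2=\int|\varphi-\varphi_N|^2\,dE_{u,u}\to0$ by dominated convergence. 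On the Fourier side,
\[
\int\varphi_N(|\xi|^2)\,\widehat u\,\overline{\widehat v}\,d\xi\to\int\varphi(|\xi|^2)\,\widehat u\,\overline{\widehat v}\,d\xi
\]
by dominated convergence, with dominating function $|\varphi(|\xi|^2)\widehat u|\,|\widehat v|\in L^1$ by Cauchy--Schwarz and the first step. Equating the two limits gives \eqref{eq:lambda-s-log-Euv-Fourier}. The point $\lambda=0$ is harmless, since $\{\xi:|\xi|^2=0\}$ is Lebesgue-null and $\varphi(0)=0$.

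The main obstacle is this justification of the integral against the complex, not necessarily finite-variation-compatible, measure $E_{u,v}$ for an unbounded integrand. It is resolved by the truncation $\varphi_N$ together with the domain condition obtained in the first step. If $H^{s+\Log}$ is instead meant as the form domain (only $\int|\varphi|\,dE_{u,u}<\infty$), the same argument works provided $v$ is taken in the same space. In that case Cauchy--Schwarz is applied with the weight $|\varphi|^{1/2}$ on each factor.
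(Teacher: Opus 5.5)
Your proof is correct, but it takes a different route from the paper's.

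\textbf{The paper's route.} The paper inserts the Lebesgue density \eqref{eq:dEuv-density}, $dE_{u,v}(\lambda)=\tfrac12\lambda^{\frac n2-1}\int_{\mathbb S^{n-1}}\widehat u(\sqrt\lambda\,\omega)\overline{\widehat v(\sqrt\lambda\,\omega)}\,d\omega\,d\lambda$. It then substitutes $\lambda=r^2$ and reassembles the Fourier integral in polar coordinates. This is short and explicit. However, it takes for granted that $\lambda^s\ln\lambda$ is integrable against this density, and it never uses the hypothesis on $u$. The density formula itself is obtained by differentiating $F_{u,v}$ under a Schwartz-class assumption.

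\textbf{Your route.} You never need a density or polar coordinates. You read the preceding lemma as saying that $E_{u,v}$ is the image of $\widehat u\,\overline{\widehat v}\,d\xi$ under $\xi\mapsto|\xi|^2$. You extend that identity from indicators to bounded Borel functions. You then handle the unbounded multiplier $\varphi(\lambda)=\lambda^s\ln\lambda$ with the truncations $\varphi_N$ and dominated convergence on both sides.

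\textbf{What each buys.} Your argument shows exactly where the domain condition on $u$ enters. It gives $\varphi(|\xi|^2)\widehat u\in L^2$, hence the $L^1$ majorant $|\varphi(|\xi|^2)\widehat u|\,|\widehat v|$ and the convergence $\varphi_N(-\Delta)u\to\varphi(-\Delta)u$ in $L^2$. It also explains why polarization is the wrong tool when $v$ is merely in $L^2$. It works verbatim for any Borel multiplier, and your first step gives the Fourier description of the domain in Theorem~\ref{thm:fourspec}(i) as a by-product. Reading $H^{s+\Log}(\R^n)$ as the operator domain \eqref{domainlo} is also the interpretation under which the statement makes sense for arbitrary $v\in L^2$. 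In short, the paper's computation buys brevity and an explicit spectral density, while yours buys a complete justification under exactly the stated hypotheses.
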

	
	\begin{proof}
		Using \eqref{eq:dEuv-density},
		\begin{align*}
		    \int_{[0,\infty)}\lambda^s\ln\lambda\,dE_{u,v}(\lambda)
		&=\int_0^\infty \lambda^s\ln\lambda\,F'_{u,v}(\lambda)\,d\lambda
		\\&=\frac12\int_0^\infty \lambda^{\frac n2-1+s}\ln\lambda
		\int_{\mathbb{S}^{n-1}}\widehat u(\sqrt\lambda\,\omega)\,
		\overline{\widehat v(\sqrt\lambda\,\omega)}\,d\omega\,d\lambda.
		\end{align*}
		Now set $\lambda=r^2$, then
		\begin{align*}
			\int_{[0,\infty)}\lambda^s\ln\lambda\,dE_{u,v}(\lambda)
			&=\frac12\int_0^\infty\!\!\int_{\mathbb{S}^{n-1}}
			r^{n-2+2s}\ln(r^2)\,\widehat u(r\omega)\,\overline{\widehat v(r\omega)}\,d\omega\,(2r\,dr)\\
			&=\int_0^\infty\!\!\int_{\mathbb{S}^{n-1}}
			r^{n-1+2s}\ln(r^2)\,\widehat u(r\omega)\,\overline{\widehat v(r\omega)}\,d\omega\,dr\\
			&=\int_{\R^n}|\xi|^{2s}\ln\bigl(|\xi|^2\bigr)\,
			\widehat u(\xi)\,\overline{\widehat v(\xi)}\,d\xi,
		\end{align*}
		which is exactly \eqref{eq:lambda-s-log-Euv-Fourier}.
	\end{proof}

    \begin{proof}[\bf Proof of Theorem \ref{prop:derivative-frac-power}:]
(i) We now change variables $\lambda=r^2$, then
		\begin{align*}
			\int_0^\infty\lambda^s\ln^2\lambda\,dE_{f,f}(\lambda)
			&=\frac12\int_0^\infty\lambda^{\frac n2-1+s}\ln^2\lambda
			\int_{\mathbb{S}^{n-1}}\bigl|\widehat f(\sqrt\lambda\,\omega)\bigr|^2\,d\omega\,d\lambda \\
			&=\frac12\int_0^\infty\!\!\int_{\mathbb{S}^{n-1}}r^{n-2+2s}\ln^2(r^2)
			\bigl|\widehat f(r\omega)\bigr|^2\,d\omega\,2r\,dr \\
			&=\int_0^\infty\!\!\int_{\mathbb{S}^{n-1}}r^{n-1+2s}\ln^2(r^2)
			\bigl|\widehat f(r\omega)\bigr|^2\,d\omega\,dr \\
			&=\int_{\R^n}|\xi|^{2s}\ln^2\bigl(|\xi|^2\bigr)\,
			\bigl|\widehat f(\xi)\bigr|^2\,d\xi.
		\end{align*}

	(ii)	Let $\varphi\in L^2(\mathbb R^n)$. By the definition of the functional calculus,
		\[
		\langle (-\Delta)^{s+\Log}u,\varphi\rangle_{L^2}
		=\int_{[0,\infty)}\lambda^s\ln\lambda\,dE_{u,\varphi}(\lambda).
		\]
		Applying Lemma~\ref{lem:fourier-dEuv} with $v=\varphi$ gives
		\[
		\langle (-\Delta)^{s+\Log}u,\varphi\rangle_{L^2}
		=\int_{\R^n}|\xi|^{2s}\ln(|\xi|^2)\,\widehat u(\xi)\,
		\overline{\widehat\varphi(\xi)}\,d\xi.
		\]
		By Plancherel, the right-hand side equals
		\[
		\Big\langle \cF^{-1}\!\big(|\xi|^{2s}\ln(|\xi|^2)\,\widehat u\big),\,
		\varphi\Big\rangle_{L^2}.
		\]
		Since $\mathscr S(\mathbb R^n)$ is dense in $L^2(\R^n)$, we conclude that
		\[
		(-\Delta)^{s+\Log}u=\cF^{-1}\!\big(|\xi|^{2s}\ln(|\xi|^2)\,\widehat u\big)
		\quad\text{in }L^2(\R^n),
		\]
		which is equivalent to \eqref{eq:fourier-symbol-frac-log}.
		The characterization of the domain follows immediately from Plancherel:
		\[
		(-\Delta)^{s+\Log}u\in L^2
		\iff |\xi|^{2s}\ln(|\xi|^2)\,\widehat u(\xi)\in L^2.
		\]
\end{proof}

	\subsection{Extension Problem}

    In this subsection, we derive the extension characterization of the fractional-logarithmic operator and identify the associated Dirichlet-to-Neumann-type boundary formula. We first establish the boundary trace of \(v_s\).

	\begin{lemma}\label{b1v1s}
		Let $v_s$ be defined in (\ref{pot 2.11}), then 
		$$\lim_{t\to 0^+} v_s(x,t)=b_1u(x),$$
		where 
		$$b_1:=p_{n,s} \int_{\R^n} \frac{  -\ln(|z|^2+1)   }{(|z|^2+1)^{\frac{n+2s}{2}}} dz=-\frac{ \partial_s p_{n,s}}{p_{n,s}}.  $$
	\end{lemma}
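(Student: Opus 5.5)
The plan is to change variables $y=x+tz$ in \eqref{pot 2.11} and then pass to the limit by dominated convergence. With this substitution $dy=t^n\,dz$, $|x-y|^2+t^2=t^2(|z|^2+1)$ and $\ln(|x-y|^2+t^2)=2\ln t+\ln(|z|^2+1)$. The $2\ln t$ terms cancel, and the powers of $t$ cancel as well, since
\[
t^{2s}\,t^{n}\,t^{-(n+2s)}=1 .
\]
This leaves
\[
v_s(x,t)=p_{n,s}\int_{\R^n}\frac{-\ln(|z|^2+1)}{(|z|^2+1)^{\frac{n+2s}{2}}}\,u(x+tz)\,dz .
\]

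For the limit $t\to0^+$: since $u\in C_c^\infty(\R^n)$ is bounded and continuous, $u(x+tz)\to u(x)$ pointwise. The integrand is dominated by $\|u\|_{L^\infty}\ln(|z|^2+1)(|z|^2+1)^{-\frac{n+2s}{2}}$, which is integrable because it decays like $|z|^{-n-2s}\ln|z|$ at infinity with $s>0$, and it is bounded near the origin. Dominated convergence therefore gives $v_s(x,t)\to b_1u(x)$, and the convergence is even uniform in $x$ by uniform continuity of $u$.

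For the identity $b_1=-\partial_s p_{n,s}/p_{n,s}$, write
\[
p_{n,s}^{-1}=\int_{\R^n}(|z|^2+1)^{-\frac{n+2s}{2}}\,dz .
\]
Differentiating in $s$ under the integral sign is justified by a dominating function of the form $\ln(|z|^2+1)(|z|^2+1)^{-\frac{n+2s'}{2}}$ with $s'<s$ fixed, uniformly for nearby $s$. This gives
\[
\partial_s\big(p_{n,s}^{-1}\big)=\int_{\R^n}\frac{-\ln(|z|^2+1)}{(|z|^2+1)^{\frac{n+2s}{2}}}\,dz .
\]
Since $\partial_s(p_{n,s}^{-1})=-\partial_s p_{n,s}/p_{n,s}^2$, multiplying by $p_{n,s}$ yields $b_1=-\partial_s p_{n,s}/p_{n,s}$. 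If desired, the explicit formula $p_{n,s}=\pi^{-n/2}\Gamma(\frac{n+2s}{2})/\Gamma(s)$ then gives $b_1=\psi(s)-\psi(\frac{n+2s}{2})$.

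There is no real obstacle in this argument. The only points requiring care are the domination estimates, which rest on the integrability of $|z|^{-n-2s}\ln|z|$ at infinity.
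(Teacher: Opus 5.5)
Your proposal is correct and follows the paper's proof essentially verbatim. You substitute $y=x+tz$ to rewrite $v_s(x,t)$ as $p_{n,s}\int_{\R^n}\frac{-\ln(|z|^2+1)}{(|z|^2+1)^{\frac{n+2s}{2}}}u(x+tz)\,dz$, and you then differentiate $p_{n,s}^{-1}=\int_{\R^n}(1+|z|^2)^{-\frac{n+2s}{2}}dz$ under the integral sign; your domination argument (uniform in $s$ near the fixed value, with $0<s'<s$) is slightly more careful than the paper's.
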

 
\begin{proof}
    Direct computation shows that 
	\begin{align*}
		v_s  (x,t) &=p_{n,s}\ t^{2s}   \int_{\R^n} \frac{2\ln t-\ln(|x-y|^2+t^2)   }{(|x-y|^2+t^2)^{\frac{n+2s}{2}}}u(y)dy
	\\&= p_{n,s}\int_{\R^n} \frac{  -\ln(z^2+1)   }{(z^2+1)^{\frac{n+2s}{2}}}u(x+tz )dz
\to b_1 u(x)\quad{\rm as}\ \,  t\to0^+, 
	\end{align*}
	where 
	$$b_1=p_{n,s} \int_{\R^n} \frac{  -\ln(|z|^2+1)   }{(|z|^2+1)^{\frac{n+2s}{2}}} dz.$$
Set
\[
I(s):=\int_{\R^n}\frac{1}{(1+|z|^2)^{\frac{n+2s}{2}}}\,dz,
\qquad\text{so that}\qquad
p_{n,s}=\frac1{I(s)}.
\]
For $s\in(0,1)$ the integrand is smooth in $s$ and satisfies
\[
\partial_s\Big((1+|z|^2)^{-\frac{n+2s}{2}}\Big)
=-(\ln(1+|z|^2))\,(1+|z|^2)^{-\frac{n+2s}{2}}.
\]
Moreover, since for large $|z|$ we have
\[
|\ln(1+|z|^2)|\,(1+|z|^2)^{-\frac{n+2s}{2}}
\lesssim (1+|z|^2)^{-\frac{n+s}{2}},
\]
the derivative is integrable, hence we may differentiate under the integral sign to obtain
\[
I'(s)=\int_{\R^n}\frac{-\ln(1+|z|^2)}{(1+|z|^2)^{\frac{n+2s}{2}}}\,dz.
\]
Therefore, by the definition of $b_1$,
\[
b_1=p_{n,s}\,I'(s)=\frac{I'(s)}{I(s)}.
\]
On the other hand, since $p_{n,s}=I(s)^{-1}$,
\[
\frac{\partial_s p_{n,s}}{p_{n,s}}
=\partial_s\big(\ln p_{n,s}\big)
=\partial_s\big(-\ln I(s)\big)
=-\frac{I'(s)}{I(s)}
=-b_1.
\]
This proves the claim, we complete the proof.
\end{proof}
	
	We now turn to the corresponding extension problem and complete the proof.

\begin{proof}[\bf Proof of Theorem \ref{pr 2.1}]
 Note that  
\begin{equation}\label{vsws}
    \partial_s w_s  =   \frac{ \partial_s p_{n,s}}{p_{n,s}}w_s  +  v_s 
\end{equation}
	and
	\begin{align*}
		\lim_{t\to0^+} \partial_s w_s(x,t) &=\lim_{t\to0^+}  \frac{ \partial_s p_{n,s}}{p_{n,s}}w_s(x,t) +\lim_{t\to0^+}  v_s(x,t)=   \frac{ \partial_s p_{n,s}}{p_{n,s}} u(x)+  b_1 u(x) =0.
	\end{align*}
	
	Observe that 
	\begin{align}\label{pot 2.1}
		v_s (x,t)= t^{2s}(2\ln t) w_s(x,t)+t^{2s} \bar v_s(x,t),   
	\end{align}
	where 
	$$\bar v_s=p_{n,s} \int_{\R^n} \frac{  -\ln(|x-y|^2+t^2)   }{(|x-y|^2+t^2)^{\frac{n+2s}{2}}}u(y)dy. $$
	Moreover, we see that 
	\begin{align*}
		t^{-2s} \big(v_s  (x,t)-v_s(x,0)\big) &=p_{n,s}   \int_{\R^n} \frac{2\ln t-\ln(|x-y|^2+t^2)   }{(|x-y|^2+t^2)^{\frac{n+2s}{2}}}\big(u(y)-u(x)\big)dy
		\\& = p_{n,s}(2\ln t) \int_{\R^n} \frac{ 1  }{(|x-y|^2+t^2)^{\frac{n+2s}{2}}}\big(u(y)-u(x)\big)dy + 
		\\& \quad\ p_{n,s}\int_{\R^n} \frac{-\ln(|x-y|^2+t^2)   }{(|x-y|^2+t^2)^{\frac{n+2s}{2}}}\big(u(y)-u(x)\big)dy
		\\& =(2\ln t)\,t^{-2s}\big(w_s(x,t)-w_s(x,0)\big)
+  \\&\quad \ p_{n,s}\int_{\R^n} \frac{-\ln(|x-y|^2+t^2)}{(|x-y|^2+t^2)^{\frac{n+2s}{2}}}\big(u(y)-u(x)\big)\,dy,
	\end{align*}
    and
    	\begin{align*}
		&\lim_{t\to0^+}{\bf d}_{s} t^{-2s}\Big(w_s(x,t)-w_s(x,0)\Big)\\=&{\bf d}_{s} p_{n,s} \lim_{t\to0^+}   t^{-2s} \Big( \int_{\R^n} \frac{  t^{2s} u(\tilde x)  }{(|x-\tilde x|^2+t^2)^{\frac{n+2s}{2}}} d\tilde x- \int_{\R^n} \frac{  t^{2s}   }{(|z|^2+t^2)^{\frac{n+2s}{2}}} dz u(x)\Big)
		\\=&{\bf d}_{s} p_{n,s} \lim_{t\to0^+}\int_{\R^n} \frac{   u(\tilde x)-u(x)  }{(|x-\tilde x|^2+t^2)^{\frac{n+2s}{2}}} d\tilde x
		\\=&- (-\Delta)^s u(x).
	\end{align*}

	It is known that 
	$$\lim_{t\to 0^+} w_s(x,t)=u(x)$$
	and 
	$$\lim_{t\to 0^+}{\bf d}_s t^{1-2s} \partial_t w_s(x,t)=-(-\Delta)^s u(x).$$
	Then we obtain that 
	\begin{align*}
		\partial_s  (-\Delta)^s u(x) &= - \lim_{t\to0^+}  \big[   \partial_s  \big({\bf d}_s t^{-2s}   \big(w_s(x,t)-w_s(x,0)\big) \big)  \big]
		\\[1mm]&=- \lim_{t\to0^+}   \Big[\left\{{\bf d}_s'  +{\bf d}_s(-2\ln t) +{\bf d}_s \frac{p_{n,s}'}{p_{n,s}}   \right\}t^{-2s} \big(w_s(x,t)-w_s(x,0)\big)+
		\\[1mm]& \qquad{\bf d}_s  t^{-2s}\big(v_s(x,t)-v_s(x,0)\big)  \Big]
		\\[1mm]&=- \lim_{t\to0^+}   \Big[\big({\bf d}_s'     +{\bf d}_s \frac{p_{n,s}'}{p_{n,s}}   \big)t^{-2s} \big(w_s(x,t)-w_s(x,0)\big) +
		\\[1mm]&\qquad  {\bf d}_s   p_{n,s}\int_{\R^n} \frac{-\ln(|x-y|^2+t^2)   }{(|x-y|^2+t^2)^{\frac{n+2s}{2}}}\big(u(y)-u(x)\big)dy\Big]
		\\[1mm]&= k_s\ (-\Delta)^s u+c_{n,s} \cL_1 u,
	\end{align*}
	where 
	$$k_s= \frac{{\bf d}_s'}{{\bf d}_s}+ \frac{ \partial_s p_{n,s}}{p_{n,s}}.$$

	Direct computation shows that 
	$$\frac{{\bf d}_s'}{{\bf d}_s} = \ln 4 + \psi(s)+\psi(1-s)+\frac1s  $$
	and 
	\begin{align*}  
		\frac{ \partial_s p_{n,s}}{p_{n,s}}
		= - \psi(s)+\psi(\frac{n+2s}{2}).
	\end{align*}
	Thus, we obtain that 
	$$k_s= \ln 4 +\frac1s +\psi(1-s)  +\psi(\frac{n+2s}{2})=b_{n,s},$$
	then we have
    \[\partial_s  (-\Delta)^s u(x)=	(-\Delta)^{s+\Log} u(x).\]

  Since $w_s$ solves
\[
\div_{(x,t)}\!\big(t^{1-2s}\nabla_{(x,t)} w_s\big)=0 \qquad\text{in }\R^{n+1}_+,
\]
differentiating with respect to $s$ gives
\begin{align*}
0
&=\partial_s \div_{(x,t)}\!\big(t^{1-2s}\nabla_{(x,t)} w_s\big)\\
&=\div_{(x,t)}\!\big(\partial_s(t^{1-2s})\,\nabla_{(x,t)} w_s\big)
  +\div_{(x,t)}\!\big(t^{1-2s}\nabla_{(x,t)}(\partial_s w_s)\big)\\
&=\div_{(x,t)}\!\big(t^{1-2s}(-2\ln t)\nabla_{(x,t)} w_s\big)
  +\div_{(x,t)}\!\big(t^{1-2s}\nabla_{(x,t)}(\partial_s w_s)\big).
\end{align*}
Using the product rule and $\div_{(x,t)}(t^{1-2s}\nabla_{(x,t)} w_s)=0$, we further obtain
\[
\div_{(x,t)}\!\big(t^{1-2s}(-2\ln t)\nabla_{(x,t)} w_s\big)
=\nabla_{(x,t)}(-2\ln t)\cdot t^{1-2s}\nabla_{(x,t)}w_s
=-2t^{-2s}\partial_t w_s.
\]
Finally, by (\ref{vsws}) we know $v_s=\partial_s w_s-\frac{p'_{n,s}}{p_{n,s}}\,w_s$ and
$\div_{(x,t)}(t^{1-2s}\nabla_{(x,t)} w_s)=0$, it follows that
\[
\div_{(x,t)}\!\big(t^{1-2s}\nabla_{(x,t)}(\partial_s w_s)\big)
=\div_{(x,t)}\!\big(t^{1-2s}\nabla_{(x,t)} v_s\big).
\]
Consequently,
\[
\div_{(x,t)}\!\big(t^{1-2s}\nabla_{(x,t)} v_s\big)
=2t^{-2s}\partial_t w_s
\qquad\text{in }\R^{n+1}_+.
\]
As a consequence, $v_s$ verifies the equation  (\ref{eq 1.1-ext})
	and  
	$$2t^{-2s} \partial_t w_s(x,t) =4s t^{-1-2s} w_s(x,t)+2(n+2s)p_{n,s}t  \int_{\R^n} \frac{  u(y)}{(|x-y|^2+t^2)^{\frac{n+2s}{2}+1}}dy, $$
	which ends the proof.
	\end{proof}

	\setcounter{equation}{0}
	\section{Poisson Problem}
    In this section, we study the Poisson problem associated with the fractional-logarithmic operator, focusing on existence and regularity of weak solutions. The existence theory is based mainly on the Lax--Milgram framework, while the regularity part relies on a Moser iteration argument. To carry this out, we first establish the required structural properties of the corresponding function spaces and energy forms.

	\subsection{Properties of Functional Spaces}
We first clarify the relationship between the spectral fractional-logarithmic space and the energy fractional-logarithmic space, which helps establish a transparent link between functional calculus and the Fourier-transform approach.

\begin{proof}[\bf Proof of Proposition \ref{prop:Hslog-vs-form}:]

Set $z=x-y$ and arguing as in the standard fractional
Sobolev case, see \cite[Proposition 3.4]{EGE} one gets
\begin{equation}\label{eq:fourier-form}
\iint_{\R^n\times\R^n}\bigl(u(x)-u(y)\bigr)^2\,
	{\bf k}_{s+\Log,+}(x-y)\,dx\,dy=\int_{\R^n} m(\xi)\,|\widehat u(\xi)|^2\,d\xi,
\end{equation}
where
\[m(\xi):=2\int_{\R^n}\bigl(1-\cos(\xi\cdot z)\bigr)\,{\bf k}_{s+\Log,+}(z)\,dz.\]
Since ${\bf k}_{s+\Log,+}$ is radial and nonnegative, $m(\xi)=m(|\xi|)\ge0$.

Write $r:=|\xi|$. Using $1-\cos(\xi\cdot z)\le \min\{2,\,|\xi\cdot z|^2\}\le \min\{2,\,r^2|z|^2\}$
and that ${\bf k}_{s+\Log,+}$ is supported in $\{|z|<1\}$, we split
\[
m(\xi)\le 2\!\!\int_{|z|<1/r}\!\! r^2|z|^2\,{\bf k}_{s+\Log,+}(z)\,dz
+4\!\!\int_{1/r\le |z|<1}\!\!{\bf k}_{s+\Log,+}(z)\,dz.
\]
A direct radial computation gives, for $r\ge2$,
{\small\[
\int_{|z|<1/r} r^2|z|^2\,|z|^{-n-2s}(-\ln|z|)\,dz
\ \lesssim\ r^{2s}\ln r,\
\int_{1/r\le |z|<1} |z|^{-n-2s}(-\ln|z|)\,dz
\ \lesssim\ r^{2s}\ln r,
\]}
hence $m(\xi)\lesssim r^{2s}\ln r$ for $r\ge2$. For $0<r\le 2$ we have $m(\xi)\lesssim 1+r^{2}$, since $1-\cos(\xi\cdot z)\le C r^{2}|z|^{2}$ for
$|z|<1$ and
\[
\int_{|z|<1}|z|^{2}\,|z|^{-n-2s}(-\ln|z|)\,dz<\infty \qquad s\in(0,1).
\]
Combining both regimes yields the uniform bound
\begin{equation}\label{eq:m-upper}
m(\xi)\ \le\ C\bigl(1+|\xi|^{2s}(1+|\ln|\xi||)\bigr)\qquad\text{for all }\xi\in\R^n.
\end{equation}
Therefore,
\[
	\mathcal H^{s+\Log}(\R^n)
	:=\Bigl\{
	u\in L^2(\R^n):\ 
	\int_{\R^n}|\xi|^{2s}\ln(|\xi|^2)|\widehat u(\xi)|^2\,d\xi<\infty
	\Bigr\},
	\]
 proving the continuous embedding
$H^{s+\Log}(\R^n)\hookrightarrow\mathcal H^{s+\Log}(\R^n)$.

The form norm only controls one logarithmic power in the multiplier,
whereas $H^{s+\Log}$ requires square integrability with weight $\ln^2$. Thus one can construct
$u$ with $\int |\xi|^{2s}|\ln|\xi||\,|\widehat u|^2<\infty$ but
$\int |\xi|^{4s}\ln^2(|\xi|^2)\,|\widehat u|^2=\infty$, showing that the reverse inclusion
fails in general.
\end{proof}

	\begin{proposition}\label{prop:Hslog-Hilbert}
		The space $\mathcal H^{s+\Log}(\R^n)$, endowed with the inner product
		{\small \begin{align*}
		    \langle u,v\rangle_{\mathcal H^{s+\Log}}
			:=\int_{\R^n}u(x)v(x)\,dx
			+ \iint_{\R^n\times\R^n}
			\bigl(u(x)-u(y)\bigr)\bigl(v(x)-v(y)\bigr)\,
			{\bf k}_{s+\Log,+}(x-y)\,dx\,dy,
		\end{align*}}
		is a Hilbert space. In particular,
		\[
		\|u\|_{\mathcal H^{s+\Log}}^{\,2}
		=\langle u,u\rangle_{\mathcal H^{s+\Log}},
		\qquad u\in\mathcal H^{s+\Log}(\R^n).
		\]
	\end{proposition}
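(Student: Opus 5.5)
The plan is to verify the inner product axioms and then prove completeness by a Fatou-type argument, in the same way as for the classical Gagliardo space $H^s(\R^n)$.

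First, $\langle\cdot,\cdot\rangle_{\mathcal H^{s+\Log}}$ is well defined on $\mathcal H^{s+\Log}(\R^n)$. Since ${\bf k}_{s+\Log,+}\ge 0$, the Cauchy--Schwarz inequality in $L^2(\R^{2n},{\bf k}_{s+\Log,+}(x-y)\,dx\,dy)$ applied to $U(x,y):=u(x)-u(y)$ and $V(x,y):=v(x)-v(y)$ gives absolute convergence of the double integral, bounded by $[u]_{s+\Log,+}[v]_{s+\Log,+}$. Bilinearity and symmetry are immediate. Positivity, $\langle u,u\rangle\ge \|u\|_{L^2}^2$, vanishes only when $u=0$ a.e. The same Cauchy--Schwarz bound shows that $\mathcal H^{s+\Log}(\R^n)$ is a vector space, because $[u+v]_{s+\Log,+}\le [u]_{s+\Log,+}+[v]_{s+\Log,+}$. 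Hence it is a pre-Hilbert space, and $\|u\|^2_{\mathcal H^{s+\Log}}=\langle u,u\rangle$ holds by definition.

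For completeness, let $(u_k)$ be Cauchy in $\mathcal H^{s+\Log}(\R^n)$. It is then Cauchy in $L^2(\R^n)$, so $u_k\to u$ in $L^2$. Passing to a subsequence, $u_{k_j}\to u$ a.e. in $\R^n$, and therefore $U_{k_j}(x,y):=u_{k_j}(x)-u_{k_j}(y)\to u(x)-u(y)$ for a.e. $(x,y)\in\R^{2n}$.

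Fix $\varepsilon>0$ and choose $N$ with $\|u_k-u_m\|_{\mathcal H^{s+\Log}}<\varepsilon$ for $k,m\ge N$. Applying Fatou's lemma to the nonnegative integrand $(U_k-U_{k_j})^2{\bf k}_{s+\Log,+}(x-y)$ as $j\to\infty$ yields
\[
[u_k-u]_{s+\Log,+}^2\le\liminf_{j\to\infty}[u_k-u_{k_j}]_{s+\Log,+}^2\le\varepsilon^2,\qquad k\ge N .
\]
In particular $u=(u-u_N)+u_N\in\mathcal H^{s+\Log}(\R^n)$, and $\|u_k-u\|_{\mathcal H^{s+\Log}}\to 0$. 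The whole sequence converges (not just the subsequence) because it is Cauchy.

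The only delicate point is the a.e. convergence on the product space. A null set $Z\subset\R^n$ produces a null set $(Z\times\R^n)\cup(\R^n\times Z)$ in $\R^{2n}$, so pointwise convergence holds off this set. The argument uses only the nonnegativity and measurability of ${\bf k}_{s+\Log,+}$; the logarithmic factor plays no special role.

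Alternatively, one could use the Fourier characterization from Proposition~\ref{prop:Hslog-vs-form}, which identifies the space with a weighted $L^2$ space in $\xi$. That space is automatically complete, but the direct Fatou argument avoids relying on the sign of $\ln|\xi|$ at low frequencies, so I would use it.
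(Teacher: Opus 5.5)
Your proposal is correct and follows essentially the same route as the paper: Cauchy--Schwarz in the weighted $L^2$ space for well-definedness, the $L^2$ term for definiteness, and completeness via the $L^2$-limit, an a.e.\ convergent subsequence, and Fatou's lemma applied to $[u_k-u_m]_{s+\Log,+}^2$. The one minor difference is how you show $u\in\mathcal H^{s+\Log}(\R^n)$: you deduce it from $[u_k-u]_{s+\Log,+}\le\varepsilon$ together with the vector-space property, whereas the paper first applies Fatou to the bounded sequence $[u_k]_{s+\Log,+}$ to get $[u]_{s+\Log,+}<\infty$.
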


\begin{proof}
		For $u,v\in\mathcal H^{s+\Log}(\mathbb R^n)$, the bilinear form is well-defined. Indeed, by Cauchy--Schwarz,
		\[
		\left|
		\iint (u(x)-u(y))(v(x)-v(y))\,{\bf k}_{s+\Log,+}(x-y)\,dx\,dy
		\right|
		\le \,[u]_{s+\Log,+}\,[v]_{s+\Log,+}<\infty.
		\]
	If $\langle u,u\rangle_{\mathcal H^{s+\Log}}=0$, then
		$\|u\|_{L^2(\mathbb R^n)}^{\,2}=0$, hence $u=0$ a.e.\ in $\mathbb R^n$.
		Therefore, it is an inner product.
		
		Let $(u_k)_{k\in\mathbb N}$ be a Cauchy sequence in
		$\mathcal H^{s+\Log}(\mathbb R^n)$. Then it is Cauchy in $L^2(\mathbb R^n)$,
		so there exists $u\in L^2(\mathbb R^n)$ such that $u_k\to u$ in $L^2$. Moreover, since $(u_k)$ is Cauchy, the sequence is bounded in the
		$\mathcal H^{s+\Log}$-norm; thus,
		\[
		\sup_k [u_k]_{s+\Log,+}<\infty.
		\]
		Passing to a subsequence (not relabeled), we may assume $u_k(x)\to u(x)$
		for a.e.\ $x\in\mathbb R^n$. By Fatou's lemma and the nonnegativity of the
		kernel ${\bf k}_{s+\Log,+}$, we obtain
		\begin{align*}
		 &   \iint_{\mathbb R^n\times\mathbb R^n}
		\bigl(u(x)-u(y)\bigr)^2\,{\bf k}_{s+\Log,+}(x-y)\,dx\,dy
		\\\le& \liminf_{k\to\infty}
		\iint_{\mathbb R^n\times\mathbb R^n}
		\bigl(u_k(x)-u_k(y)\bigr)^2\,{\bf k}_{s+\Log,+}(x-y)\,dx\,dy,
		\end{align*}
		which shows that $[u]_{s+\Log,+}<\infty$, i.e.\ $u\in\mathcal H^{s+\Log}(\mathbb R^n)$.
		
		Finally, we show that $u_k\to u$ in $\mathcal H^{s+\Log}$. Fix $\varepsilon>0$.
		Since $(u_k)$ is Cauchy in $\mathcal H^{s+\Log}$, there exists $N$ such that
		for all $k,m\ge N$,
		\[
		\|u_k-u_m\|_{L^2}^2+[u_k-u_m]_{s+\Log,+}^2<\varepsilon.
		\]
		Letting $m\to\infty$ and using $u_m\to u$ in $L^2$ we get
		$\|u_k-u\|_{L^2}^2\le \varepsilon$ for all $k\ge N$. Moreover, by Fatou's lemma, we obtain
		\[
		[u_k-u]_{s+\Log,+}^2
		\le \liminf_{m\to\infty}[u_k-u_m]_{s+\Log,+}^2
		\le \varepsilon,
		\qquad k\ge N.
		\]
		Hence $\|u_k-u\|_{\mathcal H^{s+\Log}}^2
		=\|u_k-u\|_{L^2}^2+[u_k-u]_{s+\Log,+}^2\le 2\varepsilon$ for all $k\ge N$.
		This proves that $\mathcal H^{s+\Log}(\mathbb R^n)$ is complete, and therefore
		a Hilbert space.
        \end{proof}
        
We now prove the classical Poincar\'e inequality associated with the fractional-logarithmic operator.

\begin{proof}[\bf Proof of Proposition \ref{prop:Poincare-Hslog-plus11}:]

		Set
		\[
		K(z):={\bf k}_{s+\Log,+}(z)
		=c_{n,s}|z|^{-n-2s}(-\ln|z|)_+,
		\quad z\in\mathbb R^n\setminus\{0\},
		\]
		then
		\[K(z)\ \ge\ c_{n,s}|z|^{-n-2s}\,\mathbf 1_{\{|z|\le e^{-1}\}}.\]
		Moreover, $K\ge0$ and $K$ is integrable at infinity.
		Therefore, similar to the proof of \cite[Lemma~1.28]{Molica-Bisci-Radulescu-Servadei}, we obtain
		\begin{equation}\label{eq:Sobolev-from-K}
			\|u\|_{L^{2_s^*}(\Omega)}^{\,2}
			=\|u\|_{L^{2_s^*}(\mathbb R^n)}^{\,2}
			\ \le\ C_1
			\iint_{\mathbb R^n\times\mathbb R^n}\bigl(u(x)-u(y)\bigr)^2 K(x-y)\,dx\,dy
			= C_1 [u]_{s+\Log,+}^2,
		\end{equation}
		for some $C_1=C_1(n,s)>0$.
		
		Finally, since $\Omega$ is bounded, H\"older's inequality gives
		\[
		\|u\|_{L^2(\Omega)}^2
		\le |\Omega|^{1-\frac{2}{2_s^*}}\,
		\|u\|_{L^{2_s^*}(\Omega)}^2
		=|\Omega|^{\frac{2s}{n}}\,
		\|u\|_{L^{2_s^*}(\Omega)}^2.
		\]
		Combining this with \eqref{eq:Sobolev-from-K} we obtain \eqref{eq:Poincare-Hslog-plus11}
		with $C:=|\Omega|^{\frac{2s}{n}}C_1$.
        \end{proof}

We now prove the property of energy-forms .

\begin{proof}[\bf Proof of Proposition \ref{embedding-1}:]

(i) Note that 
	\begin{align*}
		\mathcal{E}_{-}(u,u)&=  c_{n,s}\int_{\R^n}\int_{\R^n} \frac{\big(u(x)-u(y)\big)^2}{  |x-y|^{n+2s} }  (-\ln |x-y|\big)_{-} dx dy
		\\[1mm] &\le 4c_{n,s} \int_{\R^n} u(x)^2 \int_{\R^n}   \frac{ (\ln |x-y|\big)_+}{|x-y|^{n+2s}}   dy dx 
		\\[1mm] &=4 c_{n,s} \left|\mathbb{S}^{n-1}\right| \|u\|_{L^2(\Omega)}^2 \int_1^{\infty}\frac{\ln r}{r^{2s+1}}dr
		\\[1mm] &=\frac{1}{s^2} c_{n,s} \left|\mathbb{S}^{n-1}\right| \|u\|_{L^2(\Omega)}^2, 
	\end{align*}
	by using the fact
	\[\int_1^{\infty}\frac{\ln r}{r^{2s+1}}dr=\frac{1}{4s^2}.\]
	
	(ii)  Recall that for $u\in \mathcal H^{s+\Log}_0(\Omega)$ one has $u\equiv 0$ a.e.\ in $\Omega^c$ and
	\begin{align*}
		\mathcal E_{+}(u,u)-\mathcal E_{-}(u,u)
		&=c_{n,s}\iint_{\mathbb R^n\times\mathbb R^n}
		\frac{(u(x)-u(y))^2}{|x-y|^{n+2s}}\big(-\ln|x-y|\big)\,dx\,dy \\
		&=c_{n,s}\iint_{\Omega\times\Omega}
		\frac{(u(x)-u(y))^2}{|x-y|^{n+2s}}\big(-\ln|x-y|\big)\,dx\,dy \\
		&\quad +2c_{n,s}\int_{\Omega}u(x)^2\left(\int_{\Omega^c}
		\frac{-\ln|x-y|}{|x-y|^{n+2s}}\,dy\right)\,dx.
	\end{align*}
	
	Fix $x\in\Omega$ and write $D:=\operatorname{dima}\Omega< 1$. Since $|x-z|\le D$ for every $z\in\Omega$,
	we have the inclusion $\Omega\subset B_D(x)$, hence
	\[
	\Omega^c \supset \mathbb R^n\setminus B_D(x).
	\]
	We split the exterior integral according to the sign of $-\ln|x-y|$:
	\begin{align*}
		\int_{\Omega^c}\frac{-\ln|x-y|}{|x-y|^{n+2s}}\,dy
		&=
		\int_{\Omega^c\cap\{|x-y|<1\}}\frac{-\ln|x-y|}{|x-y|^{n+2s}}\,dy
		-\int_{\Omega^c\cap\{|x-y|>1\}}\frac{\ln|x-y|}{|x-y|^{n+2s}}\,dy.
	\end{align*}
	Note that $\Omega^c\cap\{|x-y|<1\}\supset\{D<|x-y|<1\}$,  we obtain
	\[
	\int_{\Omega^c\cap\{|x-y|<1\}}\frac{-\ln|x-y|}{|x-y|^{n+2s}}\,dy
	\ \ge\
	\int_{D<|x-y|<1}\frac{-\ln|x-y|}{|x-y|^{n+2s}}\,dy
	=|\mathbb S^{n-1}|\int_{D}^{1}\frac{-\ln r}{r^{2s+1}}\,dr.
	\]
	Moreover, since $\Omega^c\cap\{|x-y|>1\}\subset\{|x-y|>1\}$ and the integrand is nonnegative,
	\[
	\int_{\Omega^c\cap\{|x-y|>1\}}\frac{\ln|x-y|}{|x-y|^{n+2s}}\,dy
	\ \le\
	\int_{|x-y|>1}\frac{\ln|x-y|}{|x-y|^{n+2s}}\,dy
	=|\mathbb S^{n-1}|\int_{1}^{\infty}\frac{\ln r}{r^{2s+1}}\,dr.
	\]
	Moreover, since $D<1,$
	\[\iint_{\Omega\times\Omega}
	\frac{(u(x)-u(y))^2}{|x-y|^{n+2s}}\big(-\ln|x-y|\big)\,dx\,dy\ge 0.\]
	Thus, we infer
	\begin{align*}
		\mathcal E_{+}(u,u)-\mathcal E_{-}(u,u) \ge&\
		2c_{n,s}|\mathbb S^{n-1}|\Bigg(\int_{D}^{1}\frac{-\ln r}{r^{2s+1}}\,dr
		-\int_{1}^{\infty}\frac{\ln r}{r^{2s+1}}\,dr\Bigg)\|u\|_{L^2(\Omega)}^2\\=&\frac{1}{s}c_{n,s}|\mathbb S^{n-1}|D^{-2s}(\ln \frac{1}{D}-\frac{1}{2s})\|u\|_{L^2(\Omega)}^2.
	\end{align*}
	
	(iii) For $\operatorname{dima}\Omega<e^{-\frac{1}{2s}}$ and $b_{n,s}\geq 0$, it is obvious that $\cE_{s+\Log}(u,u)\ge 0$ and $\cE_{s+\Log}(u,u)= 0$ implies $u=0.$ By the proof of (ii), we obtain
	\[\cE_{+}(|u|,|u|)-\cE_{-}(|u|,|u|)\le \cE_{+}(u,u)-\cE_{-}(u,u).\]
	Therefore, 
	\begin{align*}
		0\le \cE_{s+\Log}(|u|,|u|)  &=   \mathcal{E}_{+}(|u|,|u|)-\mathcal{E}_{-}(|u|,|u|) +\frac{b_{n,s}c_{n,s}}{2} \cE_{s}(|u|,|u|)
		\\[1mm] &\leq  \cE_{+}(u,u)-\cE_{-}(u,u)+\frac{b_{n,s}c_{n,s}}{2} \cE_{s}(u,u). 
	\end{align*}
	Moreover, equality holds in (\ref{eq:modulus-invariance}) if and only if $u$ does not change sign. 
\end{proof}
	
Next, we quantify the relation between the fractional-logarithmic energy and the pure fractional energy, making explicit the effect of the logarithmic factor; this estimate will be a key ingredient in proving compactness even at the critical exponent.

	\begin{lemma}
		\label{lem:Es-controlled}
		Let  $0<r<1$. Then for every $u\in \mathcal H^{s+\Log}_0(\Omega)$,
		\begin{equation}\label{eq:Es-split-control}
			\mathcal E_s(u,u)
			\le \frac{1}{c_{n,s}\,\ln(1/r)}\,\norm{u}_{\cH^{s+\Log}_{0}(\Omega)}^2
			+ \frac{2}{s}|\mathbb{S}^{n-1}|\,r^{-2s}\,\|u\|_{L^2(\Omega)}^2
		\end{equation}
	\end{lemma}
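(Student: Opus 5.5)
We split the double integral defining $\mathcal E_s(u,u)$ according to whether $|x-y|<r$ or $|x-y|\ge r$:
\[
\mathcal E_s(u,u)=\iint_{\{|x-y|<r\}}\frac{(u(x)-u(y))^2}{|x-y|^{n+2s}}\,dx\,dy+\iint_{\{|x-y|\ge r\}}\frac{(u(x)-u(y))^2}{|x-y|^{n+2s}}\,dx\,dy=:I_1+I_2 .
\]
The idea is that the near-diagonal part is dominated by the logarithmic kernel, since the logarithm is large there. The far part is bounded directly by the $L^2$ norm, because the kernel is integrable away from the origin.

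For $I_1$: when $|x-y|<r<1$ we have $-\ln|x-y|\ge \ln(1/r)>0$, so $(-\ln|x-y|)_+\ge\ln(1/r)$. Therefore
\[
\frac{1}{|x-y|^{n+2s}}\le \frac{1}{c_{n,s}\ln(1/r)}\,{\bf k}_{s+\Log,+}(x-y)\qquad\text{for }|x-y|<r .
\]
Integrating against $(u(x)-u(y))^2$ and enlarging the domain of integration to all of $\R^n\times\R^n$ (the kernel ${\bf k}_{s+\Log,+}$ is nonnegative) gives
\[
I_1\le \frac{1}{c_{n,s}\ln(1/r)}\,[u]_{s+\Log,+}^2=\frac{1}{c_{n,s}\ln(1/r)}\,\norm{u}_{\cH^{s+\Log}_0(\Omega)}^2 .
\]
This is exactly the first term in \eqref{eq:Es-split-control}.

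For $I_2$: we use $(u(x)-u(y))^2\le 2u(x)^2+2u(y)^2$ together with the symmetry in $(x,y)$, which gives
\[
I_2\le 4\int_{\R^n}u(x)^2\int_{|x-y|\ge r}\frac{dy}{|x-y|^{n+2s}}\,dx .
\]
The inner integral equals $|\mathbb S^{n-1}|\int_r^\infty \rho^{-1-2s}\,d\rho=|\mathbb S^{n-1}|\,r^{-2s}/(2s)$. Since $u=0$ outside $\Omega$, this yields
\[
I_2\le \frac{2}{s}\,|\mathbb S^{n-1}|\,r^{-2s}\,\|u\|_{L^2(\Omega)}^2 ,
\]
which matches the second term with exactly the stated constant.

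Adding the two bounds proves the lemma. There is no real obstacle in this argument. The only points that need care are that $r<1$, so that $\ln(1/r)>0$ and the positive part of the logarithm is active on the whole near region, and that the factor $4$ from the elementary inequality combined with the $1/(2s)$ from the radial integral produces the constant $2/s$. Finiteness of $\mathcal E_s(u,u)$ comes out of the estimate itself, since both right-hand terms are finite for $u\in\mathcal H^{s+\Log}_0(\Omega)$.
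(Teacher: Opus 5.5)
Your proposal is correct and follows essentially the same route as the paper. Both split at radius $r$, compare $|z|^{-n-2s}$ pointwise with the logarithmic kernel on $\{|z|<r\}$, and bound the far field via $(u(x)-u(y))^2\le 2u(x)^2+2u(y)^2$, which gives the same constant $\frac{2}{s}|\mathbb S^{n-1}|r^{-2s}$.
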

	
	\begin{proof}
		Split the fractional energy into near and far fields:
		{\small \[
		\mathcal E_s(u,u)
		=\iint_{|x-y|<r}\frac{(u(x)-u(y))^2}{|x-y|^{n+2s}}\,dx\,dy
		+\iint_{|x-y|\ge r}\frac{(u(x)-u(y))^2}{|x-y|^{n+2s}}\,dx\,dy
		=:I_{\rm near}+I_{\rm far}.
		\]}
		
		For $|z|<r$, we have $(-\ln|z|)_+=\ln(1/|z|)\ge \ln(1/r)$, hence
		\[
		|z|^{-n-2s}\le \frac{1}{\ln(1/r)}\,|z|^{-n-2s}(-\ln|z|)_+
		=\frac{1}{c_{n,s}\,\ln(1/r)}\,{\bf k}_{s+\Log,+}(z).
		\]
		Therefore,
		{\small\[
		I_{\rm near}\le \frac{1}{c_{n,s}\,\ln(1/r)}
		\iint_{\mathbb R^n\times\mathbb R^n}(u(x)-u(y))^2\,{\bf k}_{s+\Log,+}(x-y)\,dx\,dy
		=\frac{1}{c_{n,s}\,\ln(1/r)}\,[u]_{s+\Log,+}^2.
		\]}
		
		For the far field, we use the integrability of
		$|z|^{-n-2s}$ on $\{|z|\ge r\}$:
		\begin{align*}
		    I_{\rm far}\le& 2\iint_{|x-y|\ge r}\frac{u(x)^2+u(y)^2}{|x-y|^{n+2s}}\,dx\,dy
		\\=&4\int_{\mathbb R^n}u(x)^2\Bigl(\int_{|z|\ge r}|z|^{-n-2s}\,dz\Bigr)\,dx
		=\frac{2}{s}|\mathbb{S}^{n-1}|\,r^{-2s}\,\|u\|_{L^2(\Omega)}^2.
		\end{align*}
		Combining the two estimates yields \eqref{eq:Es-split-control}.
	\end{proof}

We now prove the embedding properties, precisely describing the relation between the fractional and fractional-logarithmic spaces, and showing that the embedding is compact for all \(1\le p\le 2_s^*\).

\begin{proof}[\bf Proof of Proposition~\ref{embedding}]

	(i) For any $\epsilon\in(0,1-s)$,  there exists $r_\epsilon\in(0,e^{-\frac{1}{2s}})$ such that 
	$$|z|^{-2s-n}(-2\ln |z|)_{+} \le |z|^{-2(s+\epsilon)-n},\ 2s\ln r_\epsilon+1>0 \quad {\rm for}\ \,  0<|z|<r_\epsilon. $$
	Then a direct computation shows that 
	\begin{align*}
		&\int_{\{|x-y|<r_\epsilon \}} \frac{ \big(u(x)-u(y)\big)^2 }{ |x-y|^{n+2s}}   (-2\ln |x-y|\big)_+ dx dy \\\le&   \int_{\{|x-y|<r_\epsilon \}} \frac{ \big(u(x)-u(y)\big)^2 }{ |x-y|^{n+2s+2\epsilon}}   dx dy\leq   	\|u\|_{\cH^{s+\epsilon }_{0}(\Omega)}^2
	\end{align*}
	and 
	\begin{align*}
		&\int_{\{|x-y|>r_\epsilon \}} \frac{ \big(u(x)-u(y)\big)^2}{  |x-y|^{n+2s} }  (-\ln |x-y|\big)_+ dx dy
		\\\le& 4\int_{\R^n} u(x)^2 \int_{\{y: |x-y|>r_\epsilon \}}  \frac{ (-\ln |x-y|\big)_+} { |x-y|^{n+2s}  } dy dx 
		\\ =& 4 \left|\mathbb{S}^{n-1}\right|  
		\left(	\frac{1-r_{\epsilon}^{-2s}}{4s^{2}} \;-\; \frac{r_{\epsilon}^{-2s}\ln r_{\epsilon}}{2s}\right)
		\|u\|_{L^2(\Omega)}^2 
		\\ \le& c_\epsilon \|u\|_{L^2(\Omega)}^2, 
	\end{align*}
	where $c_\epsilon>0$ depends on $\epsilon$. 
	Therefore, there exists $C_\epsilon>0$ such that  
	\begin{align*}
		\|u\|_{\cH^{s+\Log}_{0}(\Omega)}^2   & =c_{n,s}  \int_{\{|x-y|<r_\epsilon \}} \frac{ \big(u(x)-u(y)\big)^2 }{ |x-y|^{n+2s}}   (-\ln |x-y|\big)_+ dx dy 
		\\[1mm]\quad &+ c_{n,s} \int_{\{|x-y|>r_\epsilon \}} \frac{ \big(u(x)-u(y)\big)^2}{  |x-y|^{n+2s} }  (-\ln |x-y|\big)_+ dx dy 
		\\[1mm]&\leq c_{n,s}  \|u\|_{\cH^{s+\epsilon }_{0}(\Omega)}^2+c_\epsilon c_{n,s}\|u\|_{L^2(\Omega)}^2
		\\[1mm]&\leq C_\epsilon \|u\|_{\cH^{s+\epsilon}_{0}(\Omega)}^2. 
	\end{align*}
	That is, for any $\epsilon\in(0,1-s)$
	\[\cH^{s+\epsilon }_{0}(\Omega)\subset \mathcal H^{s+\Log}_{0}(\Omega). \]
    Note that
	\begin{align*}
		&\int_{\big\{ |x-y|\geq \frac{1}{e}\big\}} \frac{(u(x)-u(y) )^2}{  |x-y|^{n+2s} }   dx dy 
		\\\le& 4\int_{\R^n} u(x)^2 \int_{\R^n}  \frac{\chi_{(\frac1e,+\infty)}(|x-y|)}{ |x-y|^{n+2s} }  dy dx = \frac{2e^{2s}}{s}\left|\mathbb{S}^{n-1}\right|  \|u\|_{L^2(\Omega)}^2, 
	\end{align*}
	where $\chi_{A}(t)=1$ if $t\in A$ and $\chi_{A}(t)=0$ if $t\not\in A$.
	Since  $|z|^{-2s-n} (-\ln |z|)_{+}\geq |z|^{-2s-n}$ for $0<|z|<\frac1e$,
	then 
{\small \begin{align*}
c_{n,s}\|u\|_{\cH^{s }_{0}(\Omega)}^2
&= c_{n,s}\iint_{\R^n\times\R^n}\frac{(u(x)-u(y))^2}{|x-y|^{n+2s}}\,dx\,dy \\[1mm]
&\le \iint_{\R^n\times\R^n}(u(x)-u(y))^2\,{\bf k}_{s+\Log,+}(x-y)\,dx\,dy
  +\\&\quad c_{n,s}\iint_{\{|x-y|\ge 1/e\}}\frac{(u(x)-u(y))^2}{|x-y|^{n+2s}}\,dx\,dy \\[1mm]
&\le \iint_{\R^n\times\R^n}(u(x)-u(y))^2\,{\bf k}_{s+\Log,+}(x-y)\,dx\,dy
  +c_{n,s}\frac{2e^{2s}}{s}\,|\mathbb{S}^{n-1}|\,\|u\|_{L^2(\Omega)}^2.
\end{align*}}
By Proposition \ref{prop:Poincare-Hslog-plus11}, we obtain $	\cH^{s+\Log}_{0}(\Omega)\ \hookrightarrow\ \cH^{s}_{0}(\Omega).$

(ii) By the embedding properties of $\cH^{s}_{0}(\Omega)$, it suffices to show that the embedding
\begin{displaymath}
    \cH^{s+\Log}_{0}(\Omega)\hookrightarrow L^{2_s^*}(\Omega),
\end{displaymath}
is compact.

		Fix $x_0\in\Omega,\ 0<r<\min\left\{\frac12\dist(x_0,\partial\Omega),1\right\}$ and $u\in \cH^{s+\Log}_{0}(\Omega).$ 
		Choose $\eta\in C_c^\infty(B_{2r}(x_0))$ with $\eta\equiv1$ on $B_r(x_0)$ and 
		$|\eta(x)-\eta(y)|\le C\min \left\{1,|x-y|/r\right\},\left|\eta\right|\le 1$. Set $v:=\eta u$, similar to the proof of Lemma \ref{lem:Es-controlled}, we have
		\[\mathcal E_s(v,v)\ \lesssim\ \frac{1}{\ln(1/r)}	\mathcal{E}_{+}(u;B_{3r}(x_0))+r^{-2s}\,\|u\|_{L^2(\Omega)}^2,\]
		where 
        \[
	\mathcal{E}_{+}(u;B_{3r}(x_0))
	:=\iint_{B_{3r}(x_0)\times B_{3r}(x_0)}(u(x)-u(y))^2\,{\bf k}_{s+\Log,+}(x-y)\,dx\,dy.
	\]
	
		By the fractional Sobolev inequality on $\R^n$ for $0<s<1$,
		$\|v\|_{L^{2_s^*}(\mathbb{R}^n)}\lesssim \mathcal E_s(v,v)$, applied to $v=\eta u$ and using $\eta\equiv1$ on $B_r(x_0)$, we obtain
		\begin{equation}\label{eq:local-Lp}
			\|u\|_{L^{2_s^*}(B_r(x_0))}^2
			\ \lesssim\ \mathcal E_s(v,v)
			\ \lesssim\ \frac{1}{\ln(1/r)}\mathcal{E}_{+}(u;B_{3r}(x_0))
			+r^{-2s}\,\|u\|_{L^2(\Omega)}^2 .
		\end{equation}
		
		Let $(u_k)$ be a bounded sequence in $\cH^{s+\Log}_{0}(\Omega)$.
		By Proposition \ref{embedding}, $(u_k)$ is bounded in $H^s_0(\Omega)$; 
		the fractional Rellich theorem yields (up to a subsequence) $u_k\to u$ in $L^2(\Omega)$ and a.e.\ in $\Omega$ where  $u\in \cH^{s+\Log}_{0}(\Omega).$  Fix $\varepsilon\in(0,1),$ choose $r$ so small that
		\begin{equation}\label{eq:choose-r}
			\frac{1}{\ln(1/r)}\le \varepsilon.
		\end{equation}
		Cover $\Omega$ by finitely many balls $\{B_r(x_j)\}_{j=1}^l$ with bounded overlap.
		
		Applying \eqref{eq:local-Lp} to $u_k-u$ and summing over $j$, we get
		\[
		\|u_k-u\|_{L^{2_s^*}(\Omega)}^2
		\ \lesssim\ \frac{1}{\ln(1/r)}\mathcal{E}_{+}(u_k-u;\Omega)
		+ r^{-2s}\,\|u_k-u\|_{L^2(\Omega)}^2 .
		\]
		By the boundedness of $(u_k)$ in $\cH^{s+\Log}_{0}(\Omega)$ and \eqref{eq:choose-r}, the first term is $\lesssim \varepsilon$ uniformly in $k$; the second term tends to $0$ as $k\to\infty$ because $u_k\to u$ in $L^2(\Omega)$. Hence
		\[
		\limsup_{k\to\infty}\|u_k-u\|_{L^{2_s^*}(\Omega)}^2\ \lesssim\ \varepsilon .
		\]
		Since $\varepsilon>0$ is arbitrary, $u_k\to u$ in $L^{2_s^*}(\Omega)$, i.e.\ the embedding $\cH^{s+\Log}_{0}(\Omega)\hookrightarrow L^{2_s^*}(\Omega)$ is compact.
\end{proof}

	\subsection{Existence and Regularity of Solutions}
	Next, we study the existence, uniqueness, and regularity properties of weak
	solutions to the nonlocal Poisson problem \eqref{eq:poisson}, governed by the
	fractional--logarithmic operator $(-\Delta)^{s+\Log}$, where
	$w\in L^{\infty}(\Omega)$.
	
	\begin{definition}
		\label{def:weak-solution}
		Assume that $f\in \big(\mathcal H^{s+\Log}_{0}(\Omega)\big)^{\!*}$ and
		$w\in L^\infty(\Omega)$. We say that
		$u\in \mathcal H^{s+\Log}_{0}(\Omega)$ is a \emph{weak solution} of
		\eqref{eq:poisson} if
		\begin{equation}\label{eq:weak-formulation}
			\mathcal E_{s+\Log}(u,\varphi)+\int_{\Omega} V(x)\,u(x)\,\varphi(x)\,dx
			=\langle f,\varphi\rangle
			\qquad\text{for every }\varphi\in \mathcal H^{s+\Log}_{0}(\Omega),
		\end{equation}
		where $\langle\cdot,\cdot\rangle$ denotes the duality pairing between
		$\big(\mathcal H^{s+\Log}_{0}(\Omega)\big)^{\!*}$ and
		$\mathcal H^{s+\Log}_{0}(\Omega)$, and $\mathcal E_{s+\Log}$ is defined in (\ref{enerln}).
	\end{definition}

	\begin{lemma}\label{gjxlem}
		For every $\alpha>1,r>0$ and $u\in \mathcal H^{s+\Log}_{0}(\Omega)$, there exists $C>0$ independent of $\alpha$ such that 
		\begin{equation}\label{esta}
			\int_{\Omega}\int_{\R^n \setminus B_r(x)}|u|^{2\alpha-1}(x)\left|u(x)-u(y)\right|\left|{\bf K}_{s+\Log}(|x-y|)\right| dydx\le C ||u||_{L^{2\alpha}\left(\Omega\right)}^{2\alpha}.
		\end{equation}
	\end{lemma}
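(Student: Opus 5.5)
The argument rests on the kernel ${\bf K}_{s+\Log}$ being absolutely integrable away from the origin. I will bound $|u(x)-u(y)|\le |u(x)|+|u(y)|$ and treat the two resulting terms separately. The constant $C$ will depend on $n,s,r$, and on $|\Omega|$ or $\operatorname{diam}(\Omega)$, but not on $\alpha$.

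The first step is to show that
\[
M_r:=\int_{\R^n\setminus B_r(0)}|{\bf K}_{s+\Log}(|z|)|\,dz
= c_{n,s}|\mathbb S^{n-1}|\int_r^\infty |b_{n,s}-2\ln\rho|\,\rho^{-1-2s}\,d\rho<\infty .
\]
This follows from the elementary facts that $\rho^{-1-2s}$ and $\rho^{-1-2s}|\ln\rho|$ are integrable on $(r,\infty)$ for $s>0$, exactly as in the radial computations in the proofs of Proposition~\ref{embedding-1}(i) and Lemma~\ref{lem:Es-controlled}.

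The term with $|u(x)|$ is then immediate:
\[
\int_\Omega |u(x)|^{2\alpha}\int_{\R^n\setminus B_r(x)}|{\bf K}_{s+\Log}(|x-y|)|\,dy\,dx\le M_r\|u\|_{L^{2\alpha}(\Omega)}^{2\alpha}.
\]

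The cross term is
\[
\int_\Omega\int_{\R^n\setminus B_r(x)} |u(x)|^{2\alpha-1}|u(y)|\,|{\bf K}_{s+\Log}(|x-y|)|\,dy\,dx .
\]
Since $u=0$ outside $\Omega$, only $y\in\Omega$ contributes. On the region $|x-y|\ge r$ the kernel is bounded by
\[
L_r:=\sup_{\rho\ge r}c_{n,s}\,|b_{n,s}-2\ln\rho|\,\rho^{-n-2s}<\infty,
\]
because $\rho^{-n-2s}\ln\rho\to0$ as $\rho\to\infty$. Hence the cross term is at most
\[
L_r\,\|u\|_{L^{2\alpha-1}(\Omega)}^{2\alpha-1}\,\|u\|_{L^1(\Omega)} .
\]

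The main step is to bound this by $\|u\|_{L^{2\alpha}}^{2\alpha}$ with a constant that stays uniform as $\alpha$ grows. Hölder's inequality on the bounded set $\Omega$ gives
\[
\|u\|_{L^{2\alpha-1}}^{2\alpha-1}\le |\Omega|^{\frac{1}{2\alpha}}\|u\|_{L^{2\alpha}}^{2\alpha-1},
\qquad
\|u\|_{L^1}\le|\Omega|^{1-\frac1{2\alpha}}\|u\|_{L^{2\alpha}} .
\]
The product of the two volume factors is exactly $|\Omega|$, independently of $\alpha$. So the cross term is at most $L_r|\Omega|\,\|u\|_{L^{2\alpha}}^{2\alpha}$.

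A cleaner alternative gives the same result. Apply Young's inequality in the form
\[
|u(x)|^{2\alpha-1}|u(y)|\le\frac{2\alpha-1}{2\alpha}|u(x)|^{2\alpha}+\frac1{2\alpha}|u(y)|^{2\alpha},
\]
and then use the symmetry of the kernel in $(x,y)$ together with $M_r$. This yields the bound $M_r\|u\|_{2\alpha}^{2\alpha}$ with no dependence on $|\Omega|$ at all. I would adopt this Young-plus-symmetry route, since it avoids the $L^\infty$ bound on the kernel entirely.

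Combining the two terms, the lemma holds with $C=2M_r$, which is independent of $\alpha$. If $u\notin L^{2\alpha}(\Omega)$, the right-hand side is infinite and there is nothing to prove.
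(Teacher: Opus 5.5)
Your proof is correct, and your preferred treatment of the cross term differs from the paper's.

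Like the paper, you split $|u(x)-u(y)|\le|u(x)|+|u(y)|$ and bound the diagonal term by the $L^1$-norm of the kernel tail. The paper handles the cross term pointwise in $x$ by Cauchy--Schwarz in $y$, using $\bigl(\int_{\R^n\setminus B_r(x)}|{\bf K}_{s+\Log}(|x-y|)|^2\,dy\bigr)^{1/2}<\infty$ together with $\|u\|_{L^2(\Omega)}$. It then applies H\"older to $\|u\|_{L^2(\Omega)}$ and to $\int_\Omega|u|^{2\alpha-1}dx$, and the volume factors combine to $|\Omega|^{1/2}$ independently of $\alpha$. Your first variant is the same H\"older mechanism, with the sup-bound $L_r$ of the kernel replacing its $L^2$-norm, so you get $|\Omega|$ instead of $|\Omega|^{1/2}$.

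Your Young-plus-symmetry route is genuinely different. You split $|u(x)|^{2\alpha-1}|u(y)|$ with the conjugate exponents $\frac{2\alpha}{2\alpha-1}$ and $2\alpha$, then use Tonelli, the symmetry of $|{\bf K}_{s+\Log}(|x-y|)|$, and $u=0$ off $\Omega$. This needs only the $L^1$-integrability of the kernel tail and gives $C=2M_r$, depending on $n,s,r$ alone and not on $|\Omega|$. The paper's argument avoids exchanging the order of integration, but it uses both the finite measure of $\Omega$ and the extra $L^2$-integrability of the tail.

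One small point in your favour: you keep $|{\bf K}_{s+\Log}|$ throughout, which is necessary because ${\bf K}_{s+\Log}(\rho)<0$ for $\rho>e^{b_{n,s}/2}$. The paper writes its diagonal-term bound as $\int_r^\infty\rho^{n-1}{\bf K}_{s+\Log}(\rho)\,d\rho$, without the absolute value and without the factor $|\mathbb S^{n-1}|$.
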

	\begin{proof}
		Note that
		{\footnotesize\begin{align*}
			&	\int_{\Omega}\int_{\R^n \setminus B_r(x)}|u|^{2\alpha-1}(x)\left|u(x)-u(y)\right|\left|{\bf K}_{s+\Log}(|x-y|)\right| dydx\\\le& 	\int_{\Omega}\int_{\R^n \setminus B_r(x)}|u|^{2\alpha-1}(x)\left(\left|u(x)\right|+\left|u(y)\right|\right)\left|{\bf K}_{s+\Log}(|x-y|)\right| dydx\\=&\int_{\Omega}\int_{\R^n \setminus B_r(x)}u^{2\alpha}(x){\bf K}_{s+\Log}(|x-y|) dydx+\int_{\Omega}\int_{\R^n \setminus B_r(x)}|u^{2\alpha-1}(x)||u(y)|{\bf K}_{s+\Log}(|x-y|) dydx\\\le&
			||u||_{L^{2\alpha}\left(\Omega\right)}^{2\alpha}\cdot \int_r^{\infty}\rho^{n-1}{\bf K}_{s+\Log}(\rho)d\rho+||u||_{L^{2}\left(\Omega\right)} \int_{\Omega}|u^{2\alpha-1}(x)|dx\cdot \left\{\int_{\R^n \setminus B_r(x)}\left|{\bf K}_{s+\Log}(|x-y|)\right|^2 dy\right\}^{\frac{1}{2}}.
		\end{align*}}
		By the property of ${\bf K}_{s+\Log}$, we can obtain that
		\[\int_r^{\infty}\rho^{n-1}{\bf K}_{s+\Log}(\rho)d\rho<\infty,\quad \int_{\R^n \setminus B_r(x)}\left|{\bf K}_{s+\Log}(|x-y|)\right|^{2} dy<\infty.\]
		combining the fact 
		\[||u||_{L^{2}\left(\Omega\right)}\le ||u||_{L^{2\alpha}\left(\Omega\right)} |\Omega|^{\frac{1}{2}-\frac{1}{2\alpha}};\int_{\Omega}|u^{2\alpha-1}(x)|dx\le  ||u||_{L^{2\alpha}\left(\Omega\right)}^{2\alpha-1} |\Omega|^{\frac{1}{2\alpha}} \]
		thus, we obtain that (\ref{esta}) holds.
	\end{proof}

We now use the Lax--Milgram theorem to prove existence (and uniqueness) of weak solutions, and then apply a Moser iteration to establish \(L^\infty\)-regularity.

\begin{proof}[\bf Proof of Theorem \ref{thm:LM-general-b-refined}]

     (i)	Define the bilinear form
		\[
		a(u,\varphi):=\cE_{s+\Log}(u,\varphi)+\int_\Omega V(x)\,u(x)\,\varphi(x)\,dx,
		\quad u,\varphi\in \mathcal H^{s+\Log}_0(\Omega),
		\]
		and the linear functional $\ell(\varphi):=\langle f,\varphi\rangle$.
		By Cauchy--Schwarz inequality, the boundedness of $w$, and the estimates in
		Proposition~\ref{embedding} (i),
		$a$ is continuous on $\mathcal H^{s+\Log}_0(\Omega)$ and $\ell$ is continuous on
		$\mathcal H^{s+\Log}_0(\Omega)$.
		
		Next we prove coercivity. For $u\in \mathcal H^{s+\Log}_0(\Omega)$ we write
		\begin{align*}
			a(u,u)
			&=[u]_{s+\Log,+}^2-\cE_-(u,u)+\frac{b_{n,s}c_{n,s}}{2}\,\cE_s(u,u)
			+\int_\Omega V\,u^2\,dx\\
			&\ge [u]_{s+\Log,+}^2-\cE_-(u,u)-\frac{|b_{n,s}|c_{n,s}}{2}\,\cE_s(u,u)
			+\big(\operatorname{essinf}_\Omega V\big)\|u\|_{L^2(\Omega)}^2.
		\end{align*}
		Using Proposition~\ref{embedding-1} (i) we have
		\[
		\cE_-(u,u)\le \frac{c_{n,s}|\mathbb S^{n-1}|}{s^2}\,\|u\|_{L^2(\Omega)}^2,
		\]
		and by Lemma~\ref{lem:Es-controlled} together with
		$\|u\|_{\mathcal H^{s+\Log}_0(\Omega)}=[u]_{s+\Log,+}$ we obtain
		\[
		\cE_s(u,u)
		\le \frac{1}{c_{n,s}\ln(1/r)}\,\|u\|_{\mathcal H^{s+\Log}_0(\Omega)}^2
		+\frac{2}{s}|\mathbb S^{n-1}|\,r^{-2s}\,\|u\|_{L^2(\Omega)}^2.
		\]
		Plugging these estimates into the previous inequality yields
		\begin{align*}
			a(u,u)
			&\ge
			\Bigl(1-\frac{|b_{n,s}|}{2\,\ln(1/r)}\Bigr)\|u\|_{\mathcal H^{s+\Log}_0(\Omega)}^2+\\
			&\quad \Bigl(\operatorname{essinf}_\Omega V
			-\frac{c_{n,s}|\mathbb S^{n-1}|}{s^2}
			-\frac{|b_{n,s}|c_{n,s}}{s}\,|\mathbb S^{n-1}|\,r^{-2s}\Bigr)\|u\|_{L^2(\Omega)}^2\\
			&=
			\alpha_r\,\|u\|_{\mathcal H^{s+\Log}_0(\Omega)}^2
			+\Bigl(\operatorname{essinf}_\Omega V
			-\frac{c_{n,s}|\mathbb S^{n-1}|}{s^2}
			-\frac{|b_{n,s}|c_{n,s}}{s}\,|\mathbb S^{n-1}|\,r^{-2s}\Bigr)\|u\|_{L^2(\Omega)}^2.
		\end{align*}
		By assumption \eqref{eq:w-cond-refined}, 
		\[
		a(u,u)\ge \alpha_r
		\|u\|_{\mathcal H^{s+\Log}_0(\Omega)}^2.
		\]
		Therefore, the Lax--Milgram theorem yields a unique $u\in \mathcal H^{s+\Log}_0(\Omega)$ such that
		$a(u,\varphi)=\ell(\varphi)$ for all $\varphi\in \mathcal H^{s+\Log}_0(\Omega)$,
		which is exactly the weak formulation of \eqref{eq:poisson}.
		The priori bound follows from the standard Lax--Milgram estimate.

	(ii)	Without loss of generality, we only consider $u$ is non-negative. For $\alpha>1$ and $R>0$ large, we take 
		\[
		\varphi_R(x)=
		\begin{cases}
			x^\alpha, & \text{if } 0 \le x< R,\\[6pt]
			\alpha R^{\alpha-1}x - (\alpha-1)R^\alpha, & \text{if } x \ge R.
		\end{cases}
		\]
		It is esay to see that $\varphi_{R}$ is convex and Lipschitz function with $\varphi_{R}(0)=0.$ Thus $\varphi_{R}\left(u\right) \in \mathcal H^{s+\Log}_{0}(\Omega)$ since  $u\in \mathcal H^{s+\Log}_{0}(\Omega).$ By Proposition \ref{embedding}, there exists $c_1,c_2>0$ satisfying
		\begin{align*}
			c_1||\varphi_{R}\left(u\right)||_{L^{\frac{2n}{n-2s}}(\Omega)}^2&\le||\varphi_{R}\left(u\right)||_{\mathcal H^{s+\Log}_{0}(\Omega)}^2 \\&\le c_2\int_{\R^n}\int_{\R^n}\left|\varphi_{R}\left(u\right)(x)-\varphi_{R}\left(u\right)(y)\right|^2\left|{\bf K}_{s+\Log}(x-y)\right|dxdy\\&=2  \int_{\Omega} \varphi_{R}\left(u\right)(x) \left|\left(-\Delta\right)^{s+\Log}\right| \varphi_{R}\left(u\right)(x)dx
		\end{align*}
		where 
		\[\left|\left(-\Delta\right)^{s+\Log}\right|u(x):=\int_{\R^n}\left(u(x)-u(y)\right)\left|{\bf K}_{s+\Log}(x-y)\right|dy.\]
		Note that there exists a constant $\eta>0$ such that
		\[
		|{\bf K}_{s+\Log}(r)| = {\bf K}_{s+\Log}(r)\quad \text{for all } r\in(0,\eta),
		\]
		so by the convexity of $\varphi_R$,
		\begin{align*}
			&\left|\left(-\Delta\right)^{s+\Log}\right| \varphi_{R}\left(u\right)(x)\le \left|\varphi_{R}^{\prime}(u)(x)\right|	\left|\left(-\Delta\right)^{s+\Log}\right| u(x)\\\le&\left|\varphi_{R}^{\prime}(u)(x)\right| \left(-\Delta\right)^{s+\Log} u(x)+2\left|\varphi_{R}^{\prime}(u)(x)\right|\int_{B_{\eta}\left(x\right)^c}\left|u(x)-u(y)\right|\left|{\bf K}_{s+\Log}(|x-y|)\right|dy
		\end{align*}
		By Lemma \ref{gjxlem} and the fact $\varphi_{R}(x)\le x^{\alpha}, \varphi_{R}^{\prime}\left(x\right)\le \alpha x^{\alpha-1},$ there exists $c_3>0$ such that
		\begin{align*}
			& \frac{4}{c_1}\int_{\Omega}\int_{B_{\eta}\left(x\right)^c}\varphi_{R}\left(u\right)(x)\left|\varphi_{R}^{\prime}(u)(x)\right|\left|u(x)-u(y)\right|\left|{\bf K}_{s+\Log}(|x-y|)\right| dydx\le c_{3}\alpha||u||_{L^{2\alpha}\left(\Omega\right)}^{2\alpha},
		\end{align*}
		thus, there exists $c_4>0$ such that
		\begin{align*}
||\varphi_{R}\left(u\right)||_{L^{\frac{2n}{n-2s}}(\Omega)}^2\le& \frac{2}{c_1} \int_{\Omega} \varphi_{R}\left(u\right)(x) \left|\varphi_{R}^{\prime}(u)(x)\right|(-\Delta)^{s+\Log}u(x)dx+c_{3}\alpha ||u||_{L^{2\alpha}\left(\Omega\right)}^{2\alpha}\\=& \frac{2}{c_1} \int_{\Omega} \varphi_{R}\left(u\right)(x) \left|\varphi_{R}^{\prime}(u)(x)\right|(f(x)-V(x)u(x))dx+c_{3}\alpha ||u||_{L^{2\alpha}\left(\Omega\right)}^{2\alpha}\\ \le &  \frac{2}{c_1}||f||_{L^{q}\left(\Omega\right)}|| \varphi_{R}\left(u\right) \varphi_{R}^{\prime}(u)||_{L^{\frac{q}{q-1}}\left(\Omega\right)}+c_{4}\alpha ||u||_{L^{2\alpha}\left(\Omega\right)}^{2\alpha}\\ \le &  \frac{2}{c_1}||f||_{L^{q}\left(\Omega\right)}|| \alpha u^{2\alpha-1} ||_{L^{\frac{q}{q-1}}\left(\Omega\right)}+c_{4}\alpha ||u||_{L^{2\alpha}\left(\Omega\right)}^{2\alpha}.
		\end{align*}
		Take $p=\frac{2n}{n-2s}$ and $k=\frac{2q}{q-1}$, then $p>k>2$. Let $R\rightarrow \infty$, we obtain that
		\[\left(\int_{\Omega}u^{\alpha p}(x)dx\right)^{\frac{1}{\alpha p}}\le \left(\frac{2}{c_1}\alpha||f||_{L^q\left(\Omega\right)}\right)^{\frac{1}{2\alpha}}\left(c_2+\int_{\Omega}u^{k\alpha}dx\right)^{\frac{1}{k\alpha}}+\left(c_{4}\alpha\right)^{\frac{1}{2\alpha}}||u||_{L^{2\alpha}\left(\Omega\right)}.\]
		where $c_2>\max\left\{1,|\Omega\right\}.$
		Note that
		\[||u||_{L^{2\alpha}\left(\Omega\right)} \le ||u||_{L^{k\alpha}\left(\Omega\right)}|\Omega|^{\frac{1}{\alpha}\left(\frac{1}{2}-\frac{1}{k}\right)}\le \left(c_2+\int_{\Omega}u^{k\alpha}dx\right)^{\frac{1}{k\alpha}}c_2^{\frac{1}{2\alpha}},\]
		thus,
		\[\left(\int_{\Omega}u^{\alpha p}(x)dx\right)^{\frac{1}{\alpha p}}\le C_{\alpha}^{\frac{1}{2\alpha}}\left(c_2+\int_{\Omega}u^{k\alpha}dx\right)^{\frac{1}{k\alpha}},\]
		where 
		$$C_{\alpha}=\max\left\{\frac{2}{c_1}\alpha||f||_{L^q\left(\Omega\right)},c_{2}c_{4}\alpha\right\}.$$
		
		Next, we proceed by employing a standard iteration argument. Define  $$\alpha_0=1,\alpha_{i}=\left(\frac{p}{k}\right)^{i}>1,$$
		then $\alpha_{i+1}k=\alpha_{i}p$. Denote 
		\[A_{i}=\left(\int_{\Omega}u^{\alpha_{i} p}(x)dx\right)^{\frac{1}{\alpha_{i} p}},\ C_{i}=C_{\alpha_i}^{\frac{1}{2\alpha_i}}.\]
		Thus, we obtain that 
		\[A_{i+1}\le C_{i+1}\left(c_2+A_i^{\alpha_{i}p}\right)^{\frac{1}{\alpha_{i}p}}.\]
		Without loss of generality, we suppose $A_0=1$ and $|\Omega|=1,$  then $A_i\ge 1,\forall \ i\ge 1.$ Taking logarithms and 
		\[\log A_{i+1}\le \log C_{i+1}+\frac{1}{\alpha_{i}p}\log \left(c_2+A_{i}^{\alpha_{i}p}\right)\le \log C_{i+1}+\frac{c_2}{\alpha_{i}p}+\log A_{i}.\]
		Thus, we have
		\[\log A_{i+1}\le \sum_{j=1}^{i+1}\log C_{j}+ \sum_{j=0}^{i}\frac{c_2}{\alpha_{j}p},\]
		by the fact $p>k>2$ and 
		\[ \sum_{j=1}^{i+1}\log C_{j}=\sum_{j=1}^{i+1}\frac{1}{2\alpha_j}\log C_{\alpha_j}<\infty,\]
		thus, there exists a constant $K>0$ such that 
        \[\|u\|_{L^q}\le K,\quad 1\le q<\infty.
        \]
        Therefore, we obtain $u\in L^{\infty}(\Omega).$
	\end{proof}
	
	\setcounter{equation}{0}
	\section{The Dirichlet eigenvalue problem}
    In this section, we investigate the Dirichlet eigenvalue problem associated with the fractional-logarithmic operator. We establish the variational framework and derive the min--max characterization of eigenvalues, together with basic spectral properties such as orthogonality and completeness of eigenfunctions, and divergence of the eigenvalue sequence. We then turn to high-frequency behavior and prove a Weyl-type asymptotic law for the counting function and for the \(k\)-th eigenvalue.\medskip

We first establish the basic properties of the Dirichlet eigenvalues.

\begin{proof}[\bf Proof of Theorem \ref{thm:eigen-spectrum}.]

	By Proposition \ref{embedding-1} and Proposition \ref{embedding}, one easily verifies that the functional 
	$\Phi_{s+\Log}: \cH^{s+\Log}_0(\Omega) \to \R$ defined by 
    \begin{displaymath} 
        \Phi_{s+\Log}(u):=\cE_{s+\Log}(u,u)\qquad\text{for every $u\in \cH^{s+\Log}_0(\Omega)$}    
    \end{displaymath}
	is weakly lower semicontinuous (in fact, it is even of the class $C^1$). Thus, the remaining proof of Theorem \ref{thm:eigen-spectrum} is similar to the one of \cite[Theorem 1.4]{CW18}.  For the convenience of the readers, we provide here the details.\medskip 
	
(i) Let  
    \begin{displaymath}
        \cM_1:= \{u\in \cH^{s+\Log}_0(\Omega),\, \norm{u}_{L^2(\Omega)}=1\}.
    \end{displaymath}
    Then, we note that
	\begin{equation}
		\label{eq:relative-functional-est}
		\cE_{s+\Log}(u,u)\ge\|u\|_{\cH^{s+\Log}_0(\Omega) }	 -C_s\left(1+	\|u\|_{\cH^{s+\Log}_0(\Omega) }	\right)  > -\infty
	\end{equation}
	for every $u \in \cM_1$. Thus, and by the lower semicontinuity of $\Phi_{s+\Log}$, the first eigenvalue 
    \begin{displaymath}
        \lambda_1^{s+\Log}(\Omega):= \inf_{\cM_1} \Phi_{s+\Log}
    \end{displaymath}
	   is attained by a function $\xi_1 \in \cM_1$. Consequently, there exists a Lagrange multiplier $\lambda \in \R$ such that
	$$
	\cE_{s+\Log}(\xi_1, \varphi) = \lambda \int_\Omega \xi_1 \varphi\, dx \qquad \text{for all $\varphi\in \cH^{s+\Log}_0(\Omega)$.}
	$$
	Choosing $\varphi = \xi_1$ yields $\lambda= \lambda_1^{s+\Log}(\Omega)$, hence $\xi_1$ is an eigenfunction of (\ref{eq:eigen}) corresponding to $\lambda_1^{s+\Log}(\Omega)$. 
	
	Next we proceed inductively and assume that $\xi_2,\dots,\xi_k \in \cH^{s+\Log}_0(\Omega)$ and $$\lambda_2^{s+\Log}(\Omega) \le \dots \le \lambda_k^{s+\Log}(\Omega)$$ are already given for some $k \in \N$ with the properties that for $i=2,\dots,k$, the function $\xi_i$ is a minimizer
	of $\cE_{s+\Log}(\cdot,\cdot)$ within the set
	\begin{align*}
		\cM_i&:= \big\{u\in \cH_i(\Omega)\::\: \norm{u}_{L^2(\Omega)}=1\big\} 
		\\[1mm]&= \Big\{u\in \cH^{s+\Log}_0(\Omega)\::\: \norm{u}_{L^2(\Omega)}=1,\: \text{$\int_{\Omega} u \xi_j \,dx =0$ for $j=1,\dots i-1$} \Big\}.
	\end{align*}
    Moreover,
	$$\lambda_i^{s+\Log}(\Omega)= \inf_{u\in \cM_i} \Phi_{s+\Log}(u) = \Phi_{s+\Log}(\xi_i),$$
	and
	\begin{equation}
		\label{eq:inductive-eigenvalue}
		\cE_{s+\Log}(\xi_i, \varphi)=\lambda_i^{s+\Log}(\Omega)\int_\Omega \xi_i \varphi\, dx \qquad \text{for all $\varphi\in  \cH^{s+\Log}_0(\Omega)$.}
	\end{equation}
	We then set
	\begin{align*}
		&\cH_{k+1}(\Omega):= \left\{u\in \cH^{s+\Log}_0(\Omega)\::\: \norm{u}_{L^2(\Omega)}=1,\: \text{$\int_{\Omega} u \xi_i \,dx =0$ for $i=1,\dots k$} \right\},
	\end{align*}
    and 
    \[\cM_{k+1}:= \big\{u\in \cH_{k+1}(\Omega),\, \norm{u}_{L^2(\Omega)}=1\big\},\quad\lambda_{k+1}^{s+\Log}(\Omega):= \inf_{u\in \cM_{k+1}}\Phi_{s+\Log}(u) .\]
	By the same weak lower semicontinuity argument as above, the value $\lambda_{k+1}^{s+\Log}(\Omega)$ is attained by a function $\xi_{k+1} \in \cM_{k+1}$. Consequently, there exists a Lagrange multiplier $\lambda \in \R$ with the property that
	\begin{equation}
		\label{eq:inductive-eigenvalue-k+1}
		\cE_{s+\Log}(\xi_{k+1}, \varphi)=\lambda \int_\Omega \xi_{k+1} \varphi\, dx \qquad \text{for all $\varphi \in \cH_{k+1}(\Omega)$.}
	\end{equation}
	Choosing $\varphi = \xi_{k+1}$, it yields $\lambda= \lambda_{k+1}^{s+\Log}(\Omega)$. Moreover, for $i=1,\dots,k$, we have, by (\ref{eq:inductive-eigenvalue}) and the definition of $\cH_{k+1}(\Omega)$,
	\begin{align*}
		\cE_{s+\Log}(\xi_{k+1}, \xi_i)&=\cE_{s+\Log}(\xi_i, \xi_{k+1})\\[1mm]&= \lambda_i^{s+\Log}(\Omega) \int_{\Omega} \xi_i \xi_{k+1}\,dx 
		= 0 =\lambda_{k+1}^{s+\Log}(\Omega)\int_{\Omega} \xi_{k+1} \xi_i \,dx.
	\end{align*}
	Hence (\ref{eq:inductive-eigenvalue-k+1}) holds with $\lambda= \lambda_{k+1}^{s+\Log}(\Omega)$ for all $\varphi\in \cH^{s+\Log}_0(\Omega)$. 
	
	Therefore, we have constructed an $L^2$-normalized sequence $(\xi_k)_k$ in $\cH^{s+\Log}_0(\Omega)$ and a nondecreasing sequence $(\lambda_k^{s+\Log}(\Omega))_k$ in $\R$ such that property (i) holds and such that $\xi_k$ is an eigenfunction of (\ref{eq:eigen}) corresponding to $\lambda = \lambda_k^{s+\Log}(\Omega)$.
    
    (ii) Supposing by contradiction that $c:= \lim \limits_{k \to \infty}\lambda_k^{s+\Log}(\Omega) <+\infty$, we deduce that
	$\Phi_{s+\Log}(\xi_k) \le c$ for every $k \in \N$. By Lemma \ref{lem:Es-controlled}, we can obtain that the sequence $(\xi_k)$ is bounded in $\cH^{s+\Log}_0(\Omega)$, and therefore it
	contains a convergent subsequence $(\xi_{k_j})_j$ in $L^2(\Omega)$ by the compactness of embedding.
	This however is impossible since the functions $\{\xi_{k_j}\}_{j \in \N}$ are $L^2$-orthonormal. Hence (ii) is proved.
	
	(iii) We first suppose by contradiction that there exists $v \in\cH^{s+\Log}_0(\Omega)$ with $\|v\|_{L^2(\Omega)} = 1$
	and $\int_{\Omega} v \xi_k dx =0$ for any $k \in \N$.
	Since $\lim \limits_{k\to\infty} \lambda_k^{s+\Log}(\Omega)=+\infty$, there exists an integer $k_0>0$  such that
	$$\Phi_{s+\Log}(v)<\lambda_{k_0}^{s+\Log}(\Omega)=\inf_{\cM_{k_0}}\Phi_{s+\Log}(u),$$
    which by definition of $\cM_{k_0}$ implies that
	$\int_{\Omega} v \xi_k dx \not = 0$ for some $k \in \{1,\dots,k_0-1\}$. Therefore, we conclude that $\cH^{s+\Log}_0(\Omega)$ is contained in the $L^2$-closure of the span of $\{\xi_k\::\: k \in \N\}$. Since $\cH^{s+\Log}_0(\Omega)$ is dense in $L^2(\Omega)$, we conclude that the span of $\{\xi_k\::\: k \in \N\}$ is dense in $L^2(\Omega)$, and hence $\{\xi_k\::\: k \in \N\}$ is an orthonormal basis.
	
	(iv) Let $w \in \cH^{s+\Log}_0(\Omega)$ be a $L^2$-normalized eigenfunction of (\ref{eq:eigen}) corresponding to the eigenvalue $\lambda_1^{s+\Log}(\Omega)$, i.e. we have
	\begin{equation}
		\label{eq:inductive-eigenvalue-k=1}
		\cE_{s+\Log}(w, \varphi)=\lambda_1^{s+\Log}(\Omega) \int_\Omega w \varphi\, dx \qquad \text{for all $w \in \cH^{s+\Log}_0(\Omega)$.}
	\end{equation}
	We show that $w$ does not change sign. Indeed, choosing $\phi= w$ in (\ref{eq:inductive-eigenvalue-k=1}), we see that $w$ is a minimizer of $ \cE_{s+\Log}\big|_{\cM_1}$. On the other hand, we also have $|w| \in \cM_1$ and
	$$
	\cE_{s+\Log}(|w|,|w|)\le \cE_{s+\Log}(w,w) =\lambda_1^{s+\Log}(\Omega). 
	$$
	Hence equality holds by definition of $\lambda_1^{s+\Log}(\Omega)$, and then Proposition~\ref{embedding-1} (iii) implies that $w$ does not change sign. Therefore, we may assume that $\xi_1$ is nonnegative, by Proposition \ref{embedding-1}, we obatin $\lambda_1^{s+\Log}(\Omega)>0.$
   \end{proof}

Next, we introduce the Riesz mean of the shifted operator (see \cite{hormander1968riesz}):
\[
{\rm Tr}\big((-\Delta)^{s+\Log}-\Lambda\big)_-
=\sum_{k\in\N}\big(\lambda_k^{s+\Log}(\Omega)-\Lambda\big)_-,
\qquad \Lambda\in\R.
\]

	We first derive an asymptotic upper bound for the Riesz mean of the Dirichlet spectrum.

	\begin{lemma}\label{lm eigenlimit 1}
		Let $\Omega$ be a bounded domain. Then one has that 
		\begin{equation}\label{upp bound-ei1}
			{\rm Tr}\big((-\Delta)^{s+\Log}-\Lambda\big)_-\le   \frac{|\Omega|}{(2\pi)^{n}} \frac{2|\mathbb{S}^{n-1}|}{n(n+2s)}   (s \Lambda)^{1+\frac{n}{2s}} \big( \ln s\Lambda\big)^{-\frac{n}{2s}}\big(1+o(1)\big)
		\end{equation} 
       as $\Lambda \rightarrow \infty$. 
	\end{lemma}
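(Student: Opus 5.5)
We would follow the Berezin--Li--Yau (Kröger-type) coherent-state argument used in \cite{CW18} for the logarithmic Laplacian. Let $\{\xi_k\}$ be the $L^2$-orthonormal eigenbasis from Theorem~\ref{thm:eigen-spectrum}. For $\xi\in\R^n$ set $e_\xi(x):=(2\pi)^{-n/2}e^{ix\cdot\xi}\mathbf 1_\Omega(x)$. Since the $\xi_k$ form an orthonormal basis of $L^2(\Omega)$, Plancherel gives
\[
\sum_k|\widehat{\xi_k}(\xi)|^2=(2\pi)^{-n}|\Omega|, \qquad \int_{\R^n}|\widehat{\xi_k}(\xi)|^2\,d\xi=1 .
\]

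The first step is to express eigenvalues through the Fourier symbol. By Theorem~\ref{pr 1.1}(iii) and the Fourier form of the energy (cf.\ \eqref{eq:fourier-form}, applied to the full kernel), we have
\[
\lambda_k^{s+\Log}(\Omega)=\cE_{s+\Log}(\xi_k,\xi_k)=\int_{\R^n}\psi(\xi)\,|\widehat{\xi_k}(\xi)|^2\,d\xi,
\qquad \psi(\xi):=|\xi|^{2s}\ln(|\xi|^2),
\]
where the normalisation of $\cE_{s+\Log}$ is matched to the operator as in the definition of eigenfunctions. Since $t\mapsto(t-\Lambda)_-$ is convex and $|\widehat{\xi_k}|^2\,d\xi$ is a probability measure, Jensen's inequality gives
\[
(\lambda_k-\Lambda)_-\le\int(\psi(\xi)-\Lambda)_-\,|\widehat{\xi_k}(\xi)|^2\,d\xi .
\]
Summing over $k$ by monotone convergence yields
\[
{\rm Tr}\big((-\Delta)^{s+\Log}-\Lambda\big)_-\le\frac{|\Omega|}{(2\pi)^n}\int_{\R^n}(\Lambda-\psi(\xi))_+\,d\xi .
\]
The region where $\psi<0$, namely $|\xi|<1$, contributes only $O(\Lambda)$, which is negligible.

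The second step is the asymptotic evaluation. Writing $\rho=|\xi|$ and $g(\rho)=2\rho^{2s}\ln\rho$, the function $g$ is increasing for $\rho\ge1$. Let $R_\Lambda$ solve $g(R_\Lambda)=\Lambda$. Then
\[
R_\Lambda^{2s}=\frac{\Lambda}{2\ln R_\Lambda}, \qquad 2s\ln R_\Lambda=\ln\Lambda-\ln\ln R_\Lambda+O(1)=\ln(s\Lambda)(1+o(1)),
\]
so that $R_\Lambda^{2s}=s\Lambda(\ln s\Lambda)^{-1}(1+o(1))$. The integral equals
\[
|\mathbb S^{n-1}|\int_0^{R_\Lambda}(\Lambda-g(\rho))\rho^{n-1}\,d\rho+O(\Lambda).
\]
Substituting $\rho=R_\Lambda\tau$ and using $\ln(R_\Lambda\tau)=\ln R_\Lambda(1+o(1))$ uniformly for $\tau\in[\delta,1]$, we get $g(R_\Lambda\tau)=\Lambda\tau^{2s}(1+o(1))$. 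The range $\tau<\delta$ is treated crudely and $\delta\to0$ is sent at the end. Hence the integral is
\[
|\mathbb S^{n-1}|\,\Lambda R_\Lambda^n\int_0^1(1-\tau^{2s})\tau^{n-1}\,d\tau\,(1+o(1))=|\mathbb S^{n-1}|\,\Lambda R_\Lambda^n\frac{2s}{n(n+2s)}\,(1+o(1)).
\]
Inserting $R_\Lambda^n=(s\Lambda)^{n/(2s)}(\ln s\Lambda)^{-n/(2s)}(1+o(1))$ gives
\[
\frac{2|\mathbb S^{n-1}|}{n(n+2s)}(s\Lambda)^{1+\frac n{2s}}(\ln s\Lambda)^{-\frac n{2s}}(1+o(1)),
\]
which is \eqref{upp bound-ei1}.

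The main obstacle is the first step. We must justify that $\cE_{s+\Log}(\xi_k,\xi_k)$ equals the symbol integral for functions in $\cH^{s+\Log}_0(\Omega)$, including the sign-changing kernel part and the convergence of the integral at $\xi=0$, and that the $2$ versus $1$ normalisation between $\cE_{s+\Log}$ and the symbol is consistent. We would first prove the identity for $C_c^\infty(\Omega)$ via Theorem~\ref{pr 1.1}(iii) and Plancherel, then extend it by density using Proposition~\ref{prop:Hslog-vs-form}, the bound \eqref{eq:m-upper}, and the fact that $|\psi|$ is bounded near $0$.
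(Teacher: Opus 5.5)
Your proposal is correct and follows the paper's route. Both arguments use a Berezin--Li--Yau bound ${\rm Tr}\big((-\Delta)^{s+\Log}-\Lambda\big)_-\le (2\pi)^{-n}|\Omega|\int_{\R^n}\big(\Lambda-|\xi|^{2s}\ln|\xi|^2\big)_+\,d\xi$, followed by the asymptotics of this phase-space integral. The paper simply cites \cite[Theorem 2]{Ge} for the bound, whereas you derive it via Jensen; your derivation visibly needs only that the symbol is bounded below, so its negative part near $\xi=0$ causes no trouble. The paper also integrates exactly over $B_{r_\Lambda}$ in polar coordinates rather than rescaling.

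For the identity you flag as the main obstacle, you can skip the density argument altogether. Density of $C_c^\infty(\Omega)$ in $\cH^{s+\Log}_0(\Omega)$ is never proved in the paper, and \eqref{eq:m-upper} bounds the multiplier from the wrong side for a continuity argument. Instead, apply Tonelli and Plancherel separately to the nonnegative kernels of $\cE_+$, $\cE_-$ and $\cE_s$; all three are finite by Proposition~\ref{embedding-1}(i) and Lemma~\ref{lem:Es-controlled}. Combined with the multiplier computation in the proof of Theorem~\ref{pr 1.1}(iii), this gives $\cE_{s+\Log}(u,u)=\int_{\R^n}|\xi|^{2s}\ln(|\xi|^2)\,|\widehat u(\xi)|^2\,d\xi$ for every $u\in\cH^{s+\Log}_0(\Omega)$. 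The normalisation is automatically consistent, because $\cE_{s+\Log}(u,u)=\frac12\iint_{\R^n\times\R^n}(u(x)-u(y))^2\,{\bf K}_{s+\Log}(|x-y|)\,dx\,dy$.
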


    \begin{proof}
    It follows by \cite[Theorem 2]{Ge} that 
	$${\rm Tr}\big((-\Delta)^{s+\Log}-\Lambda\big)_-\leq \frac{|\Omega|}{(2\pi)^{n}} \int_{\R^n} \big(|\xi|^{2s}\ln|\xi|^2 -\Lambda\big)_-d\xi.  
	$$
	By the theory of Lambert $W$ function, a direct computation shows that for given $\Lambda>0$,  
	$$r_\Lambda^{2s}\ln r_\Lambda^2 =\Lambda \qquad \Longrightarrow \qquad r_\Lambda\leq (s \Lambda)^{\frac1{2s}} 
	\big(\ln \Lambda\big)^{-\frac1{2s}} \Big(1+\frac{1}{2s}\frac{\ln \ln \Lambda}{\ln \Lambda}\big(1+o(1)\big)\Big), $$
	which implies that  for sufficiently large $\Lambda$
	\begin{align*}
		\int_{\R^n} \big(|\xi|^{2s}\ln|\xi|^2 -\Lambda\big)_-d\xi&= \int_{B_{r_\Lambda}} \big(\Lambda-|\xi|^{2s} \ln|\xi|^2 \big)d\xi 
		\\[2mm]&=|\mathbb{S}^{n-1}|\Big(\frac1n \Lambda r_\Lambda^{n}   - \frac2{n+2s} r_\Lambda^{2s+n} \big(\ln r_\Lambda-\frac1{n+2s}\big) \Big)	\\[2mm]&=|\mathbb{S}^{n-1}|\left(\frac{2s \Lambda r_{\Lambda}^n}{n\left(n+2s\right)}+\frac{2}{\left(n+2s\right)^2}r_{\Lambda}^{2s+n}\right)
		\\[2mm]&=  \frac{2|\mathbb{S}^{n-1}|}{n(n+2s)}   (s \Lambda)^{1+\frac{n}{2s}} \big( \ln \Lambda\big)^{-\frac{n}{2s}}\big(1+o(1)\big).
	\end{align*}
	The proof ends. 
\end{proof}

With this lemma at hand, we can now complete the proof.

\begin{proof}[\bf Proof of Theorem \ref{thm:weyl-fraclog}.]

 Let $g\in C_c^\infty(\R^n)$ be a real-valued, $L^2$-normalized function with support in 
	$\{x\in \R^n:|x|\leq \frac{\delta}2\}$. For $p\in\R^n$ and $q\in \Omega_\delta$, the coherent states 
	$$F_{p,q}(x)=e^{{\rm i} p\cdot x } g(x-q), $$
	where $\Omega_\delta=\{x\in\Omega:\,  {\rm dist}(x,\R^n\setminus\Omega)>\delta  \}$.

	From \cite[Theorem 12.8]{LL}, there holds
	\begin{align*}
		{\rm Tr}((-\Delta)^{s+\Log}-\Lambda )_-& \geq (2\pi)^{-n} \int\int_{\R^n\times \Omega_\delta} \big\langle F_{p,q}, \big((-\Delta)^{s+\Log}-\Lambda\big)_-  F_{p,q} \big\rangle \,dp dq
		\\[1mm]&\geq (2\pi)^{-n} \int\int_{\R^n\times \Omega_\delta} \Big(\big\langle F_{p,q}, (-\Delta)^{s+\Log}  F_{p,q} \big\rangle -\Lambda\Big)_-\, dp dq. 
	\end{align*}
	Similar to the proof of \cite[(8)]{Ge},  there exists $C>0$ such that 
	$$\langle F_{p,q}, (-\Delta)^{s+\Log}  F_{p,q} \rangle \leq |p|^{2s}\ln|p|^2+C,$$
	thus
	\begin{align*}
		{\rm Tr}((-\Delta)^{s+\Log}-\Lambda )_-\geq&  \frac{|\Omega_\delta|}{(2\pi)^{n}}\int_{\R^n} \Big(|p|^{2s}\ln|p|^2+C-\Lambda\Big)_- dp\\=&\frac{|\Omega_\delta|}{(2\pi)^{n}} \frac{2|\mathbb{S}^{n-1}|}{n(n+2s)}   (s \Lambda)^{1+\frac{n}{2s}} \big( \ln \Lambda\big)^{-\frac{n}{2s}}\big(1+o(1)\big), 
	\end{align*}
	which, together with (\ref{upp bound-ei1}), implies that 
	\begin{align*} 
		\lim_{\Lambda\to+\infty} {\rm Tr}\big((-\Delta)^{s+\Log}-\Lambda\big)_- \Big(\Lambda ^{-1-\frac{n}{2s}} 
		\big( \ln \Lambda\big)^{\frac{n}{2s}}\Big) 
		=  \frac{|\Omega|}{(2\pi)^{n}} \frac{2|\mathbb{S}^{n-1}|}{n(n+2s)} s^{1+\frac{n}{2s}} =:A_s. 
	\end{align*}
	
	Let $\cS_0(\Lambda)= \sum_k(\Lambda-\lambda_k^{s+\Log}(\Omega))_+={\rm Tr}\big((-\Delta)^{s+\Log}-\Lambda\big)_-$, then for any $\kappa\in(0, \frac\Lambda2)$, 
	$$\big(\Lambda+\kappa-\lambda_k^{s+\Log}(\Omega)\big)_+-\big(\Lambda -\lambda_k^{s+\Log}(\Omega)\big)_+\geq \kappa \big(\Lambda -\lambda_k^{s+\Log}(\Omega)\big)_+ ^0 =\kappa \cN(\Lambda)$$
	and
	$$\big(\Lambda-\lambda_k^{s+\Log}(\Omega)\big)_+-\big(\Lambda-\kappa -\lambda_k^{s+\Log}(\Omega)\big)_+\leq \kappa \big(\Lambda  -\lambda_k^{s+\Log}(\Omega)\big)_+ ^0 =\kappa \cN(\Lambda),$$
	that is, 
	\begin{align} \label{bnd--1}
		\cS_0(\Lambda)-\cS_0(\Lambda-\kappa) \leq \kappa \cN(\Lambda)\leq \cS_0(\Lambda+\kappa)-\cS_0(\Lambda). 
	\end{align} 

Note that the function
\[
F(\Lambda):=\Lambda^{1+\frac{n}{2s}}(\ln\Lambda)^{-\frac{n}{2s}},\qquad \Lambda>1,
\]
is increasing and convex on $(\Lambda_1,\infty)$ for some sufficiently large $\Lambda_1$.
Fix $\epsilon\in(0,\tfrac12)$. By the asymptotic relation in \eqref{bnd--1}, there exists
$\Lambda_2>2\Lambda_1$ such that for all $\Lambda\ge \Lambda_2$,
\[
\Big|\big(\Lambda ^{-1-\frac{n}{2s}}(\ln \Lambda)^{\frac{n}{2s}}\big)\cS_0(\Lambda)-A_s\Big|
\le \epsilon,
\]
equivalently,
\[
(A_s-\epsilon)\,F(\Lambda)\le \cS_0(\Lambda)\le (A_s+\epsilon)\,F(\Lambda).
\]
Take $\kappa=\sqrt{\epsilon}\,\Lambda$. Then $\Lambda+\kappa=\Lambda(1+\sqrt{\epsilon})\ge \Lambda_1$,
so that $F$ is increasing and convex on the interval $[\Lambda,\Lambda+\kappa]$.
Applying \eqref{bnd--1} and the above bounds yields
\[
\cN(\Lambda)\le \frac1\kappa\Big\{A_s\big(F(\Lambda+\kappa)-F(\Lambda)\big)
+\epsilon\big(F(\Lambda+\kappa)+F(\Lambda)\big)\Big\}.
\]
Since $F$ is convex, we have for any $\kappa>0$,
\[
\frac{F(\Lambda+\kappa)-F(\Lambda)}{\kappa}\le F'(\Lambda+\kappa),
\]
while the monotonicity of $F$ implies $F(\Lambda)\le F(\Lambda+\kappa)$ and hence
\[
\frac{\epsilon}{\kappa}\big(F(\Lambda+\kappa)+F(\Lambda)\big)
\le \frac{2\epsilon}{\kappa}F(\Lambda+\kappa)
=\frac{2\sqrt{\epsilon}}{\Lambda}\,F(\Lambda+\kappa).
\]
Moreover, a direct computation gives
\[
F'(\Lambda)=\frac{n+2s}{2s}\,\Lambda^{\frac{n}{2s}}(\ln\Lambda)^{-\frac{n}{2s}-1}
\Bigl(\ln\Lambda-\frac{n}{n+2s}\Bigr).
\]
Combining these estimates and using $\Lambda+\kappa=\Lambda(1+\sqrt{\epsilon})$, we obtain
\begin{align*}
\cN(\Lambda)
&\le A_s\,F'(\Lambda+\kappa)+\frac{2\sqrt{\epsilon}}{\Lambda}\,F(\Lambda+\kappa)\\[2mm]
&\le \frac{n+2s}{2s}A_s\,[\Lambda(1+\sqrt{\epsilon})]^{\frac{n}{2s}}
\bigl(\ln[\Lambda(1+\sqrt{\epsilon})]\bigr)^{-\frac{n}{2s}-1}
\Bigl(\ln[\Lambda(1+\sqrt{\epsilon})]-\frac{n}{n+2s}\Bigr)\\[2mm]
&\quad +\frac{2\sqrt{\epsilon}}{\Lambda}\,\{\Lambda(1+\sqrt{\epsilon})\}^{1+\frac{n}{2s}}
\bigl(\ln[\Lambda(1+\sqrt{\epsilon})]\bigr)^{-\frac{n}{2s}}.
\end{align*}
and similarly we derive a lower bound.  With the same notation, we obtain
\[
\cN(\Lambda)\ge \frac1\kappa\Big\{A_s\big(F(\Lambda)-F(\Lambda-\kappa)\big)
-\epsilon\big(F(\Lambda)+F(\Lambda-\kappa)\big)\Big\}.
\]
By convexity of $F$, the derivative is increasing, hence
\[
\frac{F(\Lambda)-F(\Lambda-\kappa)}{\kappa}\ge F'(\Lambda-\kappa).
\]
Moreover, by monotonicity $F(\Lambda-\kappa)\le F(\Lambda)$ and thus
\[
\frac{\epsilon}{\kappa}\big(F(\Lambda)+F(\Lambda-\kappa)\big)
\le \frac{2\epsilon}{\kappa}F(\Lambda)
=2\sqrt{\epsilon}\,\Lambda^{\frac{n}{2s}}(\ln\Lambda)^{-\frac{n}{2s}}.
\]
Using the explicit expression
\[
F'(\Lambda)=\frac{n+2s}{2s}\,\Lambda^{\frac{n}{2s}}(\ln\Lambda)^{-\frac{n}{2s}-1}
\Bigl(\ln\Lambda-\frac{n}{n+2s}\Bigr),
\]
and $\Lambda-\kappa=\Lambda(1-\sqrt{\epsilon})$, we arrive at
\begin{align*}
\cN(\Lambda)
&\ge A_s\,F'(\Lambda-\kappa)-2\sqrt{\epsilon}\,\Lambda^{\frac{n}{2s}}(\ln\Lambda)^{-\frac{n}{2s}}\\[2mm]
&\ge \frac{n+2s}{2s}A_s\,[\Lambda(1-\sqrt{\epsilon})]^{\frac{n}{2s}}
\bigl(\ln[\Lambda(1-\sqrt{\epsilon})]\bigr)^{-\frac{n}{2s}-1}
\Bigl(\ln[\Lambda(1-\sqrt{\epsilon})]-\frac{n}{n+2s}\Bigr)\\[2mm]
&\quad-2\sqrt{\epsilon}\,\Lambda^{\frac{n}{2s}}(\ln\Lambda)^{-\frac{n}{2s}}.
\end{align*}
Dividing the upper and lower bounds by $\Lambda^{\frac{n}{2s}}(\ln\Lambda)^{-\frac{n}{2s}}$
and letting $\Lambda\to\infty$, we obtain
\[
\limsup_{\Lambda\to\infty}\Bigl(\Lambda^{-\frac{n}{2s}}(\ln\Lambda)^{\frac{n}{2s}}\cN(\Lambda)\Bigr)
\le \frac{n+2s}{2s}A_s\,(1+o_\epsilon(1)) + C\sqrt{\epsilon},
\]
and
\[
\liminf_{\Lambda\to\infty}\Bigl(\Lambda^{-\frac{n}{2s}}(\ln\Lambda)^{\frac{n}{2s}}\cN(\Lambda)\Bigr)
\ge \frac{n+2s}{2s}A_s\,(1+o_\epsilon(1)) - C\sqrt{\epsilon},
\]
where $o_\epsilon(1)\to0$ as $\Lambda\to\infty$ for each fixed $\epsilon$, and $C$ is independent
of $\Lambda$ and $\epsilon$.  Since $\epsilon\in(0,\tfrac12)$ is arbitrary, letting first
$\Lambda\to\infty$ and then $\epsilon\downarrow0$ yields
\[\lim_{\Lambda\to+\infty}\Bigl(\Lambda^{-\frac{n}{2s}}(\ln\Lambda)^{\frac{n}{2s}}\cN(\Lambda)\Bigr)
=\frac{n+2s}{2s}A_s
=(2\pi)^{-n}s^{\frac{n}{2s}}\omega_n\,|\Omega|.\]

	Taking $\Lambda=\lambda_k^{s+\Log}(\Omega)$, implies that  
\begin{equation}\label{eq:inv-asymp}
\lim_{k\to\infty} k\,
\big(\lambda_k^{s+\Log}(\Omega)\big)^{-\frac{n}{2s}}
\big(\ln \lambda_k^{s+\Log}(\Omega)\big)^{\frac{n}{2s}}
=(2\pi)^{-n}s^{\frac{n}{2s}}\omega_n\,|\Omega|=:C_0.
\end{equation}

Directly from \eqref{eq:inv-asymp}, for any $\delta\in(0,1)$ there exists $k_\delta$ such that
for all $k\ge k_\delta$,
\[
(1-\delta)C_0
\le
k\,
\big(\lambda_k^{s+\Log}(\Omega)\big)^{-\frac{n}{2s}}
\big(\ln \lambda_k^{s+\Log}(\Omega)\big)^{\frac{n}{2s}}
\le
(1+\delta)C_0.
\]
Equivalently,
\[
\Big(\frac{k}{(1+\delta)C_0}\Big)^{\frac{2s}{n}}
\le
\frac{\lambda_k^{s+\Log}(\Omega)}{\ln \lambda_k^{s+\Log}(\Omega)}
\le
\Big(\frac{k}{(1-\delta)C_0}\Big)^{\frac{2s}{n}}.
\]
In particular, $\lambda_k^{s+\Log}(\Omega)\to\infty$, and the above bounds imply
$\lambda_k^{s+\Log}(\Omega)\sim k^{\frac{2s}{n}}\ln k$, hence
\[
\frac{\ln \lambda_k^{s+\Log}(\Omega)}{\ln k}\longrightarrow 1
\qquad\text{as }k\to\infty.
\]
Therefore, dividing the previous inequality by $\ln k$ and using
$\ln \lambda_k^{s+\Log}(\Omega)\sim \ln k$, we obtain
\[
\lim_{k\to\infty}\frac{\lambda_k^{s+\Log}(\Omega)}{k^{\frac{2s}{n}}\ln k}
=
C_0^{-\frac{2s}{n}}.
\]
Finally, since $C_0=(2\pi)^{-n}s^{\frac{n}{2s}}\omega_n|\Omega|$, we compute
\[
C_0^{-\frac{2s}{n}}
=
\Big((2\pi)^{-n}s^{\frac{n}{2s}}\omega_n|\Omega|\Big)^{-\frac{2s}{n}}
=(2\pi)^{2s}\,s^{-1}\,(\omega_n|\Omega|)^{-\frac{2s}{n}},
\]
and hence
\[
\lim_{k\to\infty}\frac{\lambda_k^{s+\Log}(\Omega)\,k^{-\frac{2s}{n}}}{\ln k}
=
\frac{2s}{n}\,C_0^{-\frac{2s}{n}}
=
\frac{2}{n}\,(2\pi)^{2s}\,(\omega_n|\Omega|)^{-\frac{2s}{n}}.
\]
The proof ends. 
\end{proof}

\section*{Conflict of interest}

On behalf of all authors, the corresponding author states that there is no conflict of interest.










\end{document}